\newtheorem{thm}{Theorem}[section]
\newtheorem{prop}[thm]{Proposition}
\newtheorem{lem}[thm]{Lemma}
\newtheorem{ass}[thm]{Assumption}
\newtheorem{cor}[thm]{Corollary}
\newtheorem*{thmA}{Theorem A}
\newtheorem*{thmB}{Theorem B}
\theoremstyle{definition}
\newtheorem{dfn}[thm]{Definition}
\newtheorem{ex}[thm]{Example}
\def\C{\mathbb{C}}   
\def\R{\mathbb{R}}
\def\0{\emptyset}
 \def\Bc{\mathcal{B}} 
\def\Ec{\mathcal{E}}
 \def\Vc{\mathcal{V}}  
\def\Xc{\mathcal{X}}   
\def\Z{\mathbb{Z}}
\renewcommand\emptyset{\varnothing}
\def\eps{\varepsilon}
\def\ol{\overline}
\def\d{\partial}
\def\ge{\geqslant}
\def\oC{\xymatrix{*+<0.3pc>[o][F]{\C}}}
\def\Im{{\rm Im}}
\def\S{\mathbb{S}}
\def\<{\langle}
\def\>{\rangle}
\def\ivy{\mathrm{Ivy}}
\def\pmcg{\mathrm{PMod}}
\def\push{\mathcal{P}\hbox{\textit{ush}}}
\def\fs{\mathrm{FS}}
\begin{document}

\title[Invariant Spanning Trees]
{Invariant Spanning Trees for Quadratic Rational Maps}

\author[A.~Shepelevtseva]{Anastasia Shepelevtseva}

\author[V.~Timorin]{Vladlen~Timorin$^\star$}

\address[V.~Timorin, A.~Shepelevtseva]
{National Research University Higher School of Economics\\
6 Usacheva St., 119048 Moscow, Russia}

\address[A.~Shepelevtseva]
{Scuola Normale Superiore, 7 Piazza dei Cavalieri, 56126 Pisa, Italy}

\thanks{The study has been funded within the framework of the HSE University Basic Research Program and the Russian Academic Excellence Project '5-100.}

\email[Anastasia~Shepelevtseva]{asyashep@gmail.com}
\email[Vladlen~Timorin]{vtimorin@hse.ru}

\subjclass[2010]{Primary 37F20; Secondary 37F10}

\keywords{Complex dynamics; invariant tree, iterated monodromy group}

\begin{abstract}
The main objects of study are Thurston equivalence classes of quadratic post-critically finite branched coverings.
For these maps, we introduce and study invariant spanning trees.
We give a computational procedure for searching for invariant spanning trees.
This procedure uses bisets over the fundamental group of a punctured sphere.
We also introduce a new combinatorial invariant of Thurston classes --- the ivy graph.
\end{abstract}

\maketitle

\section{Introduction}
Rational maps acting on the Riemann sphere are among central objects in complex dynamics.
\emph{Thurston's characterization theorem} allows to study these algebraic objects by topological tools.
It views rational maps within a much wider class of topological branched coverings.
(Branched self-coverings of the sphere whose critical points have finite orbits are called \emph{Thurston maps}.)
There is a natural equivalence relation on Thurston maps such that different rational functions are almost never equivalent.
(All exceptions are known and well-understood.)
Thurston's theorem provides a topological criterion for a Thurston map being equivalent to a rational function.
Thus, classification of Thurston maps up to equivalence is an important problem.
This fundamental problem has applications beyond complex dynamics, e.g., in group theory;
 it is a focus of recent developments, see e.g. \cite{BN06,BD17,CGNPP15,KL18,H17}.
We approach the problem via analogs of Hubbard trees for quadratic rational maps: \emph{invariant spanning trees}.

We will write $\S^2$ for the oriented topological 2-sphere.
By a \emph{graph} in the sphere, we here mean a 1-dimensional cell complex embedded into $\S^2$.
By \emph{vertices} and \emph{edges}, we mean 0-cells and 1-cells, respectively.
For a graph $G$, we will write $V(G)$ for the set of vertices of $G$ and $E(G)$ for the set of edges of $G$.
A \emph{tree} is a simply connected graph.
A vertex $x$ of a tree $T$ is called a \emph{branch point} if $T-\{a\}$ has more than 2 components.
Suppose that $P\subset\S^2$ is some finite subset.
A tree $T$ in $\S^2$ such that $P\subset V(T)$ is called a \emph{spanning tree} for $P$ if $V(T)-P$ consists of branch points.

Let $f:\S^2\to\S^2$ be an orientation preserving branched covering of degree 2.
The map $f$ has two critical points $c_1(f)$ and $c_2(f)$.
Let $v_1(f)$ and $v_2(f)$ be the corresponding critical values.
The \emph{post-critical set} of $f$ is defined as the smallest closed $f$-stable set including $\{v_1(f),v_2(f)\}$.
The post-critical set of $f$ will be denoted by $P(f)$.
If $P(f)$ is finite, then $f$ is said to be \emph{post-critically finite}.
Recall that a \emph{Thurston map} is a post-critically finite orientation preserving branched covering.
In this paper, we will only consider degree two Thurston maps.

\begin{dfn}[Invariant spanning tree]
Let $f:\S^2\to\S^2$ be a Thurston map.
A spanning tree $T$ for $P(f)$ is called an \emph{invariant spanning tree} for $f$ if:
\begin{enumerate}
 \item we have $f(T)\subset T$;
 \item vertices of $T$ map to vertices of $T$.
\end{enumerate}
\end{dfn}
This notion is close to what is called ``invariant trees'' in \cite{H17}.
Note that the restriction of $f$ to an edge of $T$ is injective unless the edge contains a critical point of $f$.
Consideration of invariant spanning trees is justified by the following examples
(we describe those of them, which are quadratic maps, in more detail later, see Section \ref{s:ex}):
\begin{enumerate}
  \item \emph{Hubbard trees} \cite{hubbdoua85,BBH92,poi93} can be connected to infinity to form invariant spanning trees.
  \item Invariant spanning trees can be constructed for \emph{formal matings} by joining the two Hubbard trees.
  (However, the tree structure sometimes does not survive in the corresponding topological matings).
  \item Classical \emph{captures} in the sense of \cite{Wittner,Rees_description}
    often come with invariant spanning trees.
  In fact, the original approach of Wittner used invariant trees.
  \item Sufficiently high iterates of expanding Thurston maps possess invariant spanning trees by \cite{H17}.
  This result has also been extended in \cite{H17} to rational maps with Sierpinski carpet Julia set.
  \item Extended Newton graphs constructed in \cite{LMS15} for post-critically finite Newton maps are often invariant trees.
  Invariant spanning trees can be obtained from them by erasing some of the vertices.
  \item With each \emph{critically fixed} rational map $f$, a certain bipartite graph is associated in \cite{CGNPP15}.
  Every edge if this graph is invariant, thus every spanning tree of this graph is an invariant tree for $f$.
  Again, an invariant spanning tree can be obtained by erasing some of the vertices.
  \item Some invariant spiders in the sense of \cite{HS94} are invariant spanning trees
  (possibly after removal of the critical leg in case of a strictly preperiodic critical point).
  Note that invariant spiders may fail to be trees.
\end{enumerate}
Section \ref{s:ex} deals with examples of invariant spanning trees.
However, we restrict our attention to the case of degree 2 Thurston maps.

Let $x$ be a vertex of a tree $T\subset\S^2$, and $e$ be an edge of $T$.
If $x$ is in the closure of $e$, then we say that $x$ is \emph{incident} to $e$.
We also say that $e$ is incident to $x$.
The following result shows how to recover the Thurston equivalence class of $f$ from an invariant spanning tree of $f$.
Recall that a \emph{ribbon graph} (also known as a fat graph, or a cyclic graph) is an abstract graph
 in which the edges incident to each particular vertex are cyclically ordered.
By \cite{MA41}, ribbon trees are the same as isomorphism classes of embedded trees in $\S^2$.
Here an isomorphism of embedded trees is an orientation preserving self-homeomorphism of $\S^2$ that takes one tree to another.
Recall that the orientation of $\S^2$ is assumed to be fixed.
For a spanning tree $T$ for $P(f)$, we write $C(T)$ for the set of critical points of $f$ in $T$.

\begin{thmA}
Suppose that $f$, $g:\S^2\to\S^2$ are two Thurston maps of degree 2.
Let $T_f$ and $T_g$ be invariant spanning trees for $f$ and $g$, respectively.
Suppose that there is a cellular homeomorphism $\tau:T_f\to T_g$ with the following properties:
\begin{enumerate}
\item The map $\tau$ is an isomorphism of ribbon graphs.
\item We have $\tau\circ f=g\circ\tau$ on $V(T_f)\cup C(T_f)$.
\item The critical values of $f$ are mapped to critical values of $g$ by $\tau$.
\end{enumerate}
Suppose also that $\tau$ can be extended to edges of $f^{-1}(T_f)$ incident to points in $C(T_f)$ so that
 to preserve the cyclic order of edges incident to a given vertex of $C(T_f)$ and so that to satisfy $(2)$.
Then $f$ and $g$ are Thurston equivalent.
\end{thmA}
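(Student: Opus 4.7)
The plan is to produce orientation-preserving homeomorphisms $H_0,H_1:\S^2\to\S^2$ satisfying $H_0\circ f=g\circ H_1$ and $H_0\simeq H_1$ rel $P(f)$, thereby realizing the Thurston equivalence of $f$ and $g$. For the homeomorphism $H_0$, I would upgrade $\tau$ to a global self-homeomorphism of the sphere: by the MacLane correspondence \cite{MA41}, a cellular isomorphism between embedded trees that preserves the cyclic order of edges at each vertex extends to an orientation-preserving homeomorphism of $\S^2$. Applied to $\tau$, this yields $H_0$ with $H_0|_{T_f}=\tau$; hypothesis (3), together with the fact that $\tau$ is a bijection and that $f,g$ each have exactly two critical values, lets me relabel so that $H_0(v_i(f))=v_i(g)$ for $i=1,2$.

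Next I would construct $H_1$ by lifting through the branched covers. Since the critical values of $H_0\circ f$ coincide with those of $g$ with matching local degrees, standard branched-cover lifting theory supplies a homeomorphism $H_1:\S^2\to\S^2$ with $g\circ H_1=H_0\circ f$; there are exactly two such lifts, differing by the involutive deck transformation $\sigma$ of $g$. Hypothesis (2) gives $g(H_1(v))=H_0(f(v))=\tau(f(v))=g(\tau(v))=g(H_0(v))$ for every $v\in V(T_f)$, so $H_1(v)\in\{\tau(v),\sigma(\tau(v))\}$. I would single out the unique lift satisfying $H_1|_{V(T_f)}=\tau|_{V(T_f)}$: critical points contained in $T_f$ have unique $g$-preimages and so rigidify the lift at those vertices, while on edges avoiding critical points one propagates the choice along $T_f$ using continuity of $H_1$ and the connectedness of $T_f$.

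With the correct lift fixed, I would show $H_1\simeq H_0$ rel $P(f)$. Because $H_1$ is a self-homeomorphism of the oriented sphere, $H_1(T_f)$ is an embedded tree carrying the ribbon structure transported from $T_f$; by hypothesis (1), this structure matches, via $H_1|_{V(T_f)}=\tau$, the ribbon structure of $T_g$ at each vertex of $V(T_g)$. Invoking MacLane's theorem again, $H_1(T_f)$ and $T_g=H_0(T_f)$ are ambient-isotopic rel $V(T_g)$; after composing $H_1$ with such an isotopy, $H_1|_{T_f}=\tau=H_0|_{T_f}$. The complement $\S^2\setminus T_f$ is a finite disjoint union of open topological disks, and on the closure of each face the map $H_0\circ H_1^{-1}$ is a self-homeomorphism restricting to the identity on the boundary; the Alexander trick, applied face by face, assembles into an ambient isotopy from $H_1$ to $H_0$ rel $T_f$, and in particular rel $P(f)$.

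The main obstacle is the selection of the correct lift in the second step. The two possible lifts differ by $\sigma$, so specifying $H_1$ at a single vertex picks one of them; the delicate point is verifying that this local choice forces $H_1=\tau$ on every other vertex of $T_f$. This is precisely why hypothesis (2) is imposed on $V(T_f)\cup C(T_f)$ rather than on $V(T_f)$ alone: critical points contained in $T_f$ serve as anchors for the lift across critical edges, and the invariance $f(T_f)\subset T_f$ lets one trace the lift combinatorially along the tree. Once the correct lift is pinned, the remaining ingredients---MacLane's rigidity for ribbon trees and the Alexander trick on complementary faces---are standard surface topology.
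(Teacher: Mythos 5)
Your lifting approach is a genuinely different route from the paper's. The paper explicitly builds a ribbon-graph isomorphism $\tau_*\colon f^{-1}(T_f)\to g^{-1}(T_g)$ by a cut-and-glue argument (Proposition~\ref{p:ext-finvT}), extends it to the sphere, and then invokes a theorem of Bielefeld--Fisher--Hubbard on graph maps with regular extensions (Theorem~\ref{t:ext-hom}); you instead extend $\tau$ to $H_0$, lift through the branched covers to get $H_1$ with $g\circ H_1=H_0\circ f$, and try to show $H_0\simeq H_1$ rel $P(f)$. The lifting route is more direct and avoids the graph-map machinery of \cite{BBH92}, at the cost of more delicate isotopy bookkeeping at the end.

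That is exactly where the gap lies. You assert that $H_1(T_f)$ and $T_g$ are ambient isotopic rel $V(T_g)$ ``by MacLane's theorem,'' because they share a vertex set and a ribbon structure. MacLane's theorem produces an orientation-preserving \emph{homeomorphism} of $\S^2$ carrying one embedded tree onto the other, not an \emph{isotopy} rel vertices, and a ribbon structure together with a fixed vertex set does not determine the isotopy class of a spanning tree: the correct invariant is the conjugacy class of the generating set $\Ec_T\subset\pi_1(\S^2-P)$ (Proposition~\ref{p:trconj}), which carries strictly more information than the ribbon structure. So this step, as written, does not go through.

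The gap can be closed by pushing your own propagation argument further, which then makes the MacLane isotopy step unnecessary. After pinning the lift by $H_1(v_0)=\tau(v_0)$, subdivide $T_f$ at $C(T_f)$ and propagate outward: for a sub-arc $\gamma$ from $v$ to $w$ with $H_1(v)=\tau(v)$ and no critical point of $f$ in its interior, $H_1(\gamma)$ is the unique $g$-lift, emanating from $\tau(v)$, of the simple arc $\tau(f(\gamma))$ in $T_g$; and $\tau(\gamma)$ is also such a lift, because $g(\tau(\gamma))$ and $\tau(f(\gamma))$ are simple arcs in the tree $T_g$ with the same endpoints and hence coincide. Uniqueness of path lifts gives $H_1(\gamma)=\tau(\gamma)$ as sets, so $H_1(w)=\tau(w)$. (This requires that $\tau(\gamma)$ contain no critical point of $g$ in its interior, i.e.\ that $\tau(C(T_f))=C(T_g)$ --- a point worth verifying explicitly from the hypotheses.) The conclusion is stronger than what you need from MacLane: $H_1(T_f)=T_g$ as sets and $H_1=\tau$ on $V(T_f)\cup C(T_f)$, even though $H_1|_{T_f}$ and $\tau$ may differ by reparametrization inside edges --- the paper makes exactly this observation about $\tau_*$. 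Now $H_0\circ H_1^{-1}$ fixes $T_g$ setwise and $V(T_g)$ pointwise; a small isotopy rel $V(T_g)$ makes it fix $T_g$ pointwise, and the Alexander trick applied to the single disk $\S^2-T_g$ (the complement of a tree in $\S^2$ is one disk, not ``a finite disjoint union'') yields $H_0\simeq H_1$ rel $T_g\supset P(g)$, hence rel $P(f)$, as required.
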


In other words, to know the Thurston equivalence class of $f$, it suffices to know the following data:
\begin{enumerate}
\item the ribbon graph structure of $T_f$;
 \item the restriction of the map $f$ to the set $V(T_f)\cup C(T_f)$;
 \item the cyclic order, in which pullbacks of certain edges of $T_f$ appear around a point of $C(T_f)$.
\end{enumerate}
These data are discrete and can be encoded symbolically.
Theorem A is likely to extend to higher degrees using the notion of an angled tree, cf. \cite{poi93}.

An important algebraic invariant of a Thurston map is its \emph{biset} over the fundamental group of $\S^2-P(f)$.
A biset is a convenient algebraic structure that carries a complete information about the Thurston class of $f$.
The (perhaps better known) \emph{iterated monodromy group} of $f$ can be immediately recovered from the biset.
A formal definition of a biset will be given in Section \ref{s:img}.
For now, we just emphasize that bisets admit compact symbolic descriptions somewhat similar to
  presentations of groups by generators and relations or presentation of linear maps by matrices.

\begin{thmB}
Suppose that $f$ is a Thurston map of degree 2, and $T$ is an invariant spanning tree for $f$.
There is an explicit presentation of the biset of $f$ based only on the data $(1)-(3)$ listed above.
\end{thmB}

In Section \ref{ss:ThmB-aut}, we make the statement of Theorem B more precise.
We provide an automaton representing the biset of $f$ in Theorem \ref{t:ThmB-hom}.
The construction is algorithmic.
Theorem B solves, in a particular case, the problem of combinatorial encoding of Thurston maps by means of invariant graphs.
Different contexts of this problem are addressed in \cite{CFP01,BM17,LMS15} for specific families of rational maps.
A relationship between the properties of invariant trees and the properties of the
  iterated monodromy group has been studied in \cite{H17}.

\subsection{Dynamical tree pairs}
It is not always easy to find an invariant spanning tree for a Thurston map $f$.
However, for any spanning tree $T$ for $P(f)$, it is easy to find another spanning tree $T^*$ that maps onto $T$.
The tree $T^*$ is not uniquely defined; there are several ways of choosing suitable subtrees in the graph $f^{-1}(T)$.
Suppose that we want to find an invariant spanning tree for $f$.
It is natural to look at an iterative process, a single step of which is the transition from $T$ to $T^*$.
Such an iterative process will be described below under the name of \emph{ivy iteration}.

We now proceed with a more formal exposition.
Let $f:\S^2\to\S^2$ be a Thurston map of degree two.
Consider two spanning trees $T^*$ and $T$ for $P(f)$ such that $f(T^*)\subset T$.
Moreover, we assume that
\begin{enumerate}
 \item the vertices of $T^*$ are mapped to vertices of $T$ under $f$;
 \item all critical values of $f$ are vertices of $T$;
\end{enumerate}
If these assumptions are fulfilled, then $(T^*,T)$ is called a \emph{dynamical tree pair} for $f$.
Clearly, an invariant spanning tree $T$ for $f$ gives rise to a dynamical tree pair $(T,T)$.
Thus, dynamical tree pairs generalize invariant spanning trees.
Observe also that the restriction of $f$ to every edge of $T^*$ is injective unless the edge contains a critical point of $f$.

A spanning tree $T$ for $P(f)$ gives rise to a distinguished generating set $\Ec_T$ of the fundamental group $\pi_1(\S^2-P(f),y)$ with $y\in\S^2-T$.
Namely, $\Ec_T$ consists of the identity element and the homotopy classes of smooth loops based at $y$ intersecting $T$ only once and transversely (we will make this more precise later).

In Section \ref{s:img}, we state a theorem (Theorem \ref{t:ThmB-hom}) generalizing Theorem B.
It follows from Theorem \ref{t:ThmB-hom} that the biset of $f$ is determined by a dynamical tree pair $(T^*,T)$.
More precisely, the biset can be explicitly presented knowing the following discrete data:
\begin{enumerate}
 \item the ribbon graph structures on $T^*$, $T$;
 \item the map $f:V(T^*)\cup C(T^*)\to V(T)$;
 \item how elements of $\Ec_{T^*}$ are expressed through elements of $\Ec_T$
 (or how both $\Ec_{T^*}$, $\Ec_T$ are expressed through some other generating set of $\pi_1(\S^2-P(f),y)$).
\end{enumerate}
Vice versa, given $T$ and a presentation of the biset of $f$ in the basis associated with $T$, these data can be recovered.

\subsection{The ivy iteration}
The principal objective of this paper is to introduce a computational procedure for
finding invariant (or, more generally, periodic) spanning trees of degree 2 Thurston maps.

Let $f:\S^2\to\S^2$ be a degree 2 Thurston map.
Ivy iteration operates on isotopy classes (rel. $P(f)$) of spanning trees for $P(f)$, which we call \emph{ivy objects}.
Let $\ivy(f)$ denote the set of all ivy objects for $f$.
Let $T$ be a spanning tree, and $[T]$ be the corresponding ivy object.
A symbolic presentation of the biset of $f$ plus a symbolic encoding of the ribbon tree structure on $T$
  give rise to several choices of a spanning tree $T^*$ such that $(T^*,T)$ is a dynamical tree pair for $f$.
Roughly speaking, several choices for $T^*$ are related with different ways of choosing a spanning subtree in $f^{-1}(T)$.
Consider the \emph{pullback relation} $[T]\multimap [T^*]$ on $\ivy(f)$.\footnote{On a somewhat similar note,
the pullback relation on isotopy classes of simple closed curves in $\S^2-P(f)$ is discussed in \cite{Pil03,KPS16}.}
It equips $\ivy(f)$ with a structure of an abstract directed graph.
A subset $C\subset\ivy(f)$ is said to be \emph{pullback invariant} if $[T]\in C$ and $[T]\multimap [T^*]$ imply $[T^*]\in C$.

With the help of a computer, we found finite pullback invariant subsets in $\ivy(f)$
 for several simplest quadratic Thurston maps $f$.
Within these pullback invariant subsets, we found all invariant ivy objects.
Invariant ivy objects, obviously, correspond to invariant (up to homotopy) spanning trees for $f$.
Having a picture for a pullback invariant subset, we can also see many periodic ivy objects of various periods.
Some of these examples will be described in Section \ref{s:ex}.
Note that, if we found a spanning tree for $P(f)$ that is $f$-invariant up to homotopy, then
this tree is a genuine invariant spanning tree for some map homotopic to $f$.
This is good enough since we are interested in classification of Thurston maps up to Thurston equivalence
(in particular, homotopic maps are in the same class).

\subsubsection*{How the ivy iteration compares with known combinatorial algorithms}
The ivy iteration can be considered in the following general context.
The biset associated with $f$ is a way of compactly representing the ``combinatorics'' of $f$.
Unfortunately, the same biset may have very different presentations in different bases.
(A linear algebra analog is that the same linear map has different matrices in different bases.)
Usually, a combinatorial description of $f$ yields a presentation of its biset.
However, given different combinatorial descriptions of a Thurston map, the problem is whether they describe the same thing.
This problem translates into comparison of bisets: do different presentations correspond to the same biset?

Up to date, there are several algorithmic approaches to the comparison of bisets.
We restrict our attention to bisets associated with rational, i.e., unobstructed, Thurston maps.
A natural idea is to look for the ``best'' presentation of a biset.
(This idea is somewhat similar to finding a Jordan normal form --- or some other normal form --- of a linear map.)
This general idea works well for polynomials.
In fact, the \emph{combinatorial spider algorithm} of Nekrashevich \cite{Nek09} aims at solving the problem.
Given a presentation of a biset by a \emph{twisted kneading automaton}, the algorithm searches for the best presentation,
  which is associated with a \emph{kneading automaton}.
The algorithm of \cite{Nek09} is a combinatorial implementation of the \emph{spider algorithm}
  originally developed by Hubbard and Schleicher \cite{HS94} for quadratic polynomials.
A version of the combinatorial spider algorithm was used in \cite{BN06} to identify twisted rabbits
  with the rabbit, co-rabbit, or the airplane polynomials, as well as to classify the twists of $z^2+i$.
In fact, the authors consider not only bisets over the fundamental group but also bisets over the pure mapping class group,
  which turns out to be useful for distinguishing twisted rabbits.
In \cite{BN06}, Bartholdi and Nekrashevich develop another approach to the same problem based on
  inspecting the correspondence on the moduli space associated with the Thurston pullback map.
This second approach relies on the fact that, in examples under consideration, there are few (namely, four) points in $P(f)$
(if $P(f)$ consists of only four points, then the moduli space has complex dimension one).

The second approach of \cite{BN06} has been further developed in \cite{KL18},
where all non-Euclidean Thurston maps with 4 or fewer post-critical points are classified.
The authors also provide an algorithm for identifying the twists of all such maps.
An extension of these results to Thurston maps with bigger post-critical sets is currently unavailable,
  not only because there are too many objects to classify but also because the technique is not easy to adapt.
Thus, to the best of our knowledge, purely combinatorial tools available up to date for comparing bisets
  are restricted either to specific types of Thurston maps (say, topological polynomials, expanding maps, etc.)
  or to maps with few post-critical points.

The process of finding invariant trees described in \cite{H17} is in principle algorithmic.
However, it applies under additional assumptions on the dynamics of the map (sphere or Sierpinski carpet Julia set)
  and produces an invariant tree only for a sufficiently high iterate of the map.

On the other hand, there are also ``floating-point'' algorithms, see e.g. \cite[Section V.2]{BD17}.
Most of these algorithms aim at turning the Thurston iteration into an efficient computation.
For example, given two bisets without obstructions, one can compare them as follows (see Corollary V.9 of \cite{BD17}).
For each of the two bisets, compute the coefficients of the corresponding rational map using a version of Thurston's algorithm.
Then the two rational maps can be compared by comparing the corresponding coefficients.
This approach works as an efficient computation but fails to provide good combinatorial tags to rational maps.

The ivy iteration may be regarded as an attempt to generalize the spider algorithm.
In fact, for quadratic polynomials, the spider algorithm (applied to a not necessarily invariant spider of an actual polynomial)
is the same as the ivy iteration, except that, the issue of arbitrary choices is resolved by specifying a kneading sequence.
A spider is a tree of a very specific shape; it is a star.
The ivy iteration may be in principle applied to arbitrary spanning trees, and to non-polynomial Thurston maps.
In this sense it is more general.
However, it is not as good as the spider algorithm because it is not an algorithm at all.
An important ingredient (an analog of a kneading sequence that would allow to make specific choices) is missing so far.
On the positive side, the ivy iteration can be implemented as a purely combinatorial procedure.
At any step of the iteration, we obtain a presentation of the biset associated with a Thurston map.
If the iteration converges, then we obtain a good presentation, and the hope is that there are only few good ones.
Moreover, the result is a nice visual tag associated with a rational map.
To the best of our knowledge, the ivy iteration does not coincide with other known computational procedures,
although it is conceptually unsophisticated and is based on the same general idea: that of taking pullbacks.

\subsection*{Terminological conventions}
In this paper, we talk about graphs in the sphere as well as abstract graphs.
The former notion belongs to topology, and the latter --- to combinatorics.
We try to clearly distinguish these notions.
A graph (without a specification) usually means a graph in the sphere.
When referring to an abstract graph, we always say ``abstract''.
We sometimes consider oriented edges of graphs in the sphere.
These are edges, for which some orientation (=direction) is specified.
On the other hand, we talk of directed edges in abstract directed graphs.
In this sense, a directed edge is a fundamental notion, which can be defined as an ordered pair of vertices.
It is not ``an edge equipped with a direction''.
Thus our terminological discrepancy between ``oriented edges'' and ``directed edges'' is intentional.

\section{Examples of invariant spanning trees}
\label{s:ex}
In this section, we describe some examples of invariant spanning trees.
We confine ourselves with degree two rational maps.

\subsection{Quadratic polynomials}
\label{ex:quadpoly}
Let $p(z)=z^2+c$ be a post-critically finite quadratic polynomial.
We will write $J(p)$ for the Julia set of $p$ and $K(p)$ for the filled Julia set of $p$.
Set $X$ to be the \emph{forward $p$-orbit} of $c$, i.e., the set $\{p^{\circ n}(c)\,|\, n\ge 0\}$.
The landing point of the dynamical external ray $R_p(0)$ of $p$ with argument $0$ is denoted by $x_\beta$.
This point is usually called the \emph{$\beta$-fixed point}.
We will use the terminology of \cite{M,poi93,poi10}, in particular, the notion of a regulated hull.
Define $T$ as the union of $\{\infty\}\cup R_p(0)$ and the regulated hull of $X\cup\{x_\beta\}$.
Then $T$ is an invariant spanning tree for $p$.

Recall that the regulated hull of $X$ is called the \emph{Hubbard tree} of $p$.
Thus $T$ is strictly bigger than the Hubbard tree of $p$.
It is important that the tree $T$ contains both critical values of $p$.
In our terminology, the Hubbard tree itself is not a spanning tree for $p$.
To specify a graph structure on $T$, we need to define vertices.
By definition, the vertices of $T$ are post-critical points and branch points of $T$.
On the other hand, $x_\beta$ is never a vertex of $T$.

As an example, an invariant spanning tree $T$ for the basilica polynomial $p(z)=z^2-1$ looks as follows:
$$
\xymatrix{
{\stackrel{-1}{\bullet}} \ar@{-}[r] &{\stackrel{0}{\circ}} \ar@{-}[r] &{\stackrel{\infty}{\bullet}}
}
$$
Critical values are shown as solid, and other vertices of $T$ as circles.
For the rabbit polynomial $z^2+c$, where $c\approx -0.122561 + 0.744862i$,
the tree $T$ looks as follows:
$$
\xymatrix{
 & {\stackrel{v}{\bullet}} \ar@{-}[d] & & &\\
   {\stackrel{w}{\circ}} \ar@{-}[r] & {\stackrel{x_\alpha}{\circ}} \ar@{-}[dr]& &\\
 & &{\stackrel{0}{\circ}} \ar@{-}[r] & {\stackrel{\infty}{\bullet}}
}
$$
Here $x_\alpha$ is the \emph{$\alpha$-fixed point}, i.e.,
  the landing point of external rays with arguments $\frac 17$, $\frac 27$ and $\frac 47$.
The edge $0\infty$ contains the $\beta$-fixed point $x_\beta$ and the points $-x_\alpha$, $-w$.
The latter three points are not in $V(T)$ since they are neither post-critical nor branch points.

\subsection{Matings}
\label{ss:mat}
Let $p$ and $q$ be two post-critically finite quadratic polynomials.
Consider a compactification $\oC$ of $\C$ obtained by adding a circle of infinity.
More precisely, the circle at infinity is parameterized by the arguments of external rays.
For an angle $\theta\in\R/\Z$, we let $R_p(\theta)$ be the corresponding external ray in the dynamical plane of $p$.
We will write $E_p(\theta)$ for the corresponding point in the circle at infinity.
Let $p$ and $q$ act on different copies of $\oC$, say, $p$ acts on $\oC_p$ and $q$ on $\oC_q$.
Then $E_p(\theta)$ and $E_q(\theta)$ will refer to points in $\oC_p$ and $\oC_q$, respectively.
Consider the disjoint union $Y=\oC_p\sqcup\oC_q$.
Let $\sim$ be an equivalence relation on $Y$ defined as follows.
We have $x\sim y$ and $x\ne y$ if and only if one of the two points, say, $x$, has the form $E_p(\theta)$,
  and the other point $y$ has the form $E_q(-\theta)$.
The quotient space $\S^2_{p\amalg q}=Y/\sim$ is called the \emph{formal mating space} of $p$ and $q$.
It is easy to see that $\S^2_{p\amalg q}$ is homeomorphic to $\S^2$.
The map $F:Y\to Y$ defined as $p$ on $\oC_p$ and $q$ on $\oC_q$ descends to the quotient space.
Thus we have a naturally defined map $f:\S^2_{p\amalg q}\to \S^2_{p\amalg q}$.
We write $f=p\amalg q$ and call $f$ the \emph{formal mating} of $p$ and $q$.
To construct an invariant spanning tree for $f$, it suffices to construct invariant spanning trees for $p$ and $q$ as above,
  and then take the union of the two trees.
Below, the thus constructed invariant spanning tree is shown for $p\amalg q$, where $p$ is the rabbit polynomial,
  and $q$ is the basilica polynomial.
$$
\xymatrix{
 & {\stackrel{v}{\bullet}} \ar@{-}[d] & & & & \\
   {\stackrel{w}{\circ}} \ar@{-}[r] & {\stackrel{x_\alpha}{\circ}} \ar@{-}[dr]& & & \\
 & &{\stackrel{0}{\circ}} \ar@{-}[r]
& {\stackrel{\ol 0}{\circ}} \ar@{-}[r]& {\stackrel{\ol{-1}}{\bullet}}
}
$$
Here, $\ol 0$ and $\ol{-1}$ refer to the points $0$ and $-1$ in the dynamical plane $\oC_q$
(more precisely, in the image of this plane in the space $\S^2_{p\amalg q}$).
The point $\infty=\ol\infty$ (more precisely, the image of $E_p(0)$ and $E_q(0)$ in $\S^2_{p\amalg q}$)
is not a critical value anymore.
Moreover, this point is not a vertex of the invariant spanning tree shown above.
It belongs to the edge connecting $0$ with $\ol 0$.

\subsection{Captures}
\label{ss:cap}
The following definition of a capture is equivalent to the one from \cite{Rees_description}.
However, we phrase the definition somewhat differently.
Introduce a smooth structure on $\S^2$.
We also fix a smooth spherical metric on $\S^2$.
Given a vector $v_x$ at some point $x\in\S^2$ and $\eps>0$, there is a vector field $D(v_x,\eps)$ such that
\begin{enumerate}
  \item outside of the $\eps$-neighborhood of $x$ with respect to the spherical metric, $D(v_x,\eps)=0$;
  \item at point $x$, the vector $D(v_x,\eps)_x$ coincides with $v_x$.
\end{enumerate}
We may consistently choose vector fields $D(v_x,\eps)$ for all $x$, $v_x$ and $\eps$
 so that they depend continuously (or even smoothly) on all parameters.
Consider a smooth path $\beta :[0,1] \to \S^2$ and choose a small $\eps>0$.
Define the map $\sigma_{\beta}:\S^2\to \S^2$ as the time $[0,1]$ flow of the non-autonomous vector field $D(\dot\beta(t),\eps)$.
Here $\dot\beta(t)$ is the velocity vector of $\beta$ at the point $\beta(t)$.
The map $\sigma_\beta$ is a self-homeomorphism of $\S^2$ with the following properties:
\begin{enumerate}
 \item we have $\sigma_{\beta}(\beta(0)) = \beta(1)$;
 \item the map $\sigma_{\beta}$ is the identity outside of the $\eps$-neighborhood $U_\eps(\beta)$ of $\beta[0,1]$;
 \item the map $\sigma_\beta$ is homotopic to the identity modulo $\S^2-U_\eps(\beta)$.
\end{enumerate}
The homeomorphism $\sigma_{\beta}$ depends on $\beta$, $\eps$ and on a particular choice of $D(v_x,\eps)$.
However, if the path $\beta$ is fixed,
then any two such homeomorphisms $\sigma_{\beta}$ and $\tilde {\sigma}_{\beta}$ are homotopic relative to
$\S^2-U_\eps(\beta)$.

We can consider a composition $\sigma_{\beta} \circ p$, where $p$ is a post-critically finite quadratic polynomial,
 and the choice of $\beta$ depends on $p$.
Set $\beta(0)=\infty$, and place $\beta(1)$ at some strictly preperiodic point that is not postcritical.
If $U_\eps(\beta)$ does not contain finite post-critical points of the map $p$ and iterated images of $\beta(1)$,
then all such maps $\sigma_{\beta} \circ p$ with fixed $\beta$ are equivalent.
In other words, the Thurston equivalence class of $f=\sigma_{\beta} \circ p$ depends only on $\beta$ and $p$.
The post-critical set of $f$ is the union of $P(p)$ and the forward orbit of $\beta(1)$, including $\beta(1)$.
Note that $\beta(1)$ is a critical value of $f$, the image of the critical point $\infty$.
In fact, the homotopy class of $f$ does not change if we deform $\beta$ within the same homotopy class relative to $P(f)$.
When talking about $\sigma_\beta\circ p$, we will always assume that the set $\beta[0,1)$ is disjoint from $P(p)$ and from the forward orbit of $\beta(1)$.
The path $\beta$ is called a \emph{capture path} for $p$.

\begin{dfn}
The map $\sigma_{\beta} \circ p$ defined as above is called the (generalized) \emph{capture} of $p$ associated with $\beta$.
The capture $\sigma_\beta\circ p$ is said to be \emph{simple} if there is only one $t_0\in [0,1]$ with $\beta(t_0)\in J(p)$.
In the latter case, the corresponding capture path is called a \emph{simple capture path}.
\end{dfn}

Suppose that $\beta(1)$ is eventually mapped to a periodic critical point of $p$, i.e., to $0$ if $p(z)=z^2+c$.
Then a simple capture path $\beta:[0,1]\to\S^2$ looks as follows.
There is a parameter $t_0\in (0,1)$ such that $\beta[0,t_0)$ is in the basin of infinity,
 $\beta(t_0,1]$ is in the Fatou component eventually mapped to a super-attracting periodic basin,
 and $\beta(t_0)$ is a point of the Julia set.
We may arrange $\beta|_{[0,t_0)}$ to go along an external ray, and $\beta|_{(t_0,1]}$ to go along an internal ray.
If $\beta(1)\in J(p)$, then $\beta[0,1]$ can be chosen as the union of an external ray and its landing point.
Different simple capture paths lead to at most two different Thurston equivalence classes of captures
 provided that $p$ and $\beta(1)$ are fixed, cf. \cite[Section 2.8]{ReesV3}.

Generalized captures were first defined by M.Rees in \cite{Rees_description}.
Simple captures go back to B.Wittner \cite{Wittner}.
Both Wittner and Rees used the word ``capture'' to mean simple capture.
We, on the contrary, use the word ``capture'' to mean a generalized capture.
It is worth noting that the original approach of Wittner also used invariant trees.
The study of captures is motivated by the following theorem of M.Rees:

\begin{thm}[Polynomial-and-Path Theorem, Section 1.8 of \cite{Rees_description}]
 Suppose that $R$ is a rational function of degree two with a periodic critical point $c_1$.
 Suppose also that the other critical point $c_2$ of $R$ is not periodic but is eventually mapped to $c_1$.
 Then $R$ is equivalent to some capture $\sigma_\beta\circ p$.
 Moreover, the quadratic polynomial $p$ has a periodic critical point of the same period as $c_1$.
\end{thm}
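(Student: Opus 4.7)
The plan is to decompose $R$ into a ``periodic piece'' supported in the super-attracting basin of the cycle of $c_1$ and a ``preperiodic piece'' carrying the orbit of $c_2$ into that cycle, then to realize the periodic piece by a quadratic polynomial $p$ whose finite critical point has the same period as $c_1$, and the preperiodic piece by a capture path $\beta$ in the dynamical plane of $p$. Theorem A will then match $R$ with $\sigma_\beta\circ p$.

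First I would construct an $R$-invariant spanning tree $T_R$ for $P(R)$ adapted to this decomposition. Let $n$ be the period of $c_1$ and $k\ge 1$ the smallest integer with $R^{\circ k}(c_2)$ lying in the critical cycle. Inside the super-attracting Fatou cycle, joining the periodic critical values to the periodic critical points by internal rays yields an $R$-invariant subtree $T_{\mathrm{per}}$ spanning the orbit of $c_1$. Complete $T_R$ by attaching an arc from $v_2=R(c_2)$ to $T_{\mathrm{per}}$, chosen together with its forward $R$-iterates so that the union is $R$-invariant and has $V(T_R)\supset P(R)$.

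Next I would build the polynomial model. The ribbon tree $(T_{\mathrm{per}},R|_{T_{\mathrm{per}}})$ records the combinatorial data of a super-attracting cycle of period $n$ for a quadratic map. By the classical parametrization of period-$n$ hyperbolic components of the Mandelbrot set, there is a post-critically finite polynomial $p(z)=z^2+c$ whose finite critical point is periodic of period $n$ and whose Hubbard tree realizes the same combinatorial pattern. The invariant spanning tree $T_p$ for $p$ from Section \ref{ex:quadpoly} then restricts on the finite plane to a ribbon copy of $T_{\mathrm{per}}$, and the preperiodic orbit of $v_2$ under $R$ determines a unique preperiodic orbit $w, p(w), \ldots, p^{\circ(k-1)}(w)$ in $K(p)$ with $p^{\circ k}(w)$ on the critical cycle. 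Choose a capture path $\beta$ with $\beta(0)=\infty$, $\beta(1)=w$, and $\beta(0,1)$ disjoint from $P(p)$ and from the forward orbit of $w$; set $f=\sigma_\beta\circ p$. By Section \ref{ss:cap}, $f$ admits an invariant spanning tree $T_f$ built from $T_p$ together with $\beta[0,1]$ and arcs along $T_p$ joining the iterates $p^{\circ j}(w)$.

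The construction then yields a cellular homeomorphism $\tau:T_f\to T_R$ which preserves the ribbon structure, satisfies $\tau\circ f=R\circ\tau$ on $V(T_f)\cup C(T_f)$, and sends the critical values of $f$ (the one in the Hubbard tree of $p$ and $\beta(1)=w$) to $v_1$ and $v_2$ respectively. Theorem A produces the desired Thurston equivalence, and $p$ has a period-$n$ critical point by construction. The main obstacle is the existence claim in the second step: one must know that the periodic combinatorial data extracted from $R$ is genuinely realized by a quadratic polynomial. This rests on the classical correspondence between period-$n$ hyperbolic components of the Mandelbrot set and admissible combinatorial ``cycle models'' for quadratic maps, together with a canonical identification of preperiodic points on $T_R$ with preperiodic points on $T_p$ via itineraries. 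Once those ingredients are in hand, choosing $\beta$ and verifying the hypotheses of Theorem A are essentially bookkeeping.
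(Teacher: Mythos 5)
This statement is not proved in the paper: it is cited verbatim from Section~1.8 of Rees's 1992 paper \cite{Rees_description} and is used only as motivation for the study of captures. There is no ``paper's own proof'' to compare against, so I will assess your sketch on its own merits.

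Your plan has a genuine circularity problem plus several gaps that would require, in effect, reproving the theorem. The most serious issue is that you take for granted the existence of an $R$-invariant spanning tree $T_R$. Constructing such a tree for a general quadratic rational map is precisely the hard problem this whole paper is about; the introduction notes that even for expanding Thurston maps only sufficiently high \emph{iterates} are known to admit invariant spanning trees (via \cite{H17}). Your concrete recipe does not work as stated: when $n>1$, the periodic critical points and critical values of the cycle of $c_1$ lie in \emph{different} Fatou components, so one cannot ``join them by internal rays''; a Hubbard-tree-like arc between them has to pass through Fatou boundaries, and choosing such arcs, and an arc from $v_2$ into $T_{\mathrm{per}}$, so that the whole union is $R$-forward-invariant and spans $P(R)$ is not a simple assertion. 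Establishing this would already encapsulate most of what Rees proves by other means.

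Even granting $T_R$, the remaining steps are not routine bookkeeping. The existence of a polynomial $p$ whose Hubbard tree ``realizes the same combinatorial pattern'' needs an actual realization theorem for abstract Hubbard trees (Poirier \cite{poi93,poi10}), not the parametrization of hyperbolic components, and one must check that the pattern extracted from $T_{\mathrm{per}}$ satisfies the expansivity/angled-tree axioms. Your appeal to ``itineraries'' to locate $w\in K(p)$ presupposes an itinerary structure on the rational side that hasn't been set up. Finally, the paper itself warns in Section~\ref{ss:cap} that the natural candidate tree for a capture need not be $f$-invariant, and that different simple captures with the same endpoint $\beta(1)$ can be Thurston-inequivalent; so the choice of $\beta$ is a real degree of freedom that must be matched to $R$, and the asserted ribbon isomorphism $\tau:T_f\to T_R$ (together with $\tau\circ f=R\circ\tau$ on $V\cup C$) is exactly the content that needs an argument, not a byproduct of the construction. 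Rees's actual proof does not go through invariant trees at all, so if these gaps could be closed you would have a genuinely different proof, but as written the sketch assumes the conclusion in several places.
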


Suppose that $\beta$ is a simple capture path for $p$, and $f=\sigma_\beta\circ p$ is the corresponding capture.
Let $T$ be the minimal subtree of the extended Hubbard tree of $p$ that includes $P(f)$.
Then $T$ satisfies the property $p(T)\subset T$.
Note that it may happen that $f(T)\not\subset T$, so that $T$ is not an invariant spanning tree for $f$.
For example, let $p$ be the airplane polynomial.
Choose $\beta(1)$ to be an iterated $p$-preimage of $0$ on an edge of the Hubbard tree of $p$.
Then $T$ coincides with the Hubbard tree set-theoretically but has more vertices.
Some edge $e$ of $T$ maps under $p$ so that the $\beta(1)\in p(e)$ but $\beta(1)$ is not an endpoint of $p(e)$.
The latter is a consequence of the fact that there are no vertices of $T$ mapping to $\beta(1)$.
The homeomorphism $\sigma_\beta$ displaces $p(e)$ so that $\sigma_\beta(p(e))$ no longer contains $\beta(1)$.
Thus $T$ is not forward invariant under $f=\sigma_\beta\circ p$.

It may seem plausible that $T$ can be deformed slightly into a genuine invariant spanning tree.
Unfortunately, this is not always true.
It is known that different simple captures (even those for which $\beta(1)$ is the same)
  may yield different Thurston equivalence classes, see e.g. \cite[Section 2.8]{ReesV3}.
If $T$ were deformable into an invariant spanning tree, then, by Theorem A,
  all simple captures with given $\beta(1)$ would be Thurston equivalent, a contradiction.

In the following lemma, by a \emph{support} of a homeomorphism $\sigma:\S^2\to\S^2$ we mean
  the closure of the set of points $x\in\S^2$ with $\sigma(x)\ne x$.

\begin{lem}
  \label{l:capT}
  Let $p$, $\beta$ and $T$ be as above.
Assume that the support of $\sigma_\beta$ is a sufficiently narrow neighborhood of $\beta[0,1]$, i.e.,
 a subset of the $\eps$-neighborhood of $\beta[0,1]$ for sufficiently small $\eps>0$.
Then $T$ is an invariant spanning tree for $f=\sigma_\beta\circ p$ whenever $\beta[0,1]\cap p(T)=\0$.
\end{lem}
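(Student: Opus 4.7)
\medskip
\noindent\textbf{Proof plan.} The plan is to verify directly the three defining conditions for $T$ to be an invariant spanning tree of $f=\sigma_\beta\circ p$: namely, (a) $T$ is a spanning tree for $P(f)$, (b) $f(T)\subset T$, and (c) $f(V(T))\subset V(T)$. Condition (a) is essentially built into the construction in Section~\ref{ss:cap}: by definition $T$ is the minimal subtree of the extended Hubbard tree $T_p$ that contains $P(f)$, and its vertex set is declared to be $P(f)$ together with the branch points of $T$, so (a) holds once we check that this is a legitimate graph structure.

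The main content is in (b). Here one uses the two ingredients already isolated in the text: first, that $p(T)\subset T$ because $T_p$ is $p$-invariant and taking minimal subtrees preserves this inclusion once $P(f)$ is contained in $T$; second, that $\sigma_\beta$ is the identity outside the $\eps$-neighborhood $U_\eps(\beta)$ of $\beta[0,1]$. By hypothesis $p(T)\cap\beta[0,1]=\emptyset$, and since both $p(T)$ and $\beta[0,1]$ are compact subsets of $\S^2$, they lie at positive spherical distance $\delta>0$ from each other. Choosing $\eps<\delta$ we get $U_\eps(\beta)\cap p(T)=\emptyset$, so $\sigma_\beta$ fixes $p(T)$ pointwise. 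Hence
\[
f(T)=\sigma_\beta(p(T))=p(T)\subset T,
\]
which gives (b) and, as a byproduct, shows $f|_T=p|_T$.

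For (c), the previous paragraph reduces the claim to $p(V(T))\subset V(T)$. The extended Hubbard tree $T_p$ is an invariant spanning tree for $p$, so $p(V(T_p))\subset V(T_p)$. One then observes that $V(T)\subset V(T_p)$: the points of $P(f)$ are automatically vertices of both trees, and each branch point of the minimal subtree $T\subset T_p$ is in particular a branch point of $T_p$ (cutting $T$ at such a point disconnects $T$ into $\ge 3$ pieces, and the corresponding pieces of $T_p$ are disconnected from one another as well). Thus $p(V(T))\subset p(V(T_p))\cap p(T)\subset V(T_p)\cap T$. It remains to note that every point of $V(T_p)\cap T$ lies in $V(T)$: such a point is either post-critical for $p$ (hence in $P(f)\subset V(T)$) or a branch point of $T_p$ lying on $T$, and the construction of the minimal subtree of an invariant tree ensures that images of vertices are vertices because preimages of $P(f)$ under $p$ inside $T_p$ already lie in $T$.

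The step I expect to be the main obstacle is the last one: carefully checking that the cellular structure on $T$, inherited from $T_p$, is preserved by $p$. Everything else is a packaging of the hypothesis $\beta[0,1]\cap p(T)=\emptyset$ together with the compactness-based choice of $\eps$. Once the cellular compatibility is established, $T$ satisfies both conditions in the definition of an invariant spanning tree, and the lemma follows.
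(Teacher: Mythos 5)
Your argument for the key inclusion $f(T)\subset T$ coincides with the paper's proof, which consists of exactly this step and nothing more: $\beta[0,1]$ and $p(T)$ are disjoint compact sets, so for $\eps$ small enough the support of $\sigma_\beta$ misses $p(T)$, hence $\sigma_\beta=\mathrm{id}$ on $p(T)$ and $f(T)=\sigma_\beta(p(T))=p(T)\subset T$. The paper stops there, leaving the remaining conditions implicit (they follow from $f=p$ on $T$ together with the standing facts about $p$ and $T$).

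Where you go further and try to verify the vertex condition, there is a concrete error. You claim $V(T)\subset V(T_p)$, justifying it by saying that the points of $P(f)$ are ``automatically vertices of both trees.'' That is false: $T_p$ is a spanning tree for the \emph{smaller} set $P(p)$, while $P(f)$ also contains the forward orbit of $\beta(1)$, whose points typically sit in the interiors of edges of $T_p$ and hence are not vertices of $T_p$. (The discussion right before the lemma stresses exactly this: in the airplane example, ``$T$ coincides with the Hubbard tree set-theoretically but has more vertices.'') Consequently the first step $p(V(T))\subset p(V(T_p))$ of your chain fails. The inclusion between vertex sets in fact goes the other way, and that is what rescues the argument: since every endpoint of $T_p$ already lies in $P(p)\subset P(f)$, the minimal subtree of $T_p$ containing $P(f)$ is $T_p$ itself, so $T=T_p$ as a subset of $\S^2$ and $V(T)\supset V(T_p)$. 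With this in hand, $p(V(T))\subset V(T)$ splits cleanly: $p(P(f))=f(P(f))\subset P(f)$ because $f=p$ on $T\supset P(f)$, and $p(B(T))=p(B(T_p))\subset V(T_p)\subset V(T)$ because $T$ and $T_p$ are the same point set, so $B(T)=B(T_p)$.
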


Recall our assumption that the capture path $\beta$ is simple.

\begin{proof}
  Suppose that $\beta[0,1]\cap p(T)=\0$.
Then the support of $\sigma_\beta$ can be made disjoint from $p(T)$.
It follows that $\sigma_\beta=id$ on $p(T)$, therefore, $f(T)=\sigma_\beta(p(T))=p(T)\subset T$.
\end{proof}

\section{Proof of Theorem A}
Let $f:\S^2\to\S^2$ be a Thurston map of degree two.
It will be convenient to mark the critical points of $f$, i.e., to distinguish between $c_1(f)$ and $c_2(f)$.

\begin{dfn}[Marked Thurston maps]
 A (critically) marked Thurston map of degree two is an ordered triple $(f,c_1,c_2)$, where
  $f$ is a Thurston map of degree two, and $\{c_1,c_2\}$ is the set of all critical points of $f$.
 Thus, if $c_1\ne c_2$, then $(f,c_1,c_2)$ and $(f,c_2,c_1)$ are different marked Thurston maps.
 To lighten the notation, we will sometimes write $f$ for a Thurston map $(f,c_1,c_2)$.
 In this case, we will write $c_1(f)$, $c_2(f)$ to emphasize the dependence on $f$.
\end{dfn}

We now recall the definition of Thurston equivalence.

\begin{dfn}[Thurston equivalence]
\label{d:Th-eqiv}
 Let $f$ and $g$ be two Thurston maps.
 They are said to be \emph{Thurston equivalent} if there are two orientation preserving homeomorphisms $\phi$, $\psi:\S^2\to\S^2$ with the following properties:
 \begin{enumerate}
  \item We have $\phi=\psi$ on $P(f)$, and $\phi(P(f))=P(g)$.
  \item The maps $\phi$ and $\psi$ are isotopic modulo $P(f)$.
  \item We have $\psi\circ f=g\circ\phi$.
 \end{enumerate}
 If $f$ and $g$ are marked Thurston maps of degree two, then we additionally require that $\phi(v_i(f))=v_i(g)$ for $i=1,2$.
\end{dfn}

For example, two topologically conjugate Thurston maps are Thurston equivalent.
The following is another particular case of Thurston equivalence.

\begin{lem}
 \label{l:deform}
 Let $f_t$, $t\in [0,1]$ be a continuous family of Thurston maps with $P(f_t)=P(f_0)$.
 Then all $f_t$ are Thurston equivalent.
\end{lem}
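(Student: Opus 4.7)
The plan is to argue locally in $t$ and then concatenate. By compactness of $[0,1]$ and transitivity of Thurston equivalence, it suffices to show that for each $t_0 \in [0,1]$ there is $\eps > 0$ such that $f_t$ is Thurston equivalent to $f_{t_0}$ whenever $|t - t_0| < \eps$. I take $t_0 = 0$. Several invariants are automatically constant: since $v_i(f_t)$ belongs to the fixed finite set $P := P(f_0)$ and depends continuously on $t$, it is constant, $v_i(f_t) = v_i(f_0)$; the same locally-constant argument applied to $t \mapsto f_t|_P$ gives $f_t|_P = f_0|_P$; and the critical points $c_i(f_t)$ depend continuously on $t$, so for small $t$ they lie close to $c_i(f_0)$.

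The first step is to use the isotopy extension theorem to build, for $t$ near $0$, an ambient isotopy $\phi_t:\S^2\to\S^2$ with $\phi_0 = \mathrm{id}$, $\phi_t(P) = P$, and $\phi_t(c_i(f_0)) = c_i(f_t)$ for $i = 1,2$. Setting $h_t := f_t \circ \phi_t$, this is a degree-$2$ branched covering with critical set $C(f_0)$, critical value set $V := \{v_1(f_0), v_2(f_0)\}$, matching local degrees, $h_0 = f_0$, and $h_t \to f_0$ uniformly as $t \to 0$.

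The main step, and what I expect to be the main obstacle, is to construct a second ambient isotopy $\psi_t$ with $\psi_0 = \mathrm{id}$, $\psi_t|_P = \phi_t|_P$, $\psi_t$ isotopic to $\phi_t$ rel $P$, and $\psi_t \circ f_0 = h_t$. I would patch local models. Near each critical value $v_i \in V$, fix a chart in which $f_0$ takes the normal form $z \mapsto z^2$; because $h_t$ shares the same critical point mapping to $v_i$ with local degree $2$, a $t$-dependent smooth change of target coordinate, cut off to the identity outside a small disk about $v_i$, brings $h_t$ into the same normal form and provides $\psi_t$ locally there. Away from $V$, both $f_0$ and $h_t$ restrict to unramified degree-$2$ coverings of $\S^2 \setminus V$ with the same monodromy, so on each simply connected chart $U \subset \S^2 \setminus V$ the recipe $\psi_t := h_t \circ (f_0|_U)^{-1}$ defines $\psi_t$ up to a deck transformation; the $C^0$-closeness of $h_t$ to $f_0$ pins down the branch choice, and the local pieces glue to a global ambient isotopy that fixes $P$ setwise. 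Verifying this coherent gluing, particularly near $V$, is the technical heart of the argument.

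Finally, the pair $(\phi_t, \psi_t)$ realizes a Thurston equivalence per Definition \ref{d:Th-eqiv}: the identity $\psi_t \circ f_0 = h_t = f_t \circ \phi_t$ is built in; $\phi_t|_P = \psi_t|_P$ and $\phi_t \simeq \psi_t$ rel $P$ hold by construction; and the marking condition $\phi_t(v_i(f_0)) = v_i(f_t)$ follows from $v_i(f_t) = v_i(f_0)$ together with the fact that $\phi_t$ fixes the critical values. Concatenating the local equivalences across a finite subcover of $[0,1]$ completes the proof.
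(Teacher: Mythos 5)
Your approach differs substantially from the paper's and, more importantly, contains a gap at exactly the point you flag as the ``technical heart.''

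The paper's proof is a one-liner: apply the covering homotopy theorem to the family $f_t$ to produce homeomorphisms $\phi_t$ with $\phi_0 = \mathrm{id}$ and $f_t\circ\phi_t = f_0$; then take $\psi_t=\mathrm{id}$ and check Definition~\ref{d:Th-eqiv} with $f=f_0$, $g=f_t$. The key structural point is that the adjustment happens on the \emph{source} side: lifting the (constant) homotopy of $f_0$ through the varying coverings $f_t$ is always possible, and the resulting $\phi_t$ automatically fixes $P$ pointwise by continuity.

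Your proposal adjusts the \emph{target} side, and this is where it breaks. Having built an ambient isotopy $\phi_t$ (via the isotopy extension theorem) carrying $c_i(f_0)$ to $c_i(f_t)$, you set $h_t = f_t\circ\phi_t$ and seek $\psi_t$ with $\psi_t\circ f_0 = h_t$. Such a $\psi_t$ is forced to be $h_t\circ f_0^{-1}$ on each branch, and for it to be globally well-defined you need $h_t$ to be constant on the fibers of $f_0$: whenever $f_0(x)=f_0(x')$ you need $h_t(x)=h_t(x')$. Writing $\iota_0$ and $\iota_t$ for the nontrivial deck transformations of $f_0$ and $f_t$ on the complement of the critical points, this condition is equivalent to $\phi_t\circ\iota_0 = \iota_t\circ\phi_t$. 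The isotopy extension theorem, applied merely to move critical points within $P$, gives you no control over this; a generic $\phi_t$ will \emph{not} intertwine the deck involutions, and then no $\psi_t$ exists. Choosing branches by $C^0$-closeness does not rescue this: if you set $\psi_t(z) = h_t(x_1)$ where $f_0(x_1)=z$, then $\psi_t\circ f_0$ agrees with $h_t$ at $x_1$ but generally disagrees at the other preimage $x_2$, so $\psi_t\circ f_0\neq h_t$ no matter how the local branches are glued. The local normal-form patching near $V$ suffers from the same defect. In short, the step you identify as needing verification does not in fact go through; the fix is to construct $\phi_t$ by lifting (as the paper does), which builds the intertwining of deck transformations in from the start, rather than by a free ambient isotopy that only matches critical points.

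Your preliminary observations (local-in-$t$ reduction via transitivity, constancy of $v_i(f_t)$ and of $f_t|_P$) are correct and harmless, but they do not repair the central construction.
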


Thurston maps $f_0$ and $f_1$ from Lemma \ref{l:deform} are said to be \emph{homotopic}.
This lemma is known but we will sketch a proof for completeness.

\begin{proof}[Sketch of a proof]
By the covering homotopy theorem, there is a homotopy $\phi_t$ with $\phi_0=id$ and $f_t\circ\phi_t=f_0$.
It is easy to see that $\phi_t$ are orientation preserving homeomorphisms.
Setting $g=f_t$, $f=f_0$, $\phi=\phi_t$, $\psi=id$, we see that the requirements of Definition \ref{d:Th-eqiv} are fulfilled.
\end{proof}

\subsection{Cyclic sets and pseudoaccesses}
Recall Theorem A.
We are given two quadratic Thurston maps $f$, $g$ with invariant spanning trees $T_f$, $T_g$.
There is an isomorphism $\tau:T_f\to T_g$ of ribbon graphs that conjugates $f$ with $g$ on $V(T_f)\cup C(T_f)$
  and maps critical values to critical values.
Also, $\tau$ extends to a germ of $f^{-1}(T_f)$ at each point of $C(T_f)$ so that the extension 
 still preserves the cyclic order of edges around any vertex, and still takes the dynamics of $f$ to the dynamics of $g$. 
We want to prove that $\tau$ extends to a Thurston equivalence between $f$ and $g$.

Modify the trees $T_f$, $T_g$ by adding to their vertices the critical points of $f$ belonging to $T_f$, $T_g$, respectively.
We will write $\ol T_f$, $\ol T_g$ for the modified trees.
These are also ribbon graphs.
Note that $\tau$ takes the vertices of $\ol T_f$ to the vertices of $\ol T_g$.
Moreover, $\tau$ induces an isomorphism of ribbon graphs.

\begin{figure}
  \centering
  \includegraphics[height=5cm]{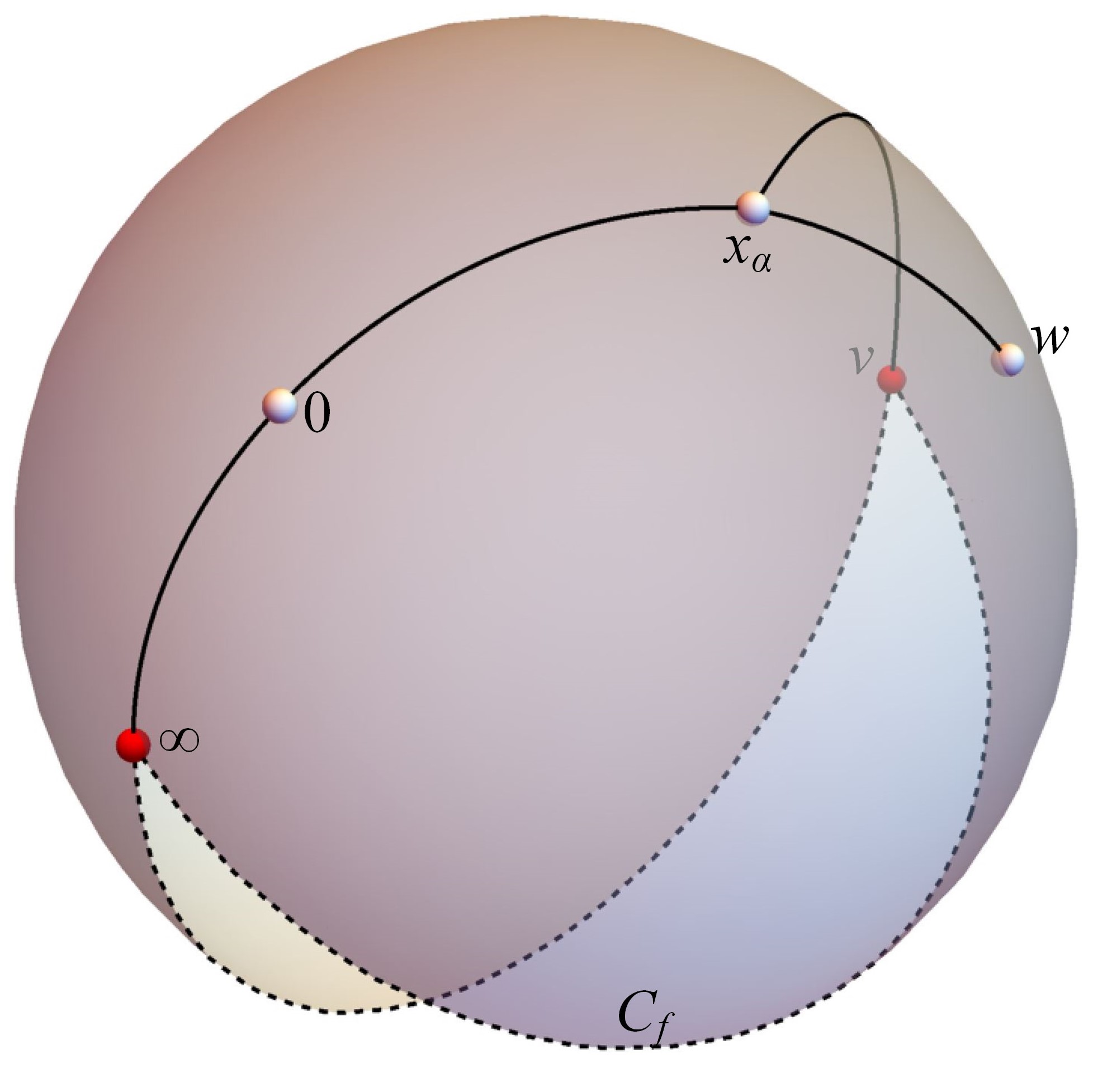}\hspace{1cm}
  \includegraphics[height=5cm]{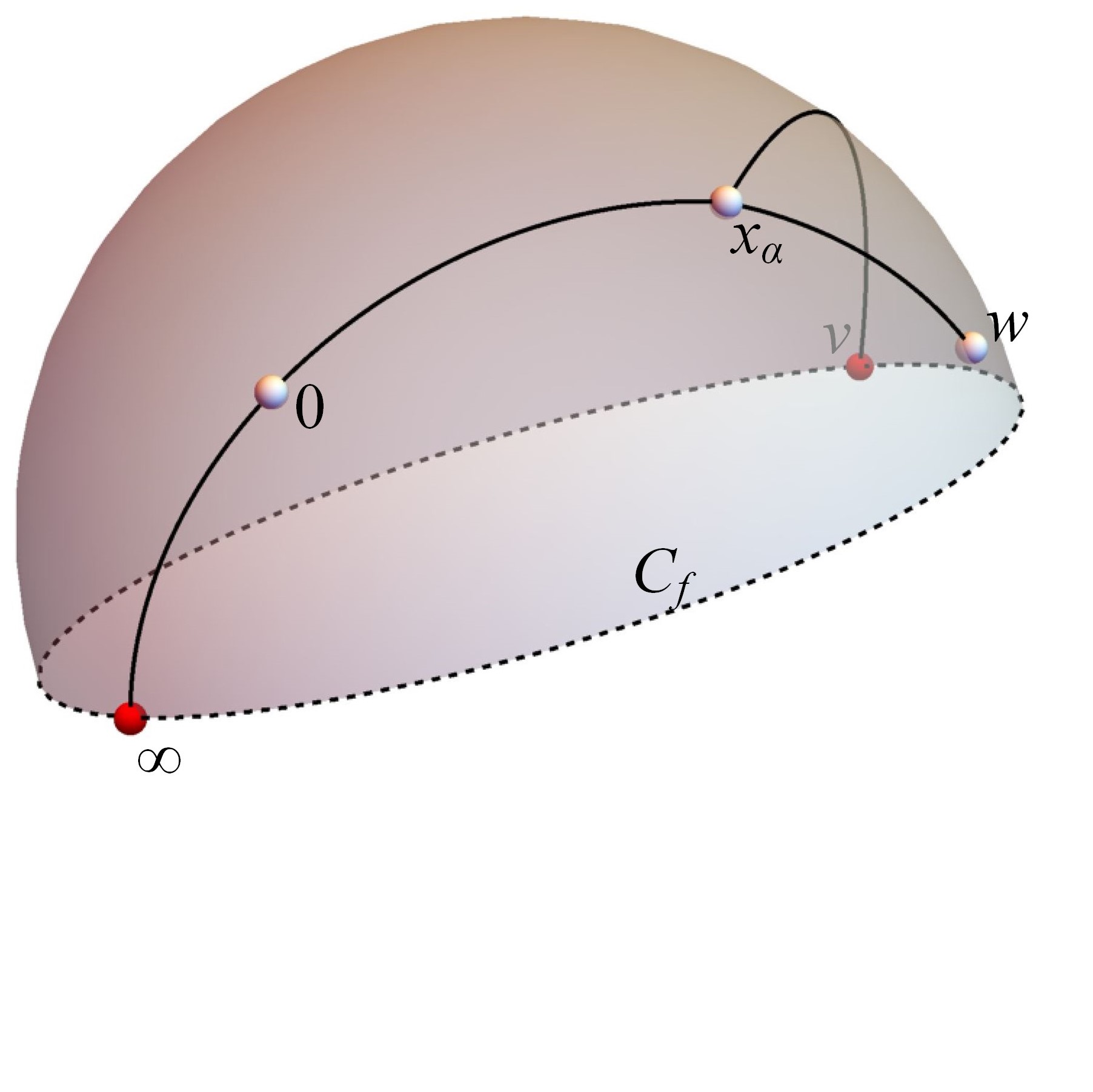}\hfill\linebreak
  \includegraphics[height=5cm]{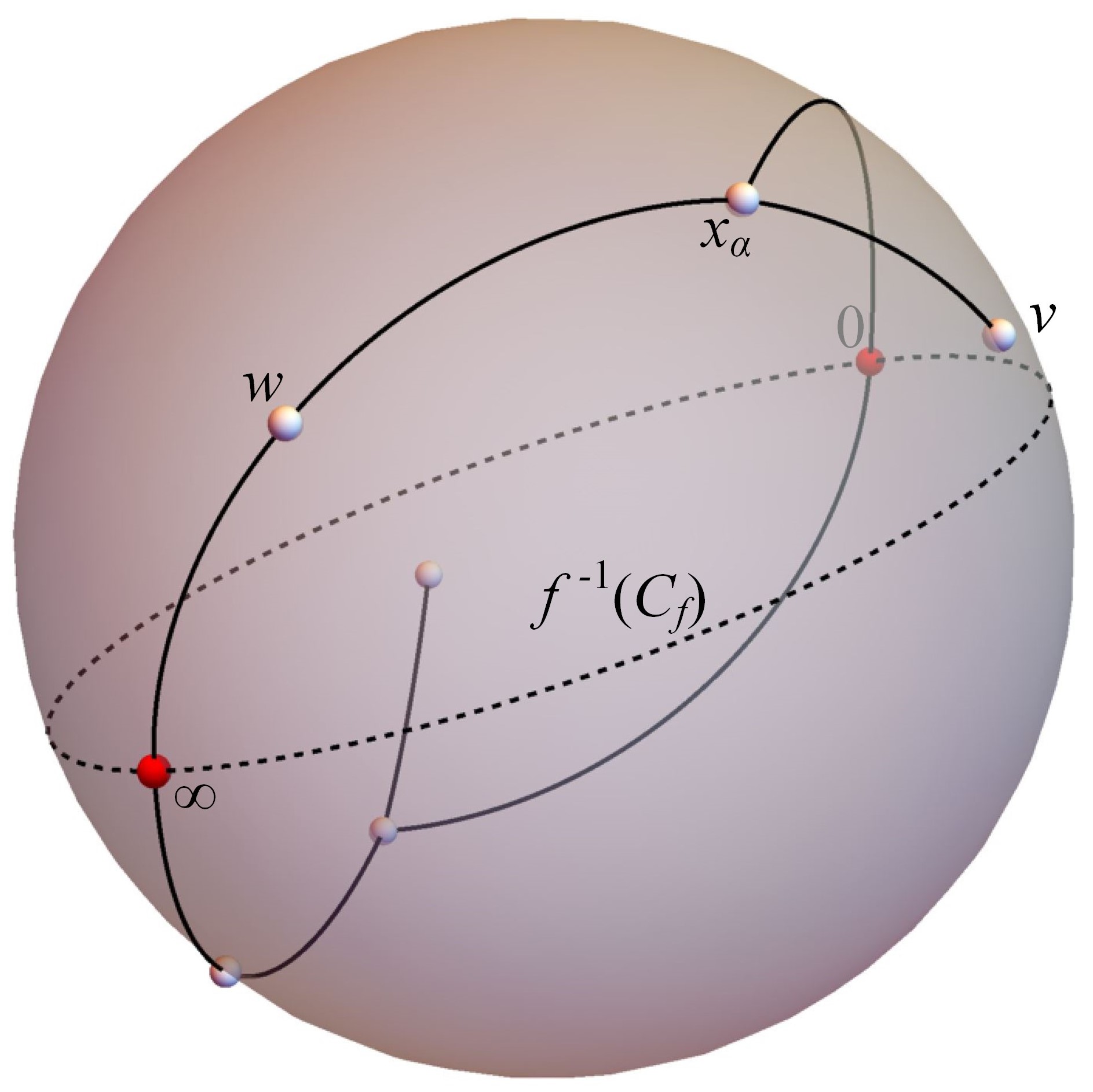}\hspace{1cm}
  \includegraphics[height=5cm]{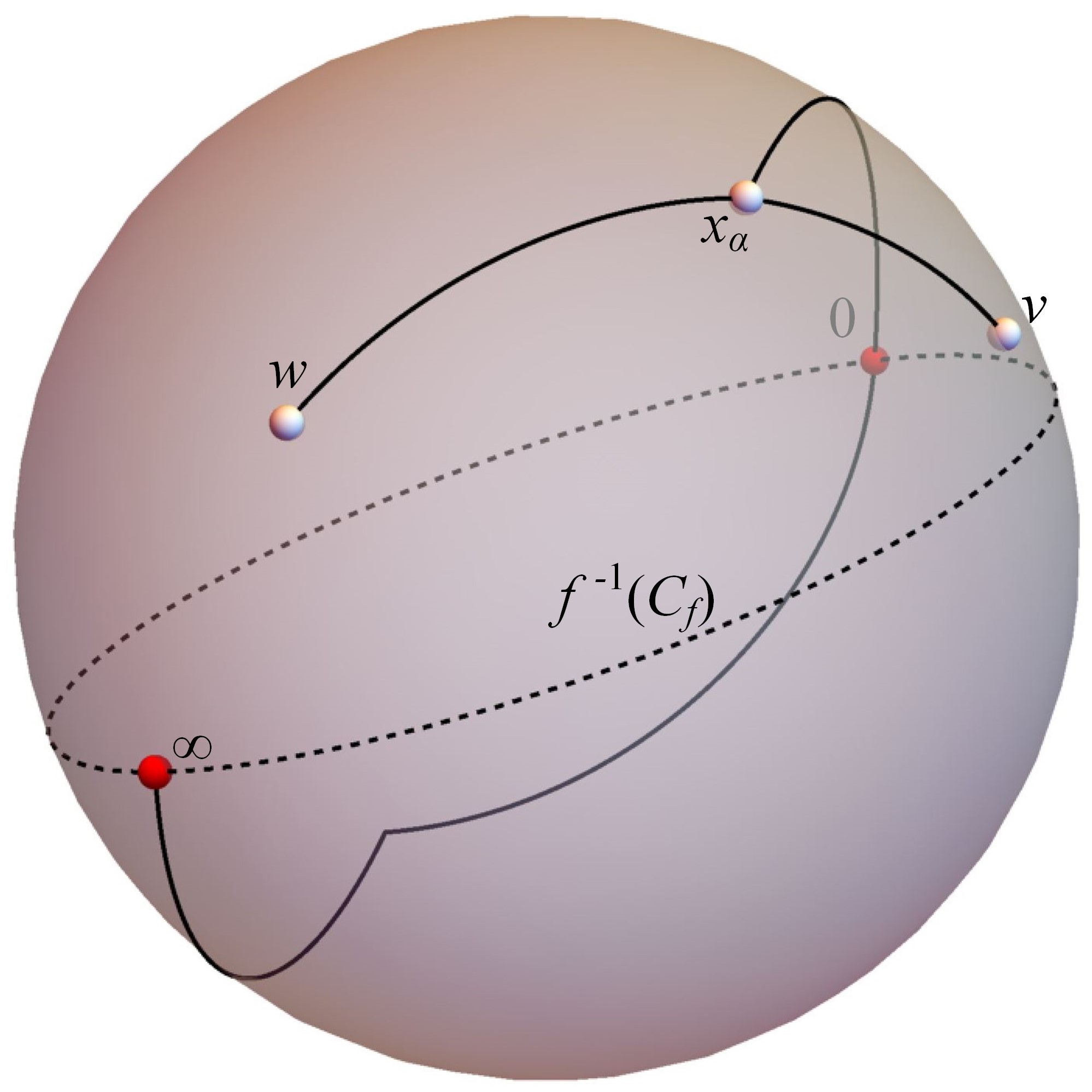}
  \caption{\footnotesize Recovering $G_f=f^{-1}(T_f)$ from $T_f$.
  Top left: we start with $T_f$ and make a cut along a simple curve $C_f$ connecting the critical values outside of $T_f$.
  As $T_f$, we took an invariant spanning tree for the rabbit polynomial $p$; we set $v=p(0)$ and $w=p(v)$.
  Top right: widening the cut, we obtain a hemisphere $U^0_f$ with a copy $T_f^0$ of $T_f$.
  Vertices of $T_f^0$ are labeled as the corresponding vertices of $T_f$.
  Bottom left: attach the opposite hemisphere $U^1_f$ with another copy $T_f^1$ of $T_f$.
  Then $G_f$ is the union of $T_f^0$ and $T_f^1$.
  This construction shows that $G_f$ is uniquely defined as a ribbon graph once $T_f$ is given
  and the critical values are distinguished among the vertices of $T_f$.
  Vertices of $G_f$ are now labeled as they appear in $G_f$
  (new labels are preimages of the former labels).
  Bottom right: we now removed the edges of $G_f$ that do not appear in $T_f$.
  What remains is a tree that identifies with $T_f$
  (one should rotate the sphere and deform the trees to attain the coincidence).
  }
  \label{fig:cut}
\end{figure}

The proof will consist of two steps.
The first step is to define a ribbon graph isomorphism between $f^{-1}(T_f)$ and $g^{-1}(T_g)$.
In other words, if just $T_f$ is given (in which the critical values are marked), then
  $f^{-1}(T_f)$ can be recovered as a ribbon graph, even without knowing $f$.
In order to recover $f^{-1}(T_f)$, a classical construction of the Riemann surface for $f^{-1}$ helps.
(This construction is essentially the same as for the Riemann surface of $z\mapsto \sqrt{z}$.)
We make a cut between two critical values of $f$, and then glue two copies of the slitted sphere along the slits.
If the cut is disjoint from $T_f$ (except the endpoints), then it suffices to see
  how copies of $T_f$ in the two slitted spheres are glued together.
To translate this process to combinatorics, we need some terminology related to cyclic sets and pseudoaccesses.
The next definition follows the terminology of \cite{poi93}.

\begin{dfn}[Pseudoaccess]
Let $A$ be a \emph{cyclic set}, i.e., a set with a distinguished cyclic order of elements.
A \emph{pseudoaccess} of $A$ is an (ordered) pair $(a,b)$ of elements of $A$ such that
 $b$ is the immediate successor of $a$ in the cyclic order.
The terminology is motivated by the following picture.
Suppose that $A$ consists of Jordan arcs in the plane that share an endpoint and are otherwise disjoint
(the cyclic order on $A$ follows the counterclockwise direction around the endpoint).
A Jordan arc disjoint from all elements of $A$ except for the same endpoint defines a pseudoaccess of $A$.
This is illustrated by the figure below, in which the pseudoaccess $(a,b)$ is represented by the dashed segment.
$$
\xymatrix{
{} \ar@{-} `/20pt[r]^c [dr] & {} & {} \ar@{-} [dl]_b \\
{} \ar@{-}[r]^d &{\bullet} \ar@{<--}[r] & {}\\
{} & {} & \ar@{-} `l[l]^a [ul] {}
}
$$
The cyclic set $A$ here is represented by the four arcs $a$, $b$, $c$, $d$ in this cyclic order.
\end{dfn}

\subsection{A homeomorphism between $f^{-1}(T_f)$ and $g^{-1}(T_g)$}
\label{ss:ext-finvT}
Consider a Thurston map $f$ of degree 2 with an invariant spanning tree $T_f$,
and a Thurston map $g$ of degree 2 with an invariant spanning tree $T_g$.
We will work under the assumptions of Theorem A.
In particular, we consider a homeomorphism $\tau:T_f\to T_g$ with the properties listed there.
The first step in the proof of Theorem A is to extend $\tau$ to $G_f=f^{-1}(T_f)$.

Note that we view $G_f$ not only as a subset of $\S^2$ but also as a graph.
Vertices of $G_f$ are defined as preimages of vertices of $T_f$.
Edges of $G_f$ are defined as components of $f^{-1}(T_f-V(T_f))$.

\begin{dfn}[Pseudoaccesses of graphs]
Let $G$ be a graph in the sphere.
A \emph{pseudoaccess} of $G$ at a vertex $a$ is defined as a pseudoaccess of $E(G,a)$.
Here $E(G,a)$ is the cyclic set of all edges of $G$ incident to $a$.
Recall that the cyclic order of edges incident to $a$ is induced by the orientation of $\S^2$.
\end{dfn}

Consider a Jordan arc $C_f$ that connects $v_1(f)$ with $v_2(f)$ and is otherwise disjoint from $T_f$,
  see Figure \ref{fig:cut}, top left.
Then $C_f$ defines two pseudoaccesses of $T_f$, one at each of the critical values.
Since $\tau$ preserves the cyclic order of edges at every vertex,
it defines a correspondence between pseudoaccesses of $T_f$ and pseudoaccesses of $T_g$.
The two pseudoaccesses of $T_f$ defined by $C_f$ give rise to two distinguished pseudoaccesses of $T_g$.
Since $\tau$ maps the critical values of $f$ to the critical values of $g$,
the two distinguished pseudoaccesses of $T_g$ are at $v_1(g)$ and $v_2(g)$.
Clearly, there exists a Jordan arc $C_g$ connecting $v_1(g)$ with $v_2(g)$,
otherwise disjoint from $T_g$ and defining the two distinguished pseudoaccesses of $T_g$.

The set $U_f=\S^2-C_f$ is a disk.
The restriction of $f$ to $f^{-1}(U_f)$ is an unbranched covering since both critical values of $f$ are in $C_f$.
Therefore, $f^{-1}(U_f)$ is a disjoint union of two open disks $U_f^0$ and $U_f^1$.
These disks are shown as hemispheres in Figure \ref{fig:cut}, bottom left.
There is an ambiguity in labeling $U_f^0$ and $U_f^1$.
One of the two disks has to be labeled $U_f^0$, and the other $U_f^1$.
However, which disk gets which label is up to us.
Similarly, $g^{-1}(U_g)$ is a disjoint union of two disks $U_g^0$ and $U_g^1$.
Again, the labeling of these disks should be specified somehow.

The common boundary of the disks $U_f^0$, $U_f^1$ is the Jordan curve $f^{-1}(C_f)$.
Consider the closure $T_f^i$ (in $\S^2$) of the $f$-pullback of $T_f-C_f$ in $U_f^i$, where $i=0,1$.
Clearly, $T_f^i$ is a tree isomorphic to $T_f$.
Moreover, $T_f^i$ and $T_f$ have isomorphic ribbon graph structures.
We may view $T_f^0$ and $T_f^1$ as two copies of $T_f$.
Observe that these two copies are glued at the critical points of $f$ to form the graph $G_f=f^{-1}(T_f)$.
Observe also that the critical points of $f$ are the vertices of $T_f^i$ that correspond,
under the natural isomorphism between $T_f^i$ and $T_f$, to the critical values $v_1(f)$ and $v_2(f)$.
Thus there is an abstract description of the ribbon graph $G_f$.
It involves making two copies of $T_f$ and gluing them at the vertices corresponding to $v_1(f)$, $v_2(f)$.
See again Figure \ref{fig:cut}.
Note that the representation of $G_f$ as a union $T_f^0\cup T_f^1$ depends on the choice of $C_f$.
More precisely, it depends on the choice of the two pseudoaccesses of $T_f$.
A similar representation can be obtained for $G_g=g^{-1}(G_g)$.

\begin{lem}
\label{l:lev-corr}
Either $\tau(T_f\cap T_f^i)\subset T_g\cap T_g^i$ for every $i=0,1$ or
 $\tau(T_f\cap T_f^i)\subset T_g\cap T_g^{1-i}$ for every $i=0,1$.
\end{lem}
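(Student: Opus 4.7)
The plan is to read off the sheet decomposition of $G_f$ from the ribbon graph structure of $\ol T_f$ together with the pseudoaccesses at the critical values defined by $C_f$, and to match this discrete data with the analogous data for $g$ via $\tau$.

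First, I would check that $\tau$ sends $C(T_f)$ into $C(T_g)$. If $c\in C(T_f)$ with $f(c)=v_i(f)$, and $\tau(v_i(f))=v_{\sigma(i)}(g)$, then $\tau\circ f=g\circ \tau$ yields $g(\tau(c))=v_{\sigma(i)}(g)$. Since $g$ has degree $2$, a critical value of $g$ has a unique $g$-preimage (namely the corresponding critical point), so $\tau(c)=c_{\sigma(i)}(g)\in C(T_g)$.

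Next, I would describe the local sheet structure at each $c\in C(T_f)$. Near $c$, the map $f$ is modeled by $z\mapsto z^2$. Let $v=f(c)$, and let $k$ be the valence of $v$ in $T_f$. The $2k$ edges of $\ol T_f$ incident to $c$ split, in cyclic order, into two consecutive blocks of $k$ edges each, one block lying in $T_f^0$ and the other in $T_f^1$. The gaps between these blocks are the two pseudoaccesses of $\ol T_f$ at $c$ that are $f$-preimages of the pseudoaccess of $T_f$ at $v$ defined by $C_f$. The analogous statement holds on the $g$ side with $C_g$. Because $\tau$ preserves cyclic orders at vertices, maps critical values to critical values, and (by our choice of $C_g$) matches the pseudoaccess at $v_i(f)$ defined by $C_f$ with the pseudoaccess at $v_{\sigma(i)}(g)$ defined by $C_g$, it sends the two blocks at $c$ bijectively to the two blocks at $\tau(c)$. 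This produces a bijection $\sigma_c\colon \{0,1\}\to\{0,1\}$ recording how the sheets at $c$ correspond to the sheets at $\tau(c)$.

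The main step, and principal obstacle, is to establish the global consistency $\sigma_{c_1(f)}=\sigma_{c_2(f)}$ in the case $C(T_f)=\{c_1(f),c_2(f)\}$. Since $\ol T_f$ is a tree containing both critical points, the interior of the unique $\ol T_f$-path from $c_1(f)$ to $c_2(f)$ lies in a single component $F$ of $\ol T_f\setminus C(T_f)$, and $F$ is adjacent to both critical points. At every non-critical vertex of $\ol T_f$, all incident edges belong to the same sheet (because at a non-critical vertex of $G_f$, all incident edges of $G_f$ lie in a single closed hemisphere $\ol{U^i_f}$), so $F$ lies in a single sheet, say $T_f^i$. Then $\tau(F)$ lies in a single sheet of $\ol T_g$ which, reading off its adjacency to $\tau(c_1(f))$ and to $\tau(c_2(f))$, must coincide both with $T_g^{\sigma_{c_1(f)}(i)}$ and with $T_g^{\sigma_{c_2(f)}(i)}$; hence $\sigma_{c_1(f)}=\sigma_{c_2(f)}=:\sigma_*$. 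It follows that for every edge $e$ of $\ol T_f$ contained in $T_f^i$, $\tau(e)\subset T_g^{\sigma_*(i)}$, whence $\tau(T_f\cap T_f^i)\subset T_g\cap T_g^{\sigma_*(i)}$. Since $\sigma_*$ is either the identity or the transposition of $\{0,1\}$, this yields the two cases of the lemma. The degenerate cases $|C(T_f)|\le 1$ are handled by the same argument, with the consistency check trivial or vacuous.
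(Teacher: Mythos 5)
Your proof is correct and follows essentially the same strategy as the paper's: both reduce to the facts that $\tau$ sends critical points to critical points, sends the critical pseudoaccesses defined by $C_f$ to those defined by $C_g$, and that the sheet decomposition of $\ol T_f$ into $T_f^0$ and $T_f^1$ is determined by the ribbon structure together with these pseudoaccesses. The only cosmetic difference is that the paper encodes this as a step-by-step propagation of a label function $\ell$ along adjacent edges, whereas you phrase it as local block structures at the critical points plus the explicit consistency check $\sigma_{c_1(f)}=\sigma_{c_2(f)}$ across the component $F$ joining them.
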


Lemma \ref{l:lev-corr} is a manifestation of the fact that the construction shown in Figure \ref{fig:cut} is essentially unique.
Define the \emph{label} $\ell(e)$ of an edge $e\in E(\ol T_f)$ so that $e\subset T_f^{\ell(e)}$.
Thus the label of an edge can take values $0$ or $1$.
Labels are defined on edges of $\ol T_f$ and on edges of $f^{-1}(T_f)$ but may not be well-defined on edges of $T_f$.
(Recall that $\ol T_f$ was defined above as a subdivision of $T_f$, in which critical points of $f$ in $T_f$ become vertices.)
Lemma \ref{l:lev-corr} asserts that $\tau$ either preserves all labels or reverses all labels.
We will choose the labeling of $U_g^0$, $U_g^1$ so that all labels are preserved by $\tau$.

\begin{proof}[Proof of Lemma \ref{l:lev-corr}]
We have to show that, if $\tau(e_r)\subset T_g^{\ell(e_r)}$ for some $e_r\in E(\ol T_f)$,
 then the same holds for every edge $e$ of $\ol T_f$.
In other words, $\tau$ preserves all labels.
To this end, we compare every edge $e$ with $e_r$.
The latter will be called the \emph{reference edge}.
Consider critical points of $f$ in $\ol T_f$.
Note, however, that $T_f$ does not have to contain all critical points of $f$.
Suppose that some critical point $c$ (which is necessarily $c_1(f)$ or $c_2(f)$) lies in $\ol T_f$.
Let $v=f(c)$ be the corresponding critical value.
The curve $f^{-1}(C_f)$ defines two pseudoaccesses of $\ol T_f$ at $c$, not necessarily different.
We will call these distinguished pseudoaccesses \emph{critical pseudoaccesses}.
Clearly, critical pseudoaccesses depend only on the ribbon graph structure of $\ol T_f$ and
 on the choice of $C_f$, more precisely, on the two pseudoaccesses defined by $C_f$.
The latter two pseudoaccesses will be referred to as \emph{post-critical pseudoaccesses}.
The two critical pseudoaccesses of $\ol T_f$ at $c$ may separate some pairs of edges incident to $c$.

The values of the function $\ell$ can be computed step by step,
	starting at $e_{r}$ and passing from edges to adjacent edges.
Suppose that $\ell(e)$ is known, and $e'$ shares a vertex $a$ with $e$.
If $a$ is not critical, then $\ell(e')=\ell(e)$.
If $a$ is critical, then $\ell(e')\ne \ell(e)$ if and only if $e$ and $e'$ are separated by the critical pseudoaccesses at $a$.
The just described computational description of $\ell$ follows from the observation that
  edges $e$, $e'\in E(T_f,a)$ are separated by the critical pseudoaccesses at $a$
  if and only if they are separated by $f^{-1}(C_f)$, i.e., lie in different components of $f^{-1}(U_f)$.

We now need to prove that $\ell(\tau(e))=\ell(e)$.
To this end, it is enough to observe that $\tau$ maps $C(T_f)$ to $C(T_g)$ and that $\tau$ maps
 critical pseudoaccesses of $\ol T_f$ to critical pseudoaccesses of $\ol T_g$.
Indeed, suppose that $c$ is a critical point in $\ol T_f$.
Then $v=f(c)$ is a critical value, and $\tau(v)$ is also a critical value by property $(3)$ of $\tau$
	listed in the statement of Theorem A.
On the other hand, by property $(2)$ of $\tau$, we have $\tau(v)=\tau\circ f(c)=g(\tau(c))$.
Since $g(\tau(c))$ is a critical value, and $g$ has degree two, $\tau(c)$ is a critical point.
Thus, $\tau$ maps critical points of $f$ in $T_f$ to critical points of $g$ in $T_g$. 
For $c\in C(T_g)$, the critical pseudoaccesses at $c$ map under $\tau$ to critical pseudoaccesses at $\tau(c)$.
This follows from our assumption that $\tau$ preserves the cyclic order between edges of $f^{-1}(T_f)$ incident to $c$. 
We conclude that $\tau$ preserves the labels, as desired.
\end{proof}

Recall that $G_f=f^{-1}(T_f)$ and $G_g=g^{-1}(T_g)$.
Recall also that $V(G_f)\supset V(\ol T_f)\supset V(T_f)$, and similarly for $g$.

\begin{prop}
 \label{p:ext-finvT}
 There is a ribbon graph isomorphism $\tau_*:G_f\to G_g$ with the following properties:
 \begin{enumerate}
  \item We have $\tau_*=\tau$ on $V(\ol T_f)$.
  \item We have $\tau_*\circ f=g\circ\tau_*$ on all vertices of $G_f$.
 \end{enumerate}
\end{prop}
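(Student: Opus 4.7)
The plan is to lift $\tau$ to $G_f=f^{-1}(T_f)$ through the two sheets of the decomposition $G_f=T_f^0\cup T_f^1$ described before Lemma \ref{l:lev-corr}. Concretely, let $f_i:=f|_{T_f^i}\colon T_f^i\to T_f$ be the ribbon graph isomorphism induced by the covering, and let $g_i\colon T_g^i\to T_g$ be defined analogously; by Lemma \ref{l:lev-corr}, we may choose the labels on $U_g^0$, $U_g^1$ so that $\tau$ preserves edge labels, and we fix this choice once and for all. We then set
$$\tau_*|_{T_f^i}:=g_i^{-1}\circ\tau\circ f_i,\quad i=0,1.$$

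The first thing to verify is that the two formulas agree on the overlap $T_f^0\cap T_f^1=C(T_f)$. For $c\in C(T_f)$, conditions (2) and (3) of Theorem A yield $\tau(f(c))=g(\tau(c))$, a critical value of $g$ whose unique $g$-preimage is $\tau(c)$; so both formulas return $\tau(c)$ at $c$, and $\tau_*\colon G_f\to G_g$ is a well-defined bijection.

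Next I would verify the numbered properties. For (1), any $x\in V(\ol T_f)$ lies in some $T_f^i$, and label preservation places $\tau(x)\in T_g^i$, so $g_i$ applies to $\tau(x)$; together with condition (2) of Theorem A this gives
$$\tau_*(x)=g_i^{-1}(\tau(f(x)))=g_i^{-1}(g(\tau(x)))=\tau(x).$$
For (2), given $x\in V(G_f)$ with $x\in T_f^i$, we have $g(\tau_*(x))=\tau(f(x))$ directly from the definition, while $\tau_*(f(x))=\tau(f(x))$ by (1), since $f(x)\in V(T_f)\subset V(\ol T_f)$.

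The hard part will be checking that $\tau_*$ is a ribbon graph isomorphism at critical vertices. At a non-critical vertex this is automatic since $\tau_*|_{T_f^i}$ is itself a ribbon graph isomorphism of $T_f^i$ with $T_g^i$. At a critical point $c\in C(f)\cap T_f$, however, the edges of $G_f$ incident to $c$ come from both $T_f^0$ and $T_f^1$, and their cyclic order is the ``doubling'' of the cyclic order of the edges at $v=f(c)$ in $T_f$, interleaved according to where the two critical pseudoaccesses at $c$ fall. The proof of Lemma \ref{l:lev-corr} shows that $\tau$ maps post-critical pseudoaccesses to post-critical pseudoaccesses and hence, via the local model $z\mapsto z^2$, critical pseudoaccesses to critical pseudoaccesses; together with the $\tau$-equivariance of the cyclic order at $v$, this delivers cyclic-order preservation at $c$ and concludes the argument.
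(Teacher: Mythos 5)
Your proof is correct and takes essentially the same route as the paper: you lift $\tau$ sheet by sheet via $g_i^{-1}\circ\tau\circ f_i$, matching sheets using Lemma~\ref{l:lev-corr}, and then verify properties (1), (2), and cyclic-order preservation at critical vertices by the same computations (your well-definedness check is a small bonus the paper silently omits). The one slip is that the overlap $T_f^0\cap T_f^1$ consists of \emph{all} critical points of $f$, not just those in $C(T_f)$; at a critical point $c\notin T_f$, where $\tau(c)$ is undefined, the two formulas still agree because both return the unique $g$-preimage of the critical value $\tau(f(c))$.
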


\begin{proof}
Recall that, by Lemma \ref{l:lev-corr}, the map $\tau:T_f\to T_g$ preserves labels.
This map lifts to $T_f^i$, where $i=0,1$, by the homeomorphisms $f:T_f^i\to T_f$ and $g:T_g^i\to T_g$.
In other words, we can define a map $\tau^i:T_f^i\to T_g^i$ by the formula $\tau^i=g^{-1}_i\circ\tau\circ f$,
where $g^{-1}_i$ is the inverse of $g:T_g^i\to T_g$.
We set $\tau_*$ to be the map from $G_f$ to $G_g$, whose restriction to $T_f^i$ is $\tau^i$.
Then we need to prove that properties $(1)-(2)$ hold for $\tau_*$.

Let us first prove that $\tau_*=\tau$ on $V(\ol T_f)$.
On $V(\ol T_f)\cap T_f^i$, the map $\tau$ satisfies the property $\tau\circ f=g\circ\tau$ by the assumptions of Theorem A.
By Lemma \ref{l:lev-corr}, under $\tau$ the set $T_f\cap T_f^i$ maps to $T_g\cap T_g^i$
(note that $\ol T_f=T_f$ as sets, and hence $\ol T_f\cap T_f^i=T_f\cap T_f^i$ set-theoretically).
Therefore, we have $\tau=g^{-1}_i\circ\tau\circ f$ on $V(\ol T_f)\cap T_f^i$.
It remains to note that the right hand side coincides with the definition of $\tau^i$.

We now prove that the cyclic order of edges incident to a vertex $a_*\in V(G_f)$ is preserved by $\tau_*$.
Let $f^{-1}_i$ be the inverse of $f:T_f^i\to T_f$.
If $a_*=f^{-1}_i(a)$, where $a$ is not a critical value, then the statement is obvious since both
$f:U_f^i\to \S^2$ and $g:U_g^i\to \S^2$ preserve the orientation.
Now, if $a$ is a critical value, then the restriction of $\tau_*$ to the union of edges incident to $a_*$
is glued from the two maps $\tau^0$ and $\tau^1$.
The cyclic order of edges of $G_f$ at $a_*$ is as follows.
First come all edges of $T_f^0$ incident to $a_*$ that are mapped by $\tau^0$ in an order preserving fashion.
Then come all edges of $T_f^1$ incident to $a_*$ that are mapped by $\tau^1$ in an order preserving fashion.
It follows that $\tau_*$ preserves the cyclic order on edges of $G_f$ incident to $a_*$.

It remains to prove that $\tau_*\circ f=g\circ\tau_*$ on all vertices of $G_f$.
Indeed, let $a_*$ be a vertex of $G_f$.
Then $a_*=f^{-1}_i(a)$ for $i=0$ or $1$.
We have
$$
\tau_*\circ f(a_*)=\tau(a)=g\circ \tau^i\circ f^{-1}_i(a)=g\circ\tau_*(a_*).
$$
In the first equality, we used that $\tau^*=\tau$ on $V(T_f)$.
In the second equality, we used the definition of $\tau^i$.
\end{proof}

\subsection{An extension of $\tau$ to the sphere}
\label{ss:tau-ext}
We keep the notation of Theorem A.
Consider the homeomorphism $\tau_*:G_f\to G_g$ constructed in Proposition \ref{p:ext-finvT}.
The restriction of $\tau_*$ to $\ol T_f$ is in general different from $\tau$.
However, these two maps match on $V(\ol T_f)=V(T_f)\cup C(T_f)$.
Moreover, $\tau_*$ restricted to $T_f$ also satisfies assumptions $(1)$--$(3)$ of Theorem A.
Thus we may consider $\tau_*$ in place of $\tau$.

We will now extend $\tau_*$ to the entire sphere.
Such an extension is possible due to the following result.

\begin{thm}[Corollary 6.6 of \cite{BBH92}]
\label{t:graph-ext}
 Let $G$ and $G'$ be two connected graphs embedded into $\S^2$.
 Consider a homeomorphism $h:G\to G'$ that induces an isomorphism of ribbon graphs.
 Then there is an orientation preserving homeomorphism $h_*:\S^2\to\S^2$ whose restriction to $G$ is $h$.
\end{thm}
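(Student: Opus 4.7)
The plan is to extend $h$ one complementary face at a time. Since $G$ is connected and embedded in $\S^2$, each connected component of $\S^2-G$ is an open topological disk; this is standard, and follows for instance from Euler's formula together with the observation that a connected graph in $\S^2$ admits a CW-structure whose 2-cells are its faces. Denote the faces of $G$ by $F_1,\dots,F_k$, and similarly denote the faces of $G'$ by $F_1',\dots,F_{k'}'$.

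The next step is to use the ribbon graph structure to describe the faces combinatorially. Around each vertex $v$, the cyclic order on $E(G,v)$ determines, for every incoming oriented edge-end at $v$, a unique ``next'' outgoing edge-end (say, the immediate successor in the cyclic order). Iterating this rule starting from any oriented edge traces out a closed combinatorial walk which is exactly the boundary walk of one face; the faces of $G$ are in bijection with the orbits of this rule on oriented edges. Since $h:G\to G'$ is a ribbon graph isomorphism, it conjugates the successor rule on $G$ to the one on $G'$, so it induces a bijection $F_j\leftrightarrow F_{\pi(j)}'$ between faces together with a homeomorphism $\partial F_j\to\partial F_{\pi(j)}'$ of their topological boundaries (traversed as cyclic walks) that agrees with $h$ on $G$. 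In particular $k=k'$.

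For each face $F_j$, the closure $\overline{F_j}$ is a closed topological disk, and $h$ restricts to a homeomorphism $\partial F_j\to \partial F_{\pi(j)}'$ of their boundary circles. By the Schoenflies theorem, any homeomorphism between the boundaries of two closed disks extends to a homeomorphism of the disks, and after possibly pre-composing with a reflection we may choose the extension to be orientation preserving with respect to the orientations the faces inherit from $\S^2$. Define $h_*$ on $\overline{F_j}$ to be any such extension; since all the face-wise extensions agree with $h$ on $G$, they glue to a well-defined continuous bijection $h_*:\S^2\to\S^2$, which is a homeomorphism by compactness.

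The main obstacle is to make sure these face-wise extensions can be chosen \emph{coherently} orientation preserving. This is where the ribbon hypothesis is used in an essential way: at every vertex $v$ the map $h$ sends the cyclic order on $E(G,v)$ to the cyclic order on $E(G',h(v))$, so on a small disk neighborhood of $v$ cut into sectors by the incident edges, $h$ permutes sectors in an orientation preserving manner. Consequently, a face $F_j$ adjacent to $v$ through a given sector maps to a face $F_{\pi(j)}'$ adjacent to $h(v)$ through the corresponding sector with matching local orientation, so the Schoenflies extension on $\overline{F_j}$ can indeed be taken orientation preserving. Once this is checked at each vertex, $h_*$ is locally orientation preserving at every point of $\S^2$ and hence globally orientation preserving, completing the proof.
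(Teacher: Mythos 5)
The paper does not prove this statement; it cites it as Corollary~6.6 of \cite{BBH92}, so there is no in-paper proof to compare against. Evaluating your argument on its own: the overall plan (extend over complementary faces, with the ribbon structure controlling orientations) is indeed the right idea, and it is essentially what the cited result rests on. However, there is a genuine gap in the execution.

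The step ``the closure $\overline{F_j}$ is a closed topological disk, and $h$ restricts to a homeomorphism $\partial F_j\to\partial F_{\pi(j)}'$ of their boundary circles'' is false in general, so the Schoenflies application as written does not apply. The counterexample is the one most relevant to this paper: take $G=T$ a spanning tree. Then $\S^2-T$ is a single face; its closure is all of $\S^2$, not a closed disk, and its topological boundary is the tree $T$, not a circle. More generally, whenever the boundary circuit of a face traverses an edge twice (which happens whenever $G$ has a bridge), the closure of that face is not a manifold with boundary and $\partial F_j$ is not a circle. You already observe in your second paragraph that a boundary walk is a closed \emph{combinatorial} walk that may revisit edges, but the third paragraph silently upgrades this walk to a topological circle bounding a disk, which is where the argument breaks down.

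The standard repair is to work with the CW-structure on $\S^2$ determined by $G$: for each face $F$ there is an abstract closed disk $D_F$ and a characteristic map $q_F:D_F\to\overline F$ which is a homeomorphism from the open disk onto $F$ and whose restriction to $\partial D_F$ parametrizes the boundary circuit of $F$ (possibly covering an edge of $G$ twice, once from each side). The ribbon-isomorphism hypothesis lets you lift $h$ to a homeomorphism $\partial D_F\to\partial D_{F'}$ covering $h$ on boundary circuits, and Alexander's trick (coning) then extends it to $D_F\to D_{F'}$. One then verifies that these extensions descend along $q_F$, $q_{F'}$ and glue across shared edges of $G$; this is precisely where the ribbon structure is needed, to guarantee that the two sides of each edge are matched coherently. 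Your orientation discussion in the final paragraph is then fine, but it must be phrased in terms of the abstract disks $D_F$, not the face closures. As written, the proof does not go through.
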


Applying Theorem \ref{t:graph-ext} to our specific situation, we obtain the following corollary.

\begin{cor}
 \label{c:tau-ext}
 Suppose that $\tau_*:G_f\to G_g$ satisfies the properties listed in Proposition \ref{p:ext-finvT}.
 Then $\tau_*$ extends to an orientation preserving homeomorphism $\tau_*:\S^2\to\S^2$.
\end{cor}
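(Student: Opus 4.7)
The plan is to apply Theorem \ref{t:graph-ext} directly to the map $\tau_*:G_f\to G_g$ produced by Proposition \ref{p:ext-finvT}. The hypotheses of Theorem \ref{t:graph-ext} are twofold: one needs that both source and target are connected graphs embedded in $\S^2$, and that the given homeomorphism induces a ribbon graph isomorphism. The latter is exactly what Proposition \ref{p:ext-finvT} delivers, together with the refinement that $\tau_*$ agrees with $\tau$ on $V(\ol T_f)$ and conjugates $f$ to $g$ on all vertices of $G_f$. So the only work left is checking connectedness.

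First, I would verify that $G_f$ is connected. Using the decomposition from Section \ref{ss:ext-finvT}, we have $G_f=T_f^0\cup T_f^1$, where each $T_f^i$ is a tree isomorphic to $T_f$, and the two copies are identified exactly at the preimages of the critical values. Since $T_f$ is a spanning tree for $P(f)$, it contains $v_1(f)$ and $v_2(f)$; hence $f^{-1}(\{v_1(f),v_2(f)\})\subset G_f$, and these preimages (the critical points of $f$) are precisely the points where $T_f^0$ and $T_f^1$ meet. Thus $G_f$ is the union of two connected sets sharing at least one point, and is therefore connected. The identical argument applies to $G_g$.

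With both hypotheses in place, Theorem \ref{t:graph-ext} produces an orientation preserving self-homeomorphism of $\S^2$ whose restriction to $G_f$ coincides with $\tau_*$; this is the desired extension, which I would continue to denote by $\tau_*$.

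There is no substantial obstacle here: the content of the corollary is entirely encoded in the preceding proposition and the cited extension theorem, and the only genuine verification is the short topological remark that $G_f$, $G_g$ are connected, which follows immediately from the ``two copies of $T_f$ glued at the critical values'' description of $f^{-1}(T_f)$.
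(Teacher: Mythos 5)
Your proof is correct and follows the same route as the paper: the paper introduces the corollary with the remark that it is obtained by ``applying Theorem \ref{t:graph-ext} to our specific situation,'' which is precisely your plan. Your explicit verification that $G_f$ and $G_g$ are connected (via the decomposition $G_f = T_f^0 \cup T_f^1$ with the two trees glued at the critical points) is a detail the paper leaves implicit, and it is accurate.
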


The homeomorphism $\tau_*$ maps complementary components of $G_f$ to complementary components of $G_g$.
The following notion helps to say which components are mapped to which components in combinatorial terms:

\begin{dfn}[Boundary Circuits]
Let $G$ be a graph in $\S^2$.
If an orientation of an edge $e\in E(G)$ is fixed, then $e$ is called an \emph{oriented edge} of $G$.
The endpoints of $e$ form an ordered pair $(a,b)$, where $a$ is the \emph{initial endpoint} and
$b$ is the \emph{terminal endpoint} of $e$.
We also say that $e$ \emph{originates} at $a$ and \emph{terminates} at $b$.
The same edge equipped with different orientations gives rise to two different oriented edges.
A \emph{boundary circuit} of $G$ (also known as a left-turn path in $G$) is a cyclically ordered sequence $[e_0,\dots,e_{n-1}]$ of oriented edges of $G$ with the following property: if $e_i$ terminates at a vertex $a$, then $e_{i+1\pmod n}$ originates at $a$, and $e_{i+1\pmod n}$ is the immediate \textbf{predecessor} of $e_i$ in the cyclic order on $E(G,a)$.
Clearly, any oriented edge belongs to some boundary circuit.
\end{dfn}

As above, let $W$ be some complementary component of $G$.
There is a boundary circuit $\Sigma_W=[e_0,\dots,e_{n-1}]$ associated with $W$.
Informally, it is obtained by tracing the boundary of $W$ counterclockwise.
The correspondence $W\mapsto \Sigma_W$ between components of $\S^2-G$ and boundary circuits of $G$ is one-to-one.
Observe that the same edge may enter $\Sigma_W$ twice with different orientations.
Observe also that the rotation from $e_k$ to $e_{k+1}$ around the terminal point of $e_k$ is clockwise.

\subsection{Homotopy}
Theorem A will be deduced from Theorem \ref{t:ThmA-def} stated below.
Theorem \ref{t:ThmA-def} is not new:
Proposition 3.4.3 of \cite{H17} contains a more general fact; it is based in turn on a similar statement from \cite{BM17}.
However, since notation and terminology in \cite{BM17,H17} are somewhat different, we sketch a proof here.
The proof will be based on a technical lemma from \cite{BBH92}.

\begin{thm}
 \label{t:ThmA-def}
 Suppose that two Thurston maps $f$ and $g$ of degree two share an invariant spanning tree $T$.
 Moreover, suppose that $f^{-1}(T)=g^{-1}(T)=G$, that $f=g$ on $V(G)$, and that
  the critical values of $f$ coincide with the critical values of $g$.
 Then there is an orientation preserving homeomorphism $\psi$ isotopic to the identity relative to $V(T)$
 and such that $f=g\circ\psi$.
\end{thm}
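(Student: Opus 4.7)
The plan is to build $\psi$ directly by pulling back the coincidence $f=g$ on $V(G)$ face-by-face and edge-by-edge, and then to show the resulting self-homeomorphism is isotopic to the identity by standard extension arguments.

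First I would unpack the combinatorial structure of $G$. Since $T$ is a tree, $\S^2\setminus T$ is a single open disk, and because every critical value lies in $V(T)$, the restrictions $f\colon\S^2\setminus G\to\S^2\setminus T$ and $g\colon\S^2\setminus G\to\S^2\setminus T$ are unbranched degree-$2$ covers of a simply connected set. Hence $\S^2\setminus G$ has exactly two components $W_0$, $W_1$, each mapped homeomorphically onto $\S^2\setminus T$ by both $f$ and $g$. Similarly, each edge $e$ of $G$ is mapped homeomorphically onto an edge of $T$ by $f$ and by $g$; since $f=g$ on the endpoints of $e$ and $T$ is a tree, one obtains $f(e)=g(e)$ as edges of $T$.

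Next I would define $\psi$ piece-by-piece. On each edge $e\subset G$, set $\psi|_e:=(g|_e)^{-1}\circ f|_e$, which is a self-homeomorphism of $e$ fixing the endpoints. On each face $W_i$, set $\psi|_{W_i}:=(g|_{W_i})^{-1}\circ f|_{W_i}$, a self-homeomorphism of $W_i$. Continuity of $f$ and $g$ forces these pieces to agree on shared boundaries, so they assemble into a self-homeomorphism $\psi\colon\S^2\to\S^2$ that fixes $V(G)\supset V(T)$ pointwise, preserves $T$, and satisfies $g\circ\psi=f$ by construction; it is orientation preserving because it is built from compositions of orientation-preserving local homeomorphisms.

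The remaining task is to isotope $\psi$ to the identity relative to $V(T)$. Because $\psi$ fixes $V(T)$ and carries each edge of $T$ homeomorphically onto itself fixing its endpoints, $\psi|_T$ is isotopic to $\mathrm{id}_T$ rel $V(T)$ (each edge is just an arc). I would extend this to an ambient isotopy of $\S^2$ fixing $V(T)$ by invoking an isotopy-extension lemma of the type underlying Corollary 6.6 of \cite{BBH92}; after this preliminary isotopy one may assume $\psi=\mathrm{id}$ on $T$. What then remains is a self-homeomorphism of $\S^2$ fixing $T$ pointwise and preserving each of $W_0$, $W_1$; an application of Alexander's trick on each such closed disk yields an isotopy to the identity rel $T$, and concatenating the two isotopies produces the desired isotopy of $\psi$ to $\mathrm{id}$ rel $V(T)$.

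The main obstacle I expect is the packaging of this final step. Isotoping $\psi|_T$ to the identity on $T$ is elementary, but lifting to a sphere isotopy and then killing the complementary action via Alexander's trick requires some care because the closures of $W_0$ and $W_1$ in $\S^2$ are not embedded closed disks (their boundary circles map $2$-to-$1$ onto boundary circuits of $G$). The clean solution is to apply Alexander's trick on the abstract closed-disk compactification of each $W_i$, which is exactly the type of technical lemma from \cite{BBH92} referenced in the paragraph preceding the theorem.
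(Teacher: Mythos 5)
Your proposal and the paper take different routes to the same conclusion, though they share a philosophy. The paper's proof is essentially a one-line reduction: it observes that $f,g\colon G\to T$ are graph maps (in the sense of \cite{BBH92}) admitting regular extensions, that $f=g$ on $V(G)$, and that $f(e)=g(e)$ for every $e\in E(G)$, and then invokes Corollary 6.3 of \cite{BBH92} (stated in the paper as Theorem~\ref{t:ext-hom}) as a black box. That corollary produces $\psi$ with $f=g\circ\psi$ and, in fact, isotopic to the identity rel the larger set $V(G)$, which is more than the statement demands. Your proposal, by contrast, unpacks what a proof of that cited lemma would look like in this special case: build $\psi$ edge-by-edge and face-by-face as the branch of $g^{-1}\circ f$ determined by the decomposition $\S^2\setminus G=W_0\sqcup W_1$, then kill $\psi$ by a two-stage isotopy (first straighten $\psi|_T$ rel $V(T)$ and extend ambiently, then apply Alexander's trick on the abstract closed-disk compactification of the complement). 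This is a legitimate and more self-contained route; it buys you independence from the exact form of \cite{BBH92} at the price of carrying the technical burden yourself.

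Two places in your write-up need tightening. First, the assertion that the piecewise definitions of $\psi$ "agree on shared boundaries" by "continuity of $f$ and $g$" is where the real content lives: at a point $x$ on an edge of $G$ the two adjacent face-branches and the edge-branch of $g^{-1}\circ f$ must be checked to coincide (this is a local lifting argument, straightforward away from critical points but requiring an explicit sector-by-sector check at a critical point of $f$, where you should first note that $C(f)=C(g)$ follows from $f=g$ on $V(G)$ together with the coincidence of critical values). Second, in the final Alexander-trick step you speak of applying the trick "on each of $W_0$, $W_1$," but $W_0,W_1$ are the two faces of $G$, whereas after you have isotoped $\psi$ to fix $T$ pointwise the relevant object is the single face of $T$: since $T$ is a tree, $\S^2\setminus T$ is one open disk, and Alexander's trick is applied once to its abstract closed-disk compactification (whose boundary circle maps to the boundary circuit of $T$). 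That slip is harmless here, but keeping $G$- and $T$-complements distinct avoids confusion, especially since the ambient isotopy extending $\psi|_T\simeq\mathrm{id}_T$ need not preserve $G$ at all, and indeed the theorem only requests an isotopy rel $V(T)$, not rel $V(G)$. Finally, your justification that $f(e)=g(e)$ is correct, and is worth stating with the observation that $T$ is a tree so (i) $e$ cannot map to a loop, hence the two endpoints of $e$ have distinct images under $f$ (equivalently $g$), and (ii) two distinct vertices of a tree bound at most one edge.
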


Note that the equality $f^{-1}(T)=g^{-1}(T)$ means the equality of graphs rather than just sets.
In particular, we assume that the two graphs have the same vertices.
Note also that all critical points of $f$ are among vertices of these graphs.
Theorem \ref{t:ThmA-def} implies that $f$ and $g$ are Thurston equivalent.
In fact, they are even homotopic.

\begin{proof}[Proof of Theorem A using Theorem \ref{t:ThmA-def}]
Let $f$, $g$ and $\tau:T_f\to T_g$ be as in Theorem A.
As before, set $G_f=f^{-1}(T_f)$ and $G_g=g^{-1}(T_g)$.
By Proposition \ref{p:ext-finvT}, there is a homeomorphism $\tau_*:G_f\to G_g$
 that induces an isomorphism of ribbon trees and is such that
\begin{enumerate}
 \item we have $\tau_*=\tau$ on $V(T_f)$;
 \item we have $g\circ\tau_*=\tau_*\circ f$ on all vertices of $G_f$.
\end{enumerate}
Replacing $\tau$ with $\tau_*$ if necessary, we may assume that $\tau$ satisfies these properties.
In particular, $\tau$ maps $G_f$ to $G_g$.

By Corollary \ref{c:tau-ext}, the map $\tau$ extends to an orientation preserving homeomorphism $\tau:\S^2\to\S^2$.
Set $g_*=\tau^{-1}\circ g\circ\tau$.
Clearly, this is a Thurston map of degree two.
Then $T_f$ is an invariant spanning tree for $g_*$.
Since $\tau$ maps the critical values of $f$ to the critical values of $g$,
  the maps $f$ and $g_*$ share the critical values.
Finally,
$$
g_*^{-1}(T_f)=\tau^{-1}\circ g^{-1}\circ\tau(T_f)=\tau^{-1}(G_g)=G_f=f^{-1}(T_f).
$$
Thus all assumptions of Theorem \ref{t:ThmA-def} hold for $f$ and $g_*$.
By Theorem \ref{t:ThmA-def}, the map $g_*$ is
homotopic to $f$.
Since $g$ is topologically conjugate to $g_*$, we conclude that $g$ is Thurston equivalent to $f$.
\end{proof}

Consider two graphs $G$ and $T$ in the sphere.
Let $h:G\to T$ be a continuous map that is injective on the edges of $G$ and
is such that the forward and inverse images of the vertices are vertices.
Such a map is called a \emph{graph map} in \cite{BBH92}.
Suppose that a graph map $h$ has an extension $\ol h:\S^2\to\S^2$.
If $\ol h$ is an orientation preserving branched covering injective on every complementary component of $G$, then $\ol h$ is called a \emph{regular extension} of $h$.
This terminology also follows \cite{BBH92}.

\begin{thm}[Corollary 6.3 of \cite{BBH92}]
  \label{t:ext-hom}
  Consider two graph maps $h$, $h':G\to T$ admitting regular extensions $\ol h$, $\ol h'$.
  Suppose that $h=h'$ on $V(G)$ and $h(e)=h'(e)$ for every $e\in E(G)$.
  Then there is a homeomorphism $\psi:\S^2\to\S^2$ such that $\ol h=\ol h'\circ\psi$, and
  $\psi$ is isotopic to the identity relative to $V(G)$.
\end{thm}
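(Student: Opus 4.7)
The plan is to build $\psi$ via the formal identity $\psi = (\ol h')^{-1}\circ \ol h$, which makes pointwise sense wherever we can invert $\ol h'$ locally. Since $\ol h'$ is a regular extension, it is injective on every edge of $G$ and on every complementary component of $G$, so this formal definition can be realized piece by piece. I would carry out the construction first on $G$, then on each closed complementary disk $\ol W$, check that the pieces agree on overlaps, and then produce the required isotopy rel $V(G)$.

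On $G$, fix an edge $e\in E(G)$ with endpoints $a,b$. By hypothesis $h(e)=h'(e)$ is a single edge $e'$ of $T$, and the restrictions $h|_e,h'|_e\colon e\to e'$ are homeomorphisms that agree on $\{a,b\}$. Setting $\psi|_e=(h'|_e)^{-1}\circ h|_e$ gives a homeomorphism of $e$ fixing $a,b$ and satisfying $h=h'\circ\psi$ on $e$. These edge definitions glue to a homeomorphism $\psi\colon G\to G$ that is the identity on $V(G)$ and intertwines $h$ with $h'$.

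Next I would extend $\psi$ across the complementary regions. For each component $W$ of $\S^2\sm G$ the restriction $\ol h|_{\ol W}$ is a homeomorphism of $\ol W$ onto the closure of some component $W'$ of $\S^2\sm T$, and similarly for $\ol h'|_{\ol W}$; the key point I would have to verify is that both produce the \emph{same} component $W'$. This follows from the fact that the boundary circuit $\Sigma_W$, together with the assignment of edges of $G$ to edges of $T$ (which is the same for $h$ and $h'$ by assumption), determines a unique boundary circuit of $T$ that bounds $W'$, and that orientation preservation of $\ol h,\ol h'$ pins down the side. With this in hand, I set $\psi|_{\ol W}=(\ol h'|_{\ol W})^{-1}\circ \ol h|_{\ol W}$, a self-homeomorphism of $\ol W$ which on $\d W$ reduces to $(h'|_{\d W})^{-1}\circ h|_{\d W}$ and hence agrees with the previously defined $\psi|_G$. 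Piecing everything together yields an orientation preserving homeomorphism $\psi\colon \S^2\to\S^2$ satisfying $\ol h=\ol h'\circ \psi$ tautologically.

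To upgrade $\psi$ to an isotopy rel $V(G)$, I would first isotope $\psi|_e$ to $\mathrm{id}_e$ rel $\{a,b\}$ on each edge; any two orientation preserving homeomorphisms of an interval agreeing on endpoints are canonically isotopic rel endpoints, and these edgewise isotopies assemble into a global ambient isotopy of $G$ rel $V(G)$. The remaining step is to extend the isotopy to each closed disk $\ol W$ so that it fixes nothing on the interior at time $0$ and equals the identity at time $1$, with prescribed boundary behaviour matching the edge isotopy on $\d W$. This is a standard Alexander-trick argument for homeomorphisms of a disk with prescribed boundary isotopy, and the disk isotopies patch together because they agree with the edgewise isotopy on every shared boundary arc. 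The main obstacle I anticipate is the component-matching statement $\ol h(W)=\ol h'(W)$; I would resolve it by a purely combinatorial argument using boundary circuits, since once one fixes the ribbon graph data and the orientation, the map $W\mapsto W'$ induced by any regular extension depends only on the edge-to-edge assignment.
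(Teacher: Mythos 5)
The paper does not prove this statement itself; it is quoted verbatim as Corollary~6.3 of \cite{BBH92}, so there is no ``paper's own proof'' to compare against. Your reconstruction is the natural one and, modulo a few technical glosses, is essentially the proof one would write out.

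Two points you pass over a little quickly. First, the formula $\psi|_{\ol W}=(\ol h'|_{\ol W})^{-1}\circ\ol h|_{\ol W}$ presupposes that $\ol h'|_{\ol W}$ is a bijection $\ol W\to\ol{W'}$, but $\ol W$ need not be an embedded closed disk (a boundary circuit can traverse an edge twice), so $\ol h'$ may identify distinct boundary points of $W$. The clean way to say what you mean is to pass to the universal model: a closed disk $D$ with a map $\eta\colon D\to\ol W$ that is a homeomorphism on the interior and realizes the boundary circuit on $\partial D$. Both $\ol h\circ\eta$ and $\ol h'\circ\eta$ lift to orientation preserving homeomorphisms $\phi,\phi'\colon D\to D'$ of the models, and $\tilde\psi=(\phi')^{-1}\circ\phi$ fixes the preimages of $V(G)$ on $\partial D$ and preserves each boundary arc, hence respects the $\eta$-identifications and descends to a continuous self-map of $\ol W$ agreeing on $\partial W$ with your edgewise definition. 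This is exactly the continuity-at-the-boundary check you wave at when you say the restriction to $\partial W$ ``reduces to'' the edge formula; it is true, but needs the model to make literal sense. Second, your component-matching claim $\ol h(W)=\ol h'(W)$ is correct and the reason is the one you give: the boundary circuit of $W$, the common edge assignment $e\mapsto h(e)=h'(e)$, and orientation preservation of the extensions together determine a single boundary circuit of $T$, hence a single component $W'$, since boundary circuits of $T$ are in bijection with components of $\S^2\sm T$. The isotopy step (straighten $\psi|_G$ edgewise rel $V(G)$, then Alexander's trick on each complementary disk, again in the universal model) is standard and correct. So the argument is sound; just be aware that $\ol W$ and $\ol{W'}$ should be replaced by their disk models whenever you invert or apply the Alexander trick.
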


We are now ready to deduce Theorem \ref{t:ThmA-def} from Theorem \ref{t:ext-hom}.

\begin{proof}[Proof of Theorem \ref{t:ThmA-def}]
Apply Theorem \ref{t:ext-hom} to $h=f:G\to T$ and $h'=g:G\to T$.
These are clearly graph maps admitting regular extensions.
All assumptions of Theorem \ref{t:ext-hom} are satisfied.
It follows that there exists a homeomorphism $\psi:\S^2\to\S^2$ such that $f=g\circ\psi$ on $\S^2$,
and $\psi$ is isotopic to the identity relative to $V(G)$.
\end{proof}

Thus we proved Theorem \ref{t:ThmA-def}, and the latter implies Theorem A.

\section{No dynamics: spanning trees}
In this section, we associate certain combinatorial objects with a spanning tree.
Recall that, given a finite set $P$ of (marked) points in $\S^2$, a \emph{spanning tree} for $P$ is a tree $T\subset\S^2$
 with the property that $V(T)=P\cup B(T)$, where $B(T)$ is the set of branch points of $T$.
Thus the notion of a spanning tree is an non-dynamical notion.
Suppose that the sphere $\S^2$ is glued of a polygon $\Delta$ by identifying some edges of it.
Then the boundary of $\Delta$ becomes a spanning tree for the set $P$ of all vertices of $\Delta$.
Alternatively, some vertices of $\Delta$ can be dropped from $P$ if these give rise to branch points of the tree.

\subsection{The generating set $\Ec_T$ of $\pi_1(\S^2-P)$}
\label{ss:Ec}
Let $T$ be a spanning tree for a finite marked set $P$.
Assume that the base point $y\in\S^2-T$ is fixed once and for all.
We now define a certain generating set $\Ec=\Ec_T$ of $\pi_1(\S^2-P)=\pi_1(\S^2-P,y)$.
(This is the same generating set as in \cite{H17}; Hlushchanka refers to its elements as \emph{edge generators}).

Endow $\S^2$ with some smooth structure.
(It will be clear however that our construction is independent of this structure).
Consider an oriented smooth Jordan arc $A$.
Let $\gamma$ be a smooth path that crosses $A$ only once and transversely.
By a transverse intersection we mean that the tangent lines to $A$ and $\gamma$ at the intersection point are different, and that the intersection point is not an endpoint of $A$.
We say that $\gamma$ approaches $A$ \emph{from the left} if, at the intersection point, the velocity vectors to $\gamma$ and to $A$ (in this order) form a positively oriented basis in the tangent plane to the sphere.
With every oriented edge $e$ of $T$, we associate an element $g_e\in\pi$ as follows.
The homotopy class $g_e$ is represented by a smooth loop $\gamma_e$ that crosses $e$ just once and transversely,
  approaches it from the left, and has no other intersection points with $T$.
(We assume of course that the loop $\gamma_e$ is based at $y$).
A smooth loop $\gamma_e$ with the indicated properties is said to be \emph{adapted} to $T$ at $e$.
Consider the subset $\Ec=\Ec_T\subset\pi_1(\S^2-P)$ consisting of $id$, the neutral element, and elements $g_e$,
  where $e$ ranges through all oriented edges of $T$.
Note that the same edge equipped with different orientations gives rise to two different elements of $\Ec$.
These elements are inverse to each other.

Thus $\Ec$ is a generating set of $\pi_1(\S^2-P)$ that is symmetric ($\Ec^{-1}=\Ec$) and such that $id\in\Ec$.

\begin{lem}
  \label{l:th-diff}
Different oriented edges of $T$ give rise to different elements of $\Ec$.
\end{lem}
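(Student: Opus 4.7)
My plan is to pass to the abelianization: I will show that even in $H_1(\S^2-P;\Z)$, distinct oriented edges of $T$ give distinct classes $[g_e]$, which a fortiori forces $g_{e_1}\ne g_{e_2}$ in $\pi_1(\S^2-P,y)$. Recall that $H_1(\S^2-P;\Z)$ is generated by the classes $[\mu_p]$ of small counterclockwise loops $\mu_p$ around each $p\in P$, subject to the single relation $\sum_{p\in P}[\mu_p]=0$; in particular, it is torsion-free.

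First I compute $[g_e]$. For $e$ oriented from $a$ to $b$, removing the interior of $e$ from $T$ splits $T$ into two subtrees $T_a\ni a$ and $T_b\ni b$. Represent $g_e$ by a simple loop, built from two disjoint arcs in the disk $\S^2-T$ joining $y$ to the two sides of $e$ and glued by a transverse crossing at $e$; this loop bounds two disks in $\S^2$, one containing $T_a$ and the other containing $T_b$. A winding-number computation, with the sign dictated by the ``approach from the left'' convention, yields
\[
[g_e]\;=\;\sum_{p\in P\cap T_b}[\mu_p]\;=\;-\sum_{p\in P\cap T_a}[\mu_p].
\]
Both $P\cap T_a$ and $P\cap T_b$ are nonempty: each of $T_a,T_b$ contains a leaf of $T$, and every leaf of $T$ must lie in $P$ (a non-$P$ leaf would be a degree-$1$ non-$P$ vertex, violating the spanning-tree hypothesis that $V(T)-P$ consists of branch points). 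In particular $[g_e]\ne 0$.

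The heart of the argument is the combinatorial claim that distinct (unoriented) edges of $T$ induce distinct unordered partitions $\{P\cap T_a,\,P\cap T_b\}$ of $P$. Suppose for contradiction that $e_1\ne e_2$ induce the same partition. Then $T-\{e_1,e_2\}$ has three components $X,Y,Z$, with $e_1$ joining $X$ to $Y$ and $e_2$ joining $Y$ to $Z$. Equating the partitions of $P$ gives either $P\cap X=P\cap Z$ --- forcing both to be empty, as $X,Y,Z$ are pairwise vertex-disjoint --- or $P\cap Y=\emptyset$. In either situation, some component $C\in\{X,Y,Z\}$ contains no vertex of $P$, so by hypothesis every vertex of $C$ is a branch point of $T$ of degree at least $3$. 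But $C$ has only one or two external edges of $T$ ($e_1$ when $C=X$, $e_2$ when $C=Z$, or $\{e_1,e_2\}$ when $C=Y$), while every leaf of $C$ has degree $1$ inside $C$ and hence requires at least two external edges of $T$ incident to it; a direct count of the leaves of $C$ against the available external edges produces the contradiction.

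To conclude: for distinct oriented edges $e_1\ne e_2$, either they share the same underlying edge with opposite orientations --- in which case $[g_{e_2}]=-[g_{e_1}]$, which differs from $[g_{e_1}]$ because $[g_{e_1}]\ne 0$ and $H_1$ is torsion-free --- or their underlying edges differ, in which case distinct partitions of $P$ translate into distinct homology classes (the unordered partition is recovered from $\pm[g_e]$, and the orientation selects one side). I expect the combinatorial partition lemma above to be the main obstacle; it is precisely there that the hypothesis on $V(T)-P$ is used essentially, since a degree-$2$ non-$P$ vertex would let its two incident edges define the same partition and Lemma~\ref{l:th-diff} would fail.
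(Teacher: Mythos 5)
Your proof is correct, and it follows a genuinely different route from the paper's. The paper argues topologically: assuming $g_{e_1}=g_{e_2}$, it represents the two classes by simple loops meeting only at the basepoint, takes the bigon $D$ between them, and shows $D$ contains no point of $P$ (the loops are homotopic rel $P$), hence no branch point of $T$ (a branch point in $D$ would produce a component of $T$ terminating at a leaf --- a $P$-point --- inside $D$), hence no vertex of $T$ at all; the contradiction is that the arc of $T$ joining the two crossing points lies in $D$ and, when the underlying unoriented edges are distinct, must pass through a vertex. Your proposal instead works in $H_1(\S^2-P;\Z)$: you identify $[g_e]$ with the sum of peripheral classes over the $P$-points on one side of $e$, prove the combinatorial lemma that the unordered partition $\{P\cap T_a,P\cap T_b\}$ determines the unoriented edge (this is exactly where the hypothesis $V(T)-P\subset B(T)$ enters, via the leaf-versus-external-edge count), and separate the two orientations of a single edge using torsion-freeness of $H_1$. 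Your version is marginally stronger (distinctness holds already in homology), and it treats the case $e_2=e_1^{-1}$ cleanly; in the paper's picture that case is degenerate, since the bigon $D$ then contains only an open subarc of $e_1$ and no vertex of $T$, so the paper's closing step (``since $e_1\neq e_2$ there is a vertex of $T$ in $D$'') does not apply verbatim there --- your torsion-freeness observation is precisely the clause that covers it. One tiny imprecision: when the $P$-free component $C$ in your combinatorial lemma is a single vertex, it has no degree-$1$ vertex, so ``every leaf has degree $1$ in $C$'' should be read as ``degree at most $1$''; the count still gives a contradiction (degree $0$ in $C$, degree $\geq 3$ in $T$, at most two external edges available), but it is worth stating explicitly.
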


\begin{proof}
Consider two different oriented edges $e_1$, $e_2$ of $T$ with $g_{e_1}=g_{e_2}$.
Let $\gamma_{e_i}:[0,1]\to\S^2$ be smooth simple loops as above so that $g_{e_i}=[\gamma_{e_i}]$.
We can also arrange that $\gamma_{e_1}(0,1)$ is disjoint from $\gamma_{e_2}(0,1)$.
Let $D$ be the croissant shaped region bounded by $\gamma_{e_1}[0,1]$ and $\gamma_{e_2}[0,1]$.

Since $g_{e_1}=g_{e_2}$, the loops $\gamma_{e_i}$ are homotopic rel. $P$.
Therefore, there are no points of $P$ in $D$.
We claim that there are also no branch points of $T$ in $D$.
Indeed, if $x$ is such point, then there is a component of $T-\{x\}$ disjoint from both $e_1$ and $e_2$.
This component must end somewhere in $D$.
On the other hand, by definition of a spanning tree, all endpoints of $T$ are in $P$.
A contradiction with the fact that $P\cap D=\0$.

Since $e_1\ne e_2$, there is at least one vertex $x$ of $T$ in $D$.
However, this is impossible since all vertices of $T$ are in $P\cup B(T)$.
\end{proof}

Consider a set $E$ of smooth loops based at $y$ with the following properties.
Firstly, we assume that every $\gamma\in E$ is adapted to $T$ at some oriented edge $e$ of $T$.
Secondly, there is exactly one loop $\gamma_e\in E$ adapted to $T$ at $e$, and,
 as we change the orientation of $e$, the corresponding loop also changes orientation but otherwise remains the same.
Thirdly, we assume that the constant loop belongs to $E$, and that
 different loops from $E$ are either disjoint (except the common basepoint $y$) or the same (up to the change of direction).
If these assumptions are satisfied, then we say that $E$ is an \emph{adapted set of loops} for $T$.
Clearly, any spanning tree admits an adapted set of loops.
The set $\Ec_T$ equals $[E]$, the set of classes in $\pi_1(\S^2-P)$ of all elements from $E$.

The following lemma will help us translate the pullback operations on spanning trees into a combinatorial language.
We will assume that the basepoint $y$ is chosen outside of all spanning trees under consideration.

\begin{lem}
  \label{l:sameE}
Let $H$ be an inner automorphism of $\pi_1(\S^2-P)$.
Suppose that two spanning trees $T$ and $T'$ are such that $\Ec_T=H(\Ec_{T'})$ in $\pi_1(\S^2-P)$.
Then $T$ and $T'$ are isotopic rel. $P$.
\end{lem}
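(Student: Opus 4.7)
The plan is to show that the set $\Ec_T\subset\pi_1(\S^2-P,y)$, up to inner automorphism, faithfully records the ribbon-tree structure of $T$ together with the $P$-labeling. Given this, the hypothesis $\Ec_T=H(\Ec_{T'})$ implies that $T$ and $T'$ have the same ribbon-tree structure and $P$-labeling, and the claim follows from the classical correspondence between ribbon trees and isotopy classes of embedded trees in $\S^2$ (cited in the introduction, following \cite{MA41}). Since every invariant I will extract from $\Ec_T$ (the oriented edges, the bipartitions of $P$ they induce, and the cyclic order at each vertex) is inner-automorphism equivariant, the conjugation by $H$ does not interfere with the reconstruction.

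I would carry this out in two steps. First, by Lemma \ref{l:th-diff} the correspondence $e\mapsto g_e$ is injective on oriented edges, so $\Ec_T=H(\Ec_{T'})$ yields a bijection $\phi$ between oriented edges of $T$ and of $T'$ satisfying $g_e=H(g_{\phi(e)})$. For the underlying unordered tree structure I would use bipartitions: removing $e$ from $T$ splits $P$ into two sides, and this bipartition is encoded in the image of $g_e$ in $H_1(\S^2-P,\Z)$ via winding numbers around points of $P$; since inner automorphisms act trivially on $H_1$, $g_e$ and $H(g_{\phi(e)})$ yield the same bipartition, and because every non-$P$ vertex in a spanning tree has degree at least $3$, the set of bipartitions uniquely determines the abstract tree and matches $P$-vertices on the two sides. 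Second, for the ribbon structure, the cyclic order of edges at a vertex $v$ is captured by a product relation: for oriented edges $e_1,\dots,e_d$ originating at $v$ in cyclic order, the product $g_{e_1}\cdots g_{e_d}$, suitably conjugated by a path from $y$ to $v$, equals a small positively-oriented loop around $v$ in $\pi_1(\S^2-P,y)$. This product equals the canonical generator around $v$ when $v\in P$, and is trivial when $v$ is a branch point. These relations are invariant under inner automorphism, so $\phi$ must preserve cyclic orders.

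Once $\phi$ is identified as a ribbon-tree isomorphism fixing $P$ pointwise, Theorem \ref{t:graph-ext} yields an orientation-preserving self-homeomorphism of $\S^2$ carrying $T$ to $T'$ and fixing $P$, which produces the required homotopy rel $P$. The main difficulty I anticipate is recovering the cyclic orders purely from $\Ec_T$: the product relations around a vertex depend on an auxiliary basepoint path from $y$ to $v$, so I need to isolate the correct cyclic ordering through intrinsic algebraic conditions that are robust under the very inner-automorphism ambiguity allowed by the hypothesis. A secondary check is that the final passage from a ribbon-graph homeomorphism to a homotopy works genuinely rel $P$ rather than merely rel the homeomorphism; this is standard but deserves care given that the mapping class group of $(\S^2,P)$ is nontrivial.
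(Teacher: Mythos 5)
Your plan contains a genuine gap at the final step, and it is not a ``secondary check'' but the crux of the lemma. After recovering a ribbon-tree isomorphism $\phi$ between $T$ and $T'$ fixing $P$, you invoke Theorem~\ref{t:graph-ext} to produce an orientation-preserving self-homeomorphism of $\S^2$ carrying $T$ to $T'$ and fixing $P$ pointwise, and then assert that this ``produces the required homotopy rel $P$.'' It does not: when $|P|\ge 4$ the pure mapping class group $\pmcg(\S^2,P)$ is nontrivial, and the ribbon graph structure together with the $P$-labeling determines the spanning tree only up to the action of $\pmcg(\S^2,P)$, not up to isotopy rel $P$. Concretely, if $\delta$ is a nontrivial Dehn twist in $\pmcg(\S^2,P)$, then $T$ and $\delta(T)$ are isomorphic as $P$-labeled ribbon trees, yet they are not homotopic rel $P$ (otherwise $\delta$ would be trivial in $\pmcg(\S^2,P)$). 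So all the invariants you extract from $\Ec_T$ --- bipartitions of $P$ read off in $H_1$, and cyclic orders read off from the cyclic vertex-word relations --- are precisely ribbon-graph invariants, and by the time you pass to them you have discarded the extra information encoded in the actual homotopy classes $g_e$.

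The paper's proof keeps that information throughout. After the Birman/$\push$ reduction to the case $\Ec_T=\Ec_{T'}$, it chooses adapted loop systems $E$, $E'$ representing $\Ec_T=\Ec_{T'}$ and observes that each $\gamma_e\in E$ is homotopic rel $P$ to the corresponding $\gamma'_{\phi(e)}\in E'$ \emph{as specific embedded loops}. The Alexander-method fact that homotopic (rel $P$) simple arcs or loops are isotopic rel $P$ then lets one build a sphere homeomorphism taking $\Gamma$ to $\Gamma'$ that is isotopic to the identity rel $P$ (via \cite[Lemma~2.9]{FM12}), and once $E=E'$ the edges of $T$ and $T'$ are pinned down up to homotopy rel $P$ by which faces of $\Gamma$ they join and which loop they cross. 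Your argument would need an analogous step that uses the equality of the actual classes $g_e$, not just the ribbon data they encode; without it, the conclusion is only that $T'$ lies in the $\pmcg(\S^2,P)$-orbit of $T$.
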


\begin{proof}
Let $h$ be an element of $\pi_1(\S^2-P)$ such that $H$ is the conjugation by $h$.
We will write $\pmcg(\S^2,P)$ for the pure mapping class group of $\S^2$ with marked point set $P$.
Consider the homomorphism $\push:\pi_1(\S^2-P,y)\to\pmcg(\S^2,P\cup\{y\})$ from the Birman exact sequence
(cf. Section 4.2.1 of \cite{FM12}).
It is easy to see that $\psi=\push(h)$ acts on $\pi_1(\S^2-P,y)$ as $H$.
Moreover, the Birman exact sequence implies that $\psi$ is isotopic to the identity rel. $P$ but not rel. $P\cup\{y\}$.
Replacing $T'$ with $\psi^{-1}(T')$, we can arrange that $\Ec_T$ and $\Ec_{T'}$ coincide.
Thus we will assume from now on that $\Ec_T=\Ec_{T'}$.

We may assume that both $T$ and $T'$ are composed of smooth arcs.
Suppose that sets $E$, $E'$ of smooth loops based at $y$ are adapted to $T$, $T'$, respectively.
The sets $E$, $E'$ form embedded graphs $\Gamma$, $\Gamma'$, respectively, in $\S^2$ with the single vertex $y$.
Every complementary component (``face'') of $\Gamma$ contains a single vertex of $T$, and similarly for $\Gamma'$.
There is a homeomorphism $\phi:\Gamma\to\Gamma'$ that is simultaneously a graph map.
Moreover, for every edge of $\Gamma$, there is an isotopy transforming this edge to its $\phi$-image.
(Indeed, two loops are homotopic rel. $P$ if and only if they are isotopic rel. $P$.)
Then $\phi$ can be extended as an orientation preserving homeomorphism $\phi:\S^2\to\S^2$ fixing $P$ pointwise.
This follows from Lemma 2.9 of \cite{FM12}.
Moreover, it follows from the same lemma that $\phi$ is isotopic to the identity rel. $P$.
Applying $\phi^{-1}$ to $T'$ and $E'$, we may now assume that $E=E'$.
The corresponding edges of $T$ and $T'$ connect the same complementary components of $\Gamma$ and
 cross the same edge of $\Gamma$.
 It follows that the corresponding edges of $T$ and $T'$ are homotopic rel. $P$, as desired.
\end{proof}

The converse of Lemma \ref{l:sameE} is also true.
We say that two subsets $\Ec$ and $\Ec'$ of a group $\pi$ are \emph{conjugate} if there is $u\in\pi$ such that
 $\Ec'$ coincides with the set of all elements of the form $uvu^{-1}$, where $v$ runs through $\Ec$.

\begin{prop}
  \label{p:trconj}
  Let $T$ and $T'$ be spanning trees for $P$.
The trees $T$ and $T'$ are homotopic rel. $P$ if and only if the corresponding generating sets $\Ec_T$ and $\Ec_{T'}$ are conjugate.
\end{prop}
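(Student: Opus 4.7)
The $(\Leftarrow)$ direction is already contained in Lemma~\ref{l:sameE}, since conjugation by an element of $\pi_1(\S^2-P,y)$ is an inner automorphism of that group. So the real content is the forward direction, which I propose to handle as follows. Suppose $T$ and $T'$ are homotopic rel.\ $P$. Exactly as in the proof of Lemma~\ref{l:sameE} (via Lemma~2.9 of \cite{FM12}) one extends this homotopy to an ambient isotopy of the sphere, producing an orientation preserving homeomorphism $\phi:\S^2\to\S^2$ that fixes $P$ pointwise, satisfies $\phi(T)=T'$, and is connected to $\mathrm{id}_{\S^2}$ by an isotopy $h_t$ rel.\ $P$. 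Set $\alpha(t)=h_t(y)$; this is a path in $\S^2-P$ from $y$ to $\phi(y)$.

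For each oriented edge $e$ of $T$, pick a loop $\gamma_e$ based at $y$ adapted to $T$ at $e$. Because $\phi$ is orientation preserving and sends $T$ to $T'$, the loop $\phi\circ\gamma_e$ is based at $\phi(y)$ and adapted to $T'$ at the oriented edge $\phi(e)$. The map $H(s,t)=h_t(\gamma_e(s))$ is a free homotopy from $\gamma_e$ to $\phi\circ\gamma_e$ whose basepoint traces $\alpha$, so
\[
g_e \;=\; [\gamma_e]_y \;=\; [\alpha\cdot(\phi\circ\gamma_e)\cdot\alpha^{-1}]_y \quad\text{in}\ \pi_1(\S^2-P,y).
\]
The path $\alpha$ may intersect $T'$ in an uncontrolled way, so to identify the image of $\phi\circ\gamma_e$ inside $\Ec_{T'}$ I would rebase via an auxiliary path. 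Since $\S^2-T'$ is an open disk disjoint from $P$ and contains both $y$ and $\phi(y)$, one may choose a path $\delta$ from $\phi(y)$ to $y$ inside $\S^2-T'$. Then $\delta^{-1}\cdot(\phi\circ\gamma_e)\cdot\delta$ is a loop based at $y$ which crosses $T'$ exactly once, from the left, at $\phi(e)$, and so represents $g_{\phi(e)}\in\Ec_{T'}$. Setting $u=[\alpha\cdot\delta]_y$, a direct manipulation of the two expressions gives $g_e=u\cdot g_{\phi(e)}\cdot u^{-1}$. Since $\phi$ induces a bijection between oriented edges of $T$ and those of $T'$, letting $e$ range yields $\Ec_T=u\,\Ec_{T'}\,u^{-1}$, as desired.

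The conceptual heart is already present in Lemma~\ref{l:sameE}; the main technical obstacle in the above plan is the careful bookkeeping of basepoints. The isotopy supplies the path $\alpha$ naturally, but $\alpha$ typically crosses $T'$; the tree-avoiding path $\delta$ is the correction needed to guarantee that $\delta^{-1}\cdot(\phi\circ\gamma_e)\cdot\delta$ is genuinely \emph{adapted} to $T'$ (not merely homotopic to an adapted loop), which uses in an essential way that $T'$ is a tree and hence has disk complement. The single loop $u=\alpha\cdot\delta$ at $y$ simultaneously encodes the displacement of $y$ under the isotopy and the combinatorial correction, and it is this $u$ that exhibits $\Ec_T$ and $\Ec_{T'}$ as conjugates.
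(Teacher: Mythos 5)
Your proof is correct and follows essentially the same route as the paper: extend the homotopy of trees to an ambient isotopy of $(\S^2,P)$, push adapted loops forward, and conjugate by a loop that accounts for the basepoint drift. The only stylistic difference is that the paper normalizes the isotopy so that $\phi_1(y)=y$ and then conjugates by the resulting loop $t\mapsto\phi_t(y)$, whereas you leave the endpoint free and absorb the correction into the explicit element $u=[\alpha\cdot\delta]$ (the auxiliary path $\delta$ in $\S^2-T'$ plays the same role as the paper's final re-normalization inside that disk).
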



\begin{proof}[Proof of Proposition \ref{p:trconj}]
To lighten the notation, we will write $\Ec$ and $\Ec'$ instead of $\Ec_T$ and $\Ec_{T'}$.
We silently assumed that both $\Ec$ and $\Ec'$ are subsets of the same group $\pi=\pi_1(\S^2-P,y)$
 corresponding to a certain basepoint $y$.
Thus the basepoint is fixed.

Suppose first that $\Ec$ and $\Ec'$ are conjugate.
Then $T$ and $T'$ are homotopic rel. $P$, by Lemma \ref{l:sameE}.
Suppose now that $T$ and $T'$ are homotopic.
We may assume that both $T$, $T'$ are formed by smooth arcs.
Let $E$ be a set of smooth loops adapted to $T$.

Now consider a homotopy $T_t$ of $T$ (so that $T_0=T$, $T_1=T'$, and $t$ runs through $[0,1]$).
We may assume that this homotopy is smooth.
Then there is a homotopy $\phi_t:\S^2\to\S^2$ consisting of orientation preserving diffeomorphisms
 such that $\phi_0=id$ and $\phi_t(T)=T_t$.
Clearly, $E_t=\phi_t(E)$ is adapted to $T_t$.
In particular, $E_t$ represents the symmetric generating set $\Ec_t=\Ec_{T_t}$ in $\pi_1(\S^2-P,\phi_t(y))$.

Recall that any homotopy class $c$ of paths connecting two given points $y$, $y'\in\S^2-P$
 gives rise to an isomorphism $H_c:\pi_1(\S^2-P,y)\to \pi_1(\S^2-P,y')$.
Two different isomorphisms of this type differ by an inner automorphism of the target group.
All groups $\pi_1(\S^2-P,\phi_t(y))$ can be identified along the path $t\mapsto \phi_t(y)$.
In particular, $\pi_1(\S^2-P,\phi_1(y))$ identifies with $\pi$.

Modifying the homotopy if necessary, we may arrange that $\phi_1(y)=y$.
Thus, $\Ec_1$ and $\Ec'$ lie in the same group, and, by definition of $\Ec'$, we must have $\Ec'=\Ec_1$.
On the other hand, $\Ec_1$ identifies with $\Ec_0=\Ec$ under the automorphism $H_c$,
 where $c$ is the homotopy class of the loop $t\mapsto \phi_t(y)$.
Since $H_c$ is an inner automorphism, $\Ec$ and $\Ec'$ are conjugate.
Thus the proposition is proved.
\end{proof}

\subsection{Vertex structures}
\label{ss:vertstruc}
Below, we will introduce some formal algebraic/combinatorial notions.
The purpose of these is to translate topological objects, namely, spanning trees, into a symbolic language.

For any finite set $\Ec$, we write $\mathrm{FS}(\Ec)$ for the free semi-group generated by $\Ec$.
The semi-group $\mathrm{FS}(\Ec)$ can also be thought of as the set of all finite words in the alphabet $\Ec$.
The empty word is allowed as an element of $\mathrm{FS}(\Ec)$; it is the neutral element of the semi-group.
For $g$, $h\in \Ec$, the product of $g$ and $h$ in $\mathrm{FS}(\Ec)$ will be written as $g\cdot h$.

Suppose now that $\pi$ is a group and that $\Ec\subset\pi$.
We also suppose that $id\in \Ec$.
Here $id$ means the identity element of $\pi$.
It is not to be confused with the neutral element of $\mathrm{FS}(\Ec)$, which is not an element of $\Ec$ or of $\pi$.
We set $\Ec^\star$ to be the quotient of $\mathrm{FS}(\Ec)$ modulo the relations $id\cdot g=g\cdot id=g$ for all $g\in \Ec$.
Now assume 
 that $\Ec\ni id$ is symmetric, i.e., that $g\in\Ec$ implies $g^{-1}\in\Ec$.
Here $g^{-1}$ is the inverse of $g$ in the group $\pi$.
Then there is a natural map $\Pi:\Ec^\star\to\pi$ that takes every word in the alphabet $\Ec$ to the product of its symbols.
(The latter product is with respect to the group operation in $\pi$.)
We will refer to $\Pi$ as the \emph{evaluation map}.
For example, an element $g_1\cdot g_2\in\Ec^\star$ is mapped to $g_1g_2\in\pi$.
Intuitively, an element $u\in\Ec^\star$ is a way of writing the element $\Pi(u)$ of the subgroup of $\pi$ generated by $\Ec$
 as a product of generators.
Different ways of writing the same element may differ by a sequence of cancellations.
However, we disregard all appearances of $id$.
For example, $g\cdot h\cdot g\cdot g^{-1}$ is different from $g\cdot h$ as an element of $\Ec^\star$.
However, it is the same as $g\cdot id\cdot h\cdot g\cdot id\cdot g^{-1}\cdot id\cdot id$, for example.

A \emph{vertex structure} on $\Ec$ is a subset $\Vc\subset\Ec^\star$ with the following property:
 for every $g\in\Ec$, there is a unique element of $\Vc$ of the form $u_1\cdot g\cdot u_2$ for some $u_1$, $u_2\in\Ec^\star$.
Any vertex structure gives rise to an abstract directed graph $G(\Vc)$ as follows.
The vertices of $G(\Vc)$ are identified with elements of $\Vc$.
The oriented edges of $G(\Vc)$ are labeled by elements of $\Ec$.
Two vertices $v$, $w\in\Vc$ are connected with an oriented edge $g$ (from $v$ to $w$) if
$$
v=v_1\cdot g\cdot v_2,\quad w=w_1\cdot g^{-1}\cdot w_2
$$
for some elements $v_1$, $v_2$, $w_1$, $w_2$ of $\Ec^\star$.
Since $\Ec$ is symmetric, the edges of $G(\Vc)$ always come in pairs so that
  paired edges connect the same vertices but go in different directions.
These pairs of edges correspond to pairs of the form $\{g,g^{-1}\}$ in $\Ec$.
Thus $G(\Vc)$ can also be regarded as an undirected graph,
  by identifying each pair of oppositely directed edges with an undirected edge.
A vertex structure $\Vc$ on $\Ec$ is called a \emph{tree structure} if $G(\Vc)$ is a tree.

Observe that the graph $G(\Vc)$ also carries a natural ribbon graph structure.
Indeed, directed edges of $G(\Vc)$ originating at a given vertex $v\in\Ec^\star$ are linearly ordered.
We refer to the linear order of symbols in words from $\Ec^\star$.
For example, consider a vertex represented by $a\cdot b\cdot c\in\Ec^\star$ with $a$, $b$, $c\in\Ec$.
Then we should think of $a$, $b$, $c$ as appearing in this \textbf{clockwise} order around the given vertex.
That is, the cyclic order of $a$, $b$, $c$ at the given vertex is $[c,b,a]$.

\subsection{Vertex words}
In this section, we explain how a spanning tree $T$ for a finite marked set $P$ defines a tree structure on $\Ec=\Ec_T$.
To this end, we need to equip $T$ with a bit of extra structure.
Namely, we assume that some pseudoaccess is fixed at every vertex of $T$.

Recall that any oriented edge $e$ of $T$ gives rise to a group element (edge generator) $g_e\in\Ec$.
Moreover, by Lemma \ref{l:th-diff}, different edges correspond to different edge generators.
Thus we may think of $\Ec$ as a combinatorial analog for the set of oriented edges of $T$.
We now define a combinatorial analog of a vertex.

\begin{dfn}[Vertex word]
  Let $x$ be a vertex of $T$.
Consider all edges $e_0$, $\dots$, $e_{k-1}$ incident to $x$ and oriented outwards.
The linear order of these edges is well defined if we impose that
\begin{enumerate}
  \item it follows the natural \textbf{clockwise} order around $x$;
  \item the chosen pseudoaccess at $x$ coincides with $(e_{k-1},e_0)$.
\end{enumerate}
Then we define the \emph{vertex word} of $x$ as the product $g_{e_0}\cdot \dots \cdot g_{e_{k-1}}\in\Ec^\star$.
For example, if $k=3$, then $x=g_{e_0}\cdot g_{e_1}\cdot g_{e_2}$
(the product is in $\Ec^\star$, not in $\pi_1(\S^2-P)$!).
Let $\Vc$ be the set of all vertex words associated with the vertices of $T$.
Then $\Vc$ is clearly a tree structure on $\Ec$ such that $G(\Vc)$ is isomorphic to $T$ as a ribbon graph.
\end{dfn}

The construction presented above may seem artificial.
In order to shed some light on it, let us consider an example.
The following is a spanning tree $T$ for a set of three marked points:
$$
\xymatrix{
& {\circ} \ar@{<-}[d]^B& \\
{\circ} \ar@{<-}[r]^A &
{\cdot} \ar@{->}[r]^C &
{\circ}
}
$$
(The marked points, shown as circles, are precisely the endpoints of the tree.)
We write $A$, $B$, $C$ for the oriented edges of $T$ originating at the branch point.
Set $a=g_A$, $b=g_B$, $c=g_C$.
Then the generating set $\Ec_T$ consists of 7 elements $id$, $a$, $a^{-1}$, $b$, $b^{-1}$, $c$, $c^{-1}$.
The vertex word corresponding to the branch point of the tree is $a\cdot b\cdot c$.
Note that this word is different from the neutral element of $\Ec^\star$ even through
 $\Pi(a\cdot b\cdot c)=abc=id$ in $\pi$.
This example explains why we need to consider $\Ec^\star$.
The vertex structure associated with $T$ is
$$
\Vc=\{a\cdot b\cdot c,a^{-1},b^{-1},c^{-1}\}.
$$
Clearly, the combinatorial structure of $G(\Vc)$ represents that of $T$.

\section{Dynamics: computation of the biset}
\label{s:img}
In this section, we consider a Thurston map $f$ of degree two with an invariant spanning tree $T$.
We will find a presentation for the biset of $f$ using only the combinatorics of the map $f:T\to T$.
We start with recalling the terminology.

\subsection{Bisets and automata}
A biset is a convenient algebraic invariant of a Thurston map,
  which fully encodes the Thurston equivalence class.

Fix some basepoint $y\in\S^2-P(f)$.
Define the set $\Xc_f(y)$ as the set of all homotopy classes of paths from $y$ to $f^{-1}(y)$ in $\S^2-P(f)$.
To lighten the notation, we will write $\pi_f$ for the fundamental group $\pi_1(\S^2-P(f),y)$.
There are natural left and right actions of $\pi_f$ on $\Xc_f(y)$.
For this reason, the set $\Xc_f(y)$ is referred to as a \emph{$\pi_f$-biset}.

The left action of $\pi_f$ on $\Xc_f(y)$ is the usual composition of paths.
Let $\gamma$ be a representative of an element $[\gamma]\in\pi_f$, and
  let $\alpha$ be a representative of an element $[\alpha]\in\Xc_f(y)$.
Then $[\gamma][\alpha]$, the left action of the element $[\gamma]\in\pi_f$ on an element $[\alpha]\in\Xc_f(y)$,
  is defined as the element $[\gamma\alpha]$ of $\Xc_f(y)$ represented by
  the composition $\gamma\alpha$ of $\gamma$ and $\alpha$: we first traverse $\gamma$, and then $\alpha$.
According to our convention, paths are composed from left to right.
The right action of $\pi_f$ on $\Xc_f(y)$ is defined as follows.
For $[\gamma]\in\pi_f$ and $[\alpha]\in\Xc_f(y)$ as above, let $\beta$ be the composition of $\alpha$ and the pullback of $\gamma$ originating at the terminal point of $\alpha$.
Then the element $[\alpha].[\gamma]\in\Xc_f(y)$, the right action of $[\gamma]$ on $[\alpha]$, is defined as $[\beta]$.
We will refer to $\Xc_f(y)$ as \emph{the biset of $f$}.
Now that we have a particular example at hand, we give a general algebraic definition of a biset.

\begin{dfn}[Biset]
  Let $\pi$ be a group.
A set $\Xc$ is called a \emph{biset over $\pi$}, or a \emph{$\pi$-biset}, if
  commuting left and right actions of $\pi$ on $\Xc$ are given.
The biset $\Xc$ is said to be \emph{left free} if there exists a subset $\Bc\subset\Xc$ such that
every element $a\in\Xc$ can be uniquely represented as $gb$, where $g\in\pi$ and $b\in\Bc$.
The subset $\Bc$ is then called a \emph{basis} of $\Xc$.
Let $\pi'$ be another group, and $\Xc'$ be a $\pi'$-biset.
A group isomorphism $\rho:\pi\to\pi'$ is said to \emph{conjugate} $\Xc$ with $\Xc'$ if
 there is a bijection $\sigma:\Xc\to\Xc'$ with the property that $\sigma(g_1a.g_2)=\rho(g_1)\sigma(a)\rho(g_2)$
 for all $g_1$, $g_2\in\pi$ and $a\in\Xc$.
If $\rho$ and $\sigma$ with these properties exist, then $\Xc$ and $\Xc'$ are said to be \emph{conjugate}.
If moreover $\pi=\pi'$ and $\rho=id$, we say that $\Xc$ and $\Xc'$ are \emph{isomorphic}.
For more details on these formal notions, we refer the reader to \cite{Nek05,BD17}
(note that bisets are called \emph{bimodules} in \cite{Nek05}, see Chapter 2).
\end{dfn}

Clearly, the biset of a Thurston map is well defined up to conjugation.
Recall the following theorem of Nekrashevich (Theorem 6.5.2 of \cite{Nek05}, see also \cite{Kam01,Pil03}),
which says that, reversely, the conjugacy class of the biset
  determines the Thurston equivalence class of the map:

\begin{thm}
  \label{t:nek}
  Let $f_1$ and $f_2$ be Thurston maps, and $\Xc_{f_i}$ be the corresponding $\pi_{f_i}$-bisets, $i=1$, $2$.
  Here $\pi_{f_i}$ is the fundamental group of $\S^2-P(f_i)$.
\begin{enumerate}
  \item The maps $f_1$ and $f_2$ are Thurston equivalent if and only if there exists
  an orientation preserving homeomorphism $h:\S^2\to\S^2$ such that $h(P(f_1))=P(f_2)$ and
  the induced isomorphism $h_*:\pi_{f_1}\to\pi_{f_2}$ conjugates $\Xc_{f_1}$ with $\Xc_{f_2}$.
  \item Suppose that $P(f_1)=P(f_2)=P$ and the base points chosen for $\Xc_{f_1}$, $\Xc_{f_2}$ coincide.
  The maps $f_1$ and $f_2$ are homotopic rel. $P$ if and only if $\Xc_{f_1}$ and $\Xc_{f_2}$ are isomorphic.
\end{enumerate}
\end{thm}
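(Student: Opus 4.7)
The plan is to handle both parts in tandem: the forward implications are essentially formal, and the real content lies in the converse of part (2); part (1) will then be extracted from (2) by a change-of-marking step.

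For the forward direction of (2) I would begin with a homotopy $\{f_t\}$ of $f_1$ to $f_2$ rel.\ $P$ and invoke the covering homotopy theorem (as used in Lemma \ref{l:deform}) to produce a homeomorphism $\phi\colon\S^2\to\S^2$ isotopic to the identity rel.\ $P$ with $f_2\circ\phi=f_1$. After a further small isotopy arranging $\phi(y)=y$, the assignment $[\alpha]\mapsto[\phi\circ\alpha]$ is checked to give an isomorphism of $\pi_f$-bisets intertwining both actions. The forward direction of (1) is analogous: given a Thurston equivalence $(\phi,\psi)$ between $f_1$ and $f_2$, set $h=\phi$ and use the isotopy between $\phi$ and $\psi$ rel.\ $P(f_1)$ to verify that $h_*$ carries $\Xc_{f_1}$ onto $\Xc_{f_2}$ with the right equivariance.

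The heart of the proof is the converse of (2). Given an isomorphism $\sigma\colon \Xc_{f_1}\to\Xc_{f_2}$ of $\pi_1(\S^2-P,y)$-bisets, I would first pick a left basis $\Bc=\{b_1,\dots,b_d\}$ of $\Xc_{f_1}$; then $\sigma(\Bc)$ is a basis of $\Xc_{f_2}$. Each basis element is represented by a path in $\S^2-P$ from $y$ to some preimage of $y$, so $\Bc$ and $\sigma(\Bc)$ together pin down the two fiber bijections and the monodromy data of the two branched coverings. Next I would choose a CW-decomposition $K$ of $\S^2$ with $P\cup\{y\}\subset V(K)$ whose $1$-skeleton carries the chosen path systems, and which is fine enough that both $f_i^{-1}(K)$ are CW-decompositions of $\S^2$ refining $f_i^{-1}(P\cup\{y\})$. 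The data packaged in $\sigma$ now specifies unambiguously how each $1$-cell of $f_i^{-1}(K)$ attaches, so one can build a cellular homeomorphism $\Phi\colon f_1^{-1}(K)\to f_2^{-1}(K)$ that is the identity on $K$ and satisfies $f_2\circ\Phi=f_1$ on the $1$-skeleton. Extending face-by-face via Alexander's trick yields a homeomorphism $\phi\colon\S^2\to\S^2$ isotopic to the identity rel.\ $P$ with $f_2\circ\phi$ homotopic to $f_1$ rel.\ $P$; combined with the forward direction this produces the required homotopy.

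To deduce (1) from (2), given the homeomorphism $h$ I would form $f_2'=h^{-1}\circ f_2\circ h$ (adjusting $h$ by an isotopy so that $h(y)=y$). This is a Thurston map with $P(f_2')=P(f_1)$ that is topologically conjugate, hence Thurston equivalent, to $f_2$. The hypothesis on $h_*$ translates to an isomorphism $\Xc_{f_2'}\cong\Xc_{f_1}$ as $\pi_{f_1}$-bisets, so part (2) gives a homotopy of $f_1$ to $f_2'$ rel.\ $P(f_1)$, and transitivity closes the argument. The main obstacle I anticipate is the cellular extension step in the converse of (2): one must verify that the purely algebraic data in $\sigma$ genuinely determines how the $2$-cells of $f_i^{-1}(K)$ glue together, so that the cellular bijection on $1$-skeleta extends to a homeomorphism of the sphere. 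Keeping track of the interplay between the left basis, the right $\pi$-action, the ribbon-graph structure of the preimage complex, and the orientations on branched lifts will be the technically demanding part of the argument.
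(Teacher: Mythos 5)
The paper does not prove Theorem~\ref{t:nek}: it is quoted from Nekrashevych (Theorem 6.5.2 of \cite{Nek05}, with pointers to \cite{Kam01,Pil03}) without proof, so there is no in-paper argument to compare your attempt against.

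Judged on its own, your outline has the right architecture --- the forward implications are essentially formal, (1) reduces to (2) by a conjugation, and the converse of (2) carries the content --- but the crucial step is not there. The assertion that one can build a cellular homeomorphism $\Phi\colon f_1^{-1}(K)\to f_2^{-1}(K)$ ``that is the identity on $K$'' does not parse, since $K$ is not a subcomplex of $f_i^{-1}(K)$. What the argument actually needs is (i) that the covering isomorphism of $\S^2-f_1^{-1}(P)$ onto $\S^2-f_2^{-1}(P)$ determined by $\sigma$ fixes $P$ pointwise, and (ii) that its extension to $\S^2$ is isotopic to the identity rel.\ $P$. Point (i) can be extracted from the biset: for $p\in P$, the right action of a peripheral loop $\delta_p$ returns, in its left $\pi_1$-components, the conjugacy classes of loops around preimages of $p$; a preimage lies in $P$ exactly when this class is nontrivial, and the class then determines the point --- but this reasoning does not appear in your sketch. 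Point (ii) is the heart of the theorem and is exactly what you defer as the ``technically demanding part,'' without indicating the mechanism by which the biset data controls the isotopy class. Without it, the construction only yields a homeomorphism $\phi$ with $f_2\circ\phi=f_1$, which does not give homotopy rel.\ $P$ unless one shows $\phi$ lies in the trivial mapping class of $(\S^2,P)$. So the gap is genuine: you have the right picture, but the step that makes the theorem true is the one that is left open.
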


Let us go back to a degree 2 Thurston map $f$.
A basis of $\Xc_f(y)$ consists of two elements.
These are homotopy classes of two paths connecting $y$ with its preimages $y_0$, $y_1$.
Thus, to choose a basis of $\Xc_f(y)$ is the same as to choose two paths $\alpha_0$, $\alpha_1$, up to homotopy rel. $P(f)$,
so that $\alpha_\eps$ connects $y$ with $y_\eps$, for $\eps=0$, $1$.
Once some basis of $\Xc_f(y)$ is chosen, we can associate an automaton with $\Xc_f(y)$.

\begin{dfn}[Automaton]
\label{d:automaton}
 Let $A$ and $S$ be some sets.
 In practically important cases both $A$ and $S$ are finite.
 The set $A$ is called an \emph{alphabet}, and its elements are called \emph{symbols}.
 The set $S$ is called the \emph{set of states}, and its elements are called \emph{states}.
 An \emph{automaton} can be defined as a map $\Sigma:A\times S\to S\times A$, or rather as a triple $(A,S,\Sigma)$.
 Let $\fs(A)$ be the set of finite words in the alphabet $A$, including the empty word.
 This is a \textbf{f}ree \textbf{s}emi-group generated by $A$, thus the notation.
 If we fix some initial state $s_0\in S$, then we obtain a self-map of $\fs(A)$ as follows.
 Imagine that a machine reads a word $w\in \fs(A)$ symbol by symbol, \textbf{right to left}.
 Suppose, at some point, it reads a symbol $a\in A$ and its state is $s$.
 Set $(t,b)=\Sigma(a,s)$.
 Then the machine writes $b$ in place of $a$, changes the state to $t$, and moves one step left.
 In other words, an automaton $(A,S,\Sigma)$ gives rise to a right action of $S$ on $\fs(A)$.
 If $\Sigma$ is fixed, then it is common to write $\Sigma(a,s)$ simply as $as$.
\end{dfn}

Consider an abstract left free $\pi$-biset $\Xc$.
Assume that some basis $\Bc$ of $\Xc$ is chosen.
Then, for every $a\in\Bc$ and every $g\in\pi$, there are elements $a^*\in\Bc$ and $g^*\in\pi$ with $ag=g^*a^*$.
Thus, we have a well-defined map $\Sigma_\Bc:\Bc\times\pi\to\pi\times\Bc$ taking $(a,g)$ to $(g^*,a^*)$.
By definition, this is an automaton with $\pi$ being the set of states.
We will refer to this automaton as the \emph{full automaton} of $\Xc$ in the basis $\Bc$.
Clearly, the full automaton defines $\Xc$ up to isomorphism.
On the other hand, the full automaton carries excessive information.
It is enough to know the values $\Sigma_\Bc(a,g)$ for all $g$ in some generating set of $\pi$.
If $\Bc$ is finite and $\pi$ is generated by a finite set $S$, then the image of $\Bc\times S$ under $\Sigma_\Bc$ is finite.
In particular, this image lies in $S^*\times\Bc$, where $S^*$ is also a finite subset of $\pi$.
Thus, in order to describe the biset, it suffices to indicate the map
$\Sigma_{\Bc}:\Bc\times S\to S^*\times\Bc$ between finite sets.
This map is called a (finite) \emph{presentation} of $\Xc$.
We see that finitely presented bisets can be efficiently described, and computations with them are easy to implement.
However, the isomorphism problem for bisets is not easy, cf. \cite{BD17}.

We now go back to the biset $\Xc_f(y)$ of a quadratic Thurston map $f$.
In a number of important situations, there is a finite generating set $\Ec\subset \pi_f$
 and a basis $[\alpha_0]$, $[\alpha_1]$ with the following property.
For $\eps\in\{0,1\}$ and any element $a\in\Ec$, we have $[\alpha_\eps].a=a^*[\alpha_{\eps^*}]$
  for some $\eps^*\in\{0,1\}$ and $a^*\in\Ec$ depending on $a$ and $\eps$.
Define an automaton $\Sigma:\{0,1\}\times\Ec\to\Ec\times\{0,1\}$ taking $(\eps,a)$ to $(a^*,\eps^*)$.
This automaton has then a finite set of states.
Such automata are practically important and are called \emph{finite state automata}.
Observe that $\Sigma$ defines a finite presentation of $\Xc_f(y)$.
We will see that a simple presentation of $\Xc_f(y)$ by a finite state automaton can be associated with
 every invariant spanning tree of $f$.
This observation was also made in \cite{H17} in a more general context but with a less explicit description of the automaton.

\subsection{A base edge and labels}
\label{ss:baseedge}
We now assume that $(T^*,T)$ is a dynamical tree pair for $f$.
Let $Z$ be the smallest subarc of $T$ containing both $v_1$ and $v_2$.
(In Figure \ref{fig:cut}, top left, this is the union of the arcs $\infty 0$, $0x_\alpha$, and $x_\alpha v$.)
Then $f^{-1}(Z)$ is a Jordan curve containing the critical points $c_1$ and $c_2$.
(In Figure \ref{fig:cut}, bottom left, this is the only simple cycle in the graph.)
We will regard both $Z$ and $f^{-1}(Z)$ as graphs in the sphere
whose vertices are the vertices of $T$ and $f^{-1}(T)$, respectively, contained in $Z$ and $f^{-1}(Z)$, respectively.
Since the tree $T^*$ cannot contain the Jordan curve $f^{-1}(Z)$,
there is at least one edge $e'_b$ of $f^{-1}(Z)$ not contained in $T^*$.
(In Figure \ref{fig:cut}, we removed an edge of $f^{-1}(Z)$ when passing from the bottom left to the bottom right picture.
We may set $e'_b$ to be this removed edge.)
Choose one such edge, and call $e_b=f(e'_b)$ the \emph{base edge} of $T$.
There may be several ways of choosing a base edge.

The two arcs with endpoints $c_1$, $c_2$ mapping onto $Z$ will be denoted by $Z^0$ and $Z^1$.
Here $Z^1$ is chosen to include $e'_b$.
Then $Z^0$ includes the other pullback of $e_b$.

Set $G=f^{-1}(T)$.
Suppose now that some post-critical pseudoaccesses (i.e., pseudoaccesses at critical values) are chosen for $T$.
Intermediate steps in the computation of an automaton for $\Xc_f(y)$, but not the final result, will depend on this choice.
The choice of the post-critical pseudoaccesses gives rise to a representation $G=T^0\cup T^1$.
Here $T^0$, $T^1$ are two trees mapping homeomorphically onto $T$ under $f$.
In Section \ref{ss:ext-finvT}, we defined $T^0$ and $T^1$ using a Jordan arc $C$ connecting $v_1$ with $v_2$ outside of $T$.
However, it is easy to see that $T^i$ depend only on the pseudoaccesses defined by $C$.
To fix the labeling, we assume that $Z^i\subset T^i$ for $i=0,1$.
In fact, $Z^i$ is an ``invariant'' part of $T^i$, independent of the choice of the pseudoaccesses.

Modify $T^*$ so that the critical points of $f$ lying in $T^*$ become vertices.
To distinguished the new (modified) tree from $T^*$, we denote it by $\ol T^*$.
Let $e$ be an edge of $\ol T^*$.
Then $e$ lies in $T^{\ell(e)}$, where $\ell(e)=0$ or $1$.
The number $\ell(e)$ is called the \emph{label} of $e$, cf. the proof of Lemma \ref{l:lev-corr}.
We now reproduce the combinatorial definition of labels.

\begin{dfn}[The label of an edge]
\label{d:level}
Define the \emph{critical pseudoaccesses} of $G=f^{-1}(T)$ as the preimages of the post-critical pseudoaccesses of $T$.
There is a unique function $\ell:E(G)\to\{0,1\}$ with the following properties:
 \begin{enumerate}
  \item we have $\ell(e'_b)=1$;
  \item suppose that edges $e_1$, $e_2$ share a vertex; then $\ell(e_1)=\ell(e_2)$ if and only if $e_1$, $e_2$ are not separated by the critical pseudoaccesses.
 \end{enumerate}
The function $\ell$ with these properties is called the \emph{labeling}.
For $e\in E(G)$, the value $\ell(e)$ is called the \emph{label of the edge} $e$.
An edge of $\ol T^*$ may consist of several edges of $G$.
These edges have the same label since the critical points of $f$ in $T^*$ are vertices of $\ol T^*$.
The label of an edge of $\ol T^*$ is defined as the label of any edge of $G$ contained in it.
Thus the labeling is also defined on $E(\ol T^*)$.
\end{dfn}

Note that the labeling may not be well defined on $E(T^*)$ if there are edges of $T^*$ subdivided by critical points of $f$.
This was the reason for passing from $T^*$ to $\ol T^*$.

\subsection{Signatures}
\label{ss:signature}
As before, $T$ is a spanning tree for $P(f)$ with specified pseudoaccesses at the critical values.
We also need a function on the edges of $T$.

\begin{figure}
  \centering
  \includegraphics[height=4cm]{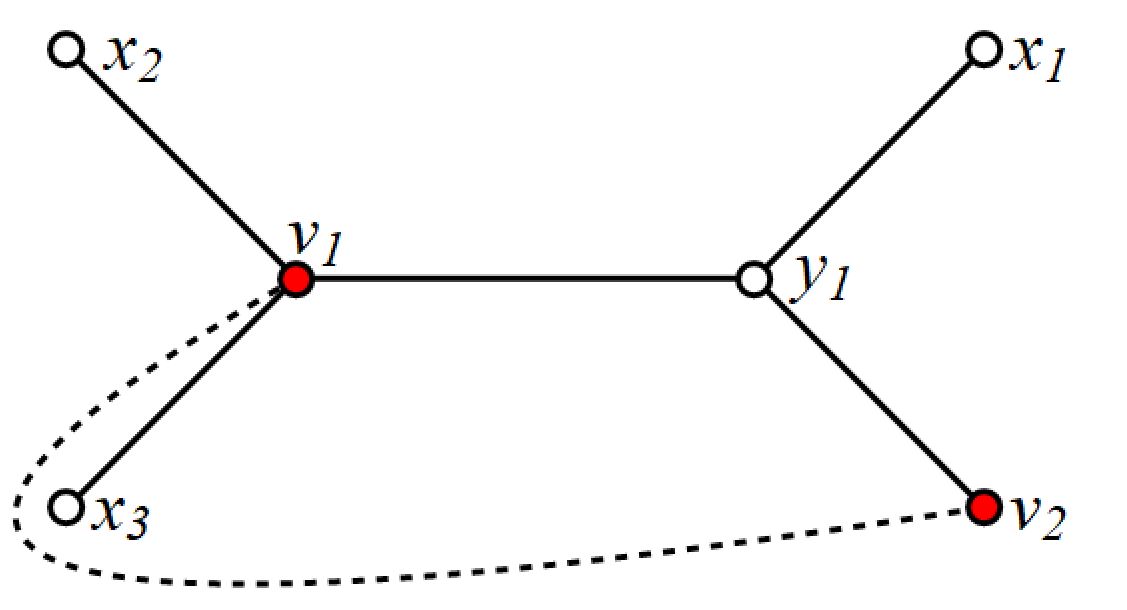}
  \caption{\footnotesize The sequences $S^0(T)$ and $S^1(T)$.
  In this example, the sequence $S^0(T)$ consists of the oriented edges
  $v_1x_2$, $x_2v_1$, $v_1y_1$, $y_1x_1$, $x_1y_1$, $y_1v_2$, taken in this order.
  The sequence $S^1(T)$ consists of the edges $v_2y_1$, $y_1v_1$, $v_1x_3$, $x_3v_1$, taken in this order.
  The signature of $v_1x_2$ and $y_1x_1$ is $(0,0)$.
  The signature of $v_1y_1$ and $y_1v_2$ is $(0,1)$.
  The opposite edges $y_1v_1$ and $v_2y_1$ have signature $(1,0)$.
  The signature of $v_1x_3$ is $(1,1)$.
  The dashed line is the curve $C$ corresponding to the chosen pseudoaccesses at the critical values.}
  \label{fig:circuit}
\end{figure}

\begin{dfn}[Signatures of edges]
\label{d:signature}
Let $C(T)$ be the only boundary circuit of $T$.
Informally: if a particle $x$ loops around $T$ in a small neighbourhood of $T$ so that $T$ is kept on the \textbf{right},
  then the cyclically ordered sequence of oriented edges, along which $x$ moves, coincides with $C(T)$.
Even more informally: $C(T)$ corresponds to walking around $T$ \textbf{clockwise}.
The choice of the direction is explained as follows: as we walk around $T$ clockwise, we walk around $\S-T$ counterclockwise.
The postcritical pseudoaccesses divide all oriented edges from $C(T)$ into two groups (segments) $S^0(T)$ and $S^1(T)$.
The labeling of $S^0(T)$ and $S^1(T)$ is chosen as follows.
By definition, $S^0(T)$ originates at $v_1$ and terminates at $v_2$.
Then $S^1(T)$ originates at $v_2$ and terminates at $v_1$.
See Figure \ref{fig:circuit} for an illustration.
We can now assign \emph{signatures} to all edges of $T$.
We say that an oriented edge $e$ of $T$ is of signature $(i,j)$ if $e$ appears in $S^i(T)$, and $e^{-1}$ appears in $S^j(T)$.
Here, for an oriented edge $e$, we let $e^{-1}$ denote the same edge with the opposite orientation.
Thus there are four possible signatures: $(0,0)$, $(0,1)$, $(1,0)$, and $(1,1)$.
\end{dfn}

\begin{figure}
  \centering
  \includegraphics[width=7cm]{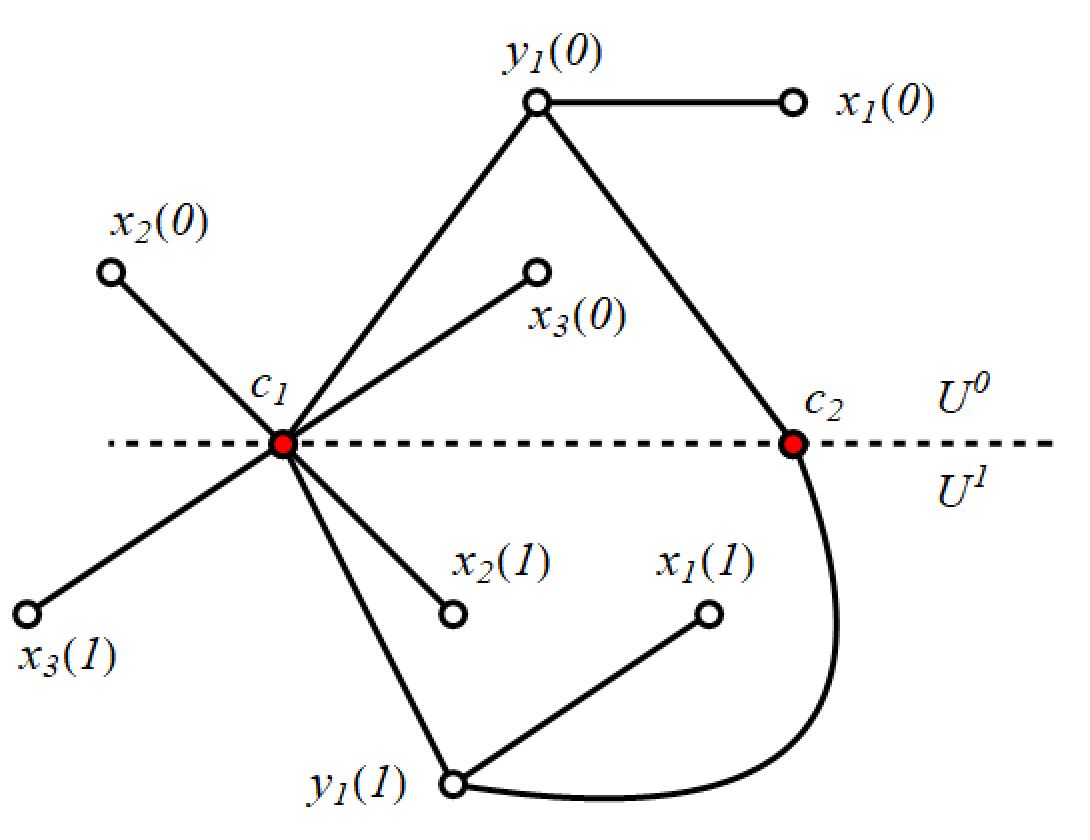}
  \caption{\footnotesize The graph $G$ and the disks $\Omega^0$ and $\Omega^1$.
  Here $G=f^{-1}(T)$, where $T$ is the tree from Figure \ref{fig:circuit}.
  The preimages of $v_1$, $v_2$ are the critical points $c_1$, $c_2$, respectively.
  The preimages of other vertices of $T$ are denoted as their images followed by a label $0$ or $1$ in the parentheses.
  All preimages in $U^0$ (the upper half-plane) are labeled $0$ and
  all preimages in $U^1$ (the lower half-plane) are labeled $1$.
  Then $T^0$ and $T^1$ are the copies of $T$ in $U^0$ and $U^1$, respectively.
  These copies are deformed but topologically the same as $T$.
  The sequence $S^0(T^0)$ goes through the vertices $c_1$, $x_2(0)$, $c_1$, $y_1(0)$, $x_1(0)$, $y_1(0)$, $c_2$.
  The sequence $S^1(T^1)$ goes through the vertices $c_2$, $y_1(1)$, $c_1$, $x_3(1)$, $c_1$.
  The disk $\Omega^0$ bounded by $S^0(T^0)\cup S^1(T^1)$ is the exterior of the quadrilateral $c_1y_1(0)c_2y_1(1)$
  with the arcs $c_1x_3(1)$, $c_1x_2(0)$ and $y_1(0)x_1(0)$ removed.
  }\label{fig:G}
\end{figure}

For $i,j=0,1$, we write $S^i(T^j)$ for the pullback of $S^i(T)$ in $T^j$.
The complement of $G$ in $\S^2$ consists of two disks $\Omega^0$ and $\Omega^1$.
These disks are bounded by $S^0(T^0)\cup S^1(T^1)$ and $S^0(T^1)\cup S^1(T^0)$.
We assume that $\Omega^0$ to be the disk bounded by $S^0(T^0)\cup S^1(T^1)$.
See Figure \ref{fig:G} for an illustration.
More precisely, the oriented boundary of $\Omega^0$,
regarded as a chain of oriented edges of $G$,
is the concatenation of $S^0(T^0)$ and $S^1(T^1)$.
Then $\Omega^1$ is bounded by $S^0(T^1)\cup S^1(T^0)$ in a similar sense.
We will assume that $\Omega^0\ni y$.
The problem, however, is that the two assumptions
\begin{enumerate}
  \item that $y\in\Omega^0$, and
  \item that $\Omega^0$ is bounded by the concatenation of $S^0(T^0)$ and $S^1(T^1)$
\end{enumerate}
may not be compatible.
There are two ways of making them both hold.
On the one hand, we can choose $y$ differently.
Although this is easy in theory, we will not do this in practice.
A basepoint will be fixed once and for all
(see Assumption \ref{as:alpha0} for the principle of choosing the basepoint).
On the other hand, we may relabel $T^0$ and $T^1$ by choosing $e'_b$ differently.
The edge $e'_b$ of $G$ is one of the two pullbacks of $e_b$; the one not in $T^*$.
If we change $T^*$, then we can also replace $e'_b$ with the other pullback of $e_b$.
In this way, we can satisfy both assumptions.
This is how we will act in practice.
At each step of our iterative process, we will define $T^*$ (and $e'_b$) so that both assumptions hold.
The exact procedure will be described later.
For now, we just assume that both assumptions are satisfied.

\subsection{The choice of paths $\alpha_0$, $\alpha_1$}
\label{ss:alpha}
We assume that $(T^*,T)$ is a dynamical tree pair for $f$.
As before, $T$ comes with a specific choice of pseudoaccesses at the critical values.
Recall that $f^{-1}(y)=\{y_0,y_1\}$.
We label the preimages $y_0$, $y_1$ of $y$ so that $y_i\in\Omega^i$ for $i=0,1$.
Choose a path $\alpha_0$ connecting $y$ with $y_0$ outside of $T^*$.
Similarly, choose a path $\alpha_1$ connecting $y$ with $y_1$ outside of $T^*$.
Then $\Bc=\{[\alpha_0],[\alpha_1]\}$ is a basis of $\Xc_f(y)$.
The basis $\Bc$ is well defined and depends only on $T^*$ and $y$.
This description of $\Bc$ is sufficient for now.
However, for later use, we will need a more accurate description of $\alpha_0$ and $\alpha_1$.
We describe them up to a homotopy rel $f^{-1}(V(T))$ rather than rel $V(T)$.
Choose a path $\alpha_0$ connecting $y$ to $y_0$ so that it is disjoint from $G$.
This is possible.
Indeed, according to assumption $(1)$ made in Section \ref{ss:signature}, we have $y\in\Omega_0$.
Recall also that $\Omega_0$ is a topological disk, and that $y_0\in\Omega_0$ by definition of $y_0$.
Therefore, $y$ can be connected with $y_0\in \Omega_0$ by a path in $\Omega_0$.
This path is automatically disjoint from $G$; and we take this path as $\alpha_0$.
The path $\alpha_1$ should be chosen so that it crosses $G$ only once in a point of $e'_b$.
We may arrange that $\alpha_1$ is smooth and that the intersection is transverse.
Since $e'_b$ is not included into $T^*$, this description of $\alpha_1$ is consistent with the earlier description.

For example, in Figure \ref{fig:G}, the path $\alpha_1$ goes from the outside of the quadrilateral $c_1y_1(0)c_2y_1(1)$
  to the inside.
It may cross either $c_2y_1(1)$ or $y_1(1)c_1$; thus there are two possible choices for $e'_b$.

\subsection{A more precise statement of Theorem B}
\label{ss:ThmB-aut}
In this section, we restate Theorem B more precisely and in a greater generality.
Recall that $f:\S^2\to\S^2$ is a Thurston map of degree two.
We assume that $f$ has a dynamical tree pair $(T^*,T)$.
Let $\Ec_T$ and $\Ec_{T^*}$ be the generating sets of $\pi_f$ defined as in Section \ref{ss:Ec}.
We will describe a map $\Sigma:\{0,1\}\times\Ec_T\to \Ec_{T^*}\times\{0,1\}$.
By definition, $\Sigma(\eps,[\gamma])$ is $([\alpha_\eps\gamma^*\alpha_{\eps^*}^{-1}],\eps^*)$,
  where $\gamma^*$ is an $f$-pullback of $\gamma$ originating at $y_\eps$ and terminating at $y_{\eps^*}$.
Note that $\eps^*$ and $\gamma^*$ are determined by $\eps$ and $\gamma$.
The map $\Sigma$ defines a presentation of $\Xc_f(y)$.

Recall our assumption on the basepoint $y$:
the complementary component $\Omega^0$ of $f^{-1}(T)$ containing $y$ is bounded by $S^0(T^0)$ and $S^1(T^1)$.

\begin{thm}
\label{t:ThmB-hom}
We use the terminology and notation introduced above.
Suppose that $\eps\in\{0,1\}$ and $g\in\Ec_{T}$.
Then we have
$$
\Sigma(\eps,g)=(g^*,\eps^*),
$$
where $\eps^*$ and $g^*$ are defined as follows.
If $g=id$, then $\eps^*=\eps$ and $g^*=id$.
Suppose now that $g=g_e$, where $e$ is an oriented edge of $T$ of signature $(\eps+\delta,\eps^*+\delta)$.
Here the addition is mod 2; observe that $\delta$ and $\eps^*$ are determined by $\eps$ and the signature of $e$.
If there is an oriented edge $e^*$ of $\ol T^*$ labeled $\delta$ that maps over $e$ preserving the orientation, then $g^*=g_{e^*}$.
If there is no such edge, then $g^*=id$.
\end{thm}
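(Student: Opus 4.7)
The plan is to verify the formula directly from the definition
\[
\Sigma(\eps,[\gamma]) \;=\; \bigl([\alpha_\eps\gamma^*\alpha_{\eps^*}^{-1}],\,\eps^*\bigr)
\]
using convenient representatives. For $g=id$, take $\gamma$ to be the constant loop at $y$; its $f$-pullback is the constant loop at $y_\eps$, so $\eps^*=\eps$ and $g^*=[\alpha_\eps\alpha_\eps^{-1}]=id$. For $g=g_e$, I would fix a smooth loop $\gamma_e$ adapted to $T$ at $e$---meeting $T$ in one transverse interior point $x$ of $e$ from the left---and let $(i,j)$ denote the signature of $e$.

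To pin down $\eps^*$ and the crossed edge, I would use the sheet structure: the Jordan curve $f^{-1}(C)$ splits $\S^2$ into sheets $U^0,U^1$; writing $R^0,R^1$ for the components of $\S^2-T-C$ and $R^a_\delta := f^{-1}(R^a)\cap U^\delta$, the convention that $\Omega^0$ is bounded by $S^0(T^0)\cup S^1(T^1)$, combined with the clockwise-around-$T$ convention (which places $R^a$ on the left of every $e'\in S^a(T)$, hence $R^a_a$ on the left of every $(e')^a\in S^a(T^a)$), yields the identification $\Omega^\eps\cap U^\delta=R^{\eps+\delta}_\delta$ (addition mod~$2$). Since $\gamma_e$ approaches $x$ from within $R^i$ and $f$ restricts to a homeomorphism $\Omega^\eps\to\S^2-T$, the lift $\gamma_e^*$ based at $y_\eps\in\Omega^\eps$ approaches its $G$-crossing through $R^i_\delta\cap\Omega^\eps$, which is non-empty exactly when $\eps=i+\delta$; thus $i=\eps+\delta$. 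Orientation-preservation of $f$ then puts the crossing at an interior point $\tilde x$ of the oriented edge $e^\delta\subset T^\delta$, the unique preimage of $e$ in $T^\delta$ with orientation matching that of $e$ under $f$. After the crossing the lift enters $R^j_\delta\subset\Omega^{j+\delta}$, so $\eps^*=j+\delta$, confirming the signature parameterization $(\eps+\delta,\eps^*+\delta)$.

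To read off $g^*$, I would exploit that $\alpha_0\subset\Omega^0$ is disjoint from $G$ while $\alpha_1$ meets $G$ only at $e'_b\notin T^*$; hence both $\alpha_\eps$ are disjoint from $T^*$, so the $T^*$-crossings of $\alpha_\eps\gamma_e^*\alpha_{\eps^*}^{-1}$ coincide with those of $\gamma_e^*$. If $e^\delta\subset T^*$, then since $e^\delta$ has no interior critical point it lies in a unique edge $e^*$ of $\ol T^*$ whose label is $\delta$ and whose image covers $e$; orientation-preservation makes the crossing at $\tilde x$ approach $e^*$ from the left, so the whole loop is adapted to $T^*$ at $e^*$ and represents $g_{e^*}$. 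If $e^\delta\not\subset T^*$, the lift avoids $T^*$ altogether and the loop sits inside the simply-connected disk $\S^2-T^*$, giving $g^*=id$. The main obstacle is the sheet-signature bookkeeping of the previous paragraph---verifying the identification $\Omega^\eps\cap U^\delta=R^{\eps+\delta}_\delta$ and transferring ``approach from the left of $e$'' to ``approach from the left of $e^\delta$'' via orientation-preservation of $f$---after which the rest of the argument is mechanical.
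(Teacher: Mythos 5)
Your argument is correct and follows essentially the same strategy as the paper's proof: lift an adapted loop $\gamma_e$ starting from $y_\eps$, determine which component of $\S^2-G$ the lift crosses into, and then identify the resulting basis element by checking whether the crossed preimage edge lies in $T^*$. The only cosmetic difference is that you establish the bookkeeping identity $\Omega^\eps\cap U^\delta=R^{\eps+\delta}_\delta$ directly via the sheet decomposition, whereas the paper packages the same information as Proposition~\ref{p:signatures} about boundary circuits; these are equivalent formulations of the same fact.
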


An edge $e^*$ \emph{mapping over} $e$ preserving the orientation means that $f(e^*)$ is an oriented Jordan arc containing $e$, and that the orientation of $f(e^*)$ is consistent with that of $e$.
Note that an element $g_{e^*}$ for $e^*\in E(\ol T^*)$ is also an element of $\Ec_{T^*}$.
If, say, a critical point divides an edge of $T^*$ into two edges of $\ol T^*$, then
 these two edges give rise to the same pair of mutually inverse elements of $\Ec_{T^*}$.
Suppose that $T^*=T$, then $T$ is an invariant spanning tree for $f$.
In this case, we obtain a finite state automaton $(\{0,1\},\Ec_T,\Sigma)$.
Theorem \ref{t:ThmB-hom} gives an explicit description of this automaton.
Thus it provides a specification of Theorem B.

\begin{cor}
  \label{c:ThmB-hom}
  A dynamical tree pair $(T^*,T)$ for $f$ determines the map $\Sigma:\{0,1\}\times\Ec_T\to\Ec_{T^*}\times\{0,1\}$
  that provides a presentation for the biset of $f$.
  In particular, the isomorphism class of the biset $\Xc_f(y)$ and hence the homotopy class of $f$ are determined by $(T^*,T)$.
\end{cor}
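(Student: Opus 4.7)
The plan is to read the corollary off directly from Theorem \ref{t:ThmB-hom} together with Theorem \ref{t:nek}(2). First I would verify that every ingredient appearing on the right-hand side of the formula $\Sigma(\eps,g)=(g^*,\eps^*)$ in Theorem \ref{t:ThmB-hom} is recoverable from the dynamical tree pair $(T^*,T)$ alone, without further reference to the underlying map $f$. The generating sets $\Ec_T$ and $\Ec_{T^*}$ are indexed by oriented edges of $T$ and $T^*$ respectively (Lemma \ref{l:th-diff}). The signature of an oriented edge of $T$ is read off from the unique boundary circuit $C(T)$ cut by the two post-critical pseudoaccesses, and this depends only on the ribbon graph $T$ and on the distinguished pseudoaccesses at $v_1$, $v_2$, which are part of the data attached to $T$. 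The critical pseudoaccesses of $G=f^{-1}(T)$, and hence the labeling on $E(G)$ and on $E(\ol T^*)$ appearing in Definition \ref{d:level}, are determined by the ribbon structure of $G$ and the chosen base edge $e'_b$.

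Next I would invoke the cut-and-paste construction of Section \ref{ss:ext-finvT}: the ribbon graph $G=f^{-1}(T)$ is reconstructed from $T$ by slitting along an arc realising the post-critical pseudoaccesses and gluing two copies $T^0$, $T^1$ of $T$ along the resulting preimages of $v_1$ and $v_2$. Consequently the whole ribbon-graph datum of $G$, together with its decomposition $G=T^0\cup T^1$, is intrinsic to $(T,\text{pseudoaccesses})$. The predicate ``there exists an oriented edge $e^*$ of $\ol T^*$ labeled $\delta$ mapping over $e$ preserving orientation'' is then a purely combinatorial statement about the inclusion $\ol T^*\subset G$ together with the restriction $f|_{V(T^*)\cup C(T^*)}$, both of which are recorded in the dynamical tree pair. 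Combining these observations, Theorem \ref{t:ThmB-hom} exhibits $\Sigma:\{0,1\}\times\Ec_T\to\Ec_{T^*}\times\{0,1\}$ as a function of $(T^*,T)$.

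For the second assertion I would recall from Section \ref{ss:ThmB-aut} that $\Sigma$ is a finite presentation of the biset $\Xc_f(y)$ in the basis $\Bc=\{[\alpha_0],[\alpha_1]\}$ and with respect to the generating set $\Ec_T\subset\pi_f$. Since $\Ec_T$ generates $\pi_f$, the map $\Sigma$ extends in the standard fashion, by induction on word length in $\Ec_T$ using the biset relation $a(gh)=(ag^*)h$, to the full automaton $\Sigma_\Bc:\Bc\times\pi_f\to\pi_f\times\Bc$, which in turn determines $\Xc_f(y)$ up to isomorphism as a $\pi_f$-biset. Thus the isomorphism class of $\Xc_f(y)$ is determined by $(T^*,T)$, and Theorem \ref{t:nek}(2), applied with $P(f)$ and the basepoint $y$ held fixed, upgrades this to determination of the homotopy class of $f$ rel $P(f)$.

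The main thing to keep an eye on is the dependence on auxiliary choices made along the way — the post-critical pseudoaccesses, the base edge $e'_b$, and the concrete representatives $\alpha_0,\alpha_1$ — but each of these is either part of the data attached to $(T^*,T)$ or pinned down up to homotopy rel $P(f)$ by the conventions of Sections \ref{ss:signature}--\ref{ss:alpha}, so they affect only the presentation and not the isomorphism class of the biset. Once this bookkeeping is confirmed, the corollary is immediate.
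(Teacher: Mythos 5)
The paper gives no explicit proof of this corollary — it is stated as an immediate consequence of Theorem \ref{t:ThmB-hom}, with the only accompanying remark being that the map $\Sigma$ is independent of the choice of pseudoaccesses. Your proposal is a careful expansion of precisely this implicit argument: it verifies that every ingredient of the formula in Theorem \ref{t:ThmB-hom} (signatures, labels, the ribbon-graph reconstruction of $G=f^{-1}(T)$, the existence of $e^*$ mapping over $e$) is recoverable from the combinatorial data of $(T^*,T)$, extends $\Sigma$ to the full automaton in the standard way, and closes via Theorem \ref{t:nek}(2); this is the same route the paper intends, and the bookkeeping on auxiliary choices is correctly handled.
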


Note that the description provided in Theorem \ref{t:ThmB-hom} depends on the choice of pseudoaccesses.
However, the end result, i.e., the map $\Sigma:\{0,1\}\times\Ec_T\to \Ec_{T^*}\times\{0,1\}$,
 is obviously independent of these choices.

\begin{ex}[An automaton for the basilica polynomial]
\label{ex:bas}
Recall that the basilica polynomial is $p(z)=z^2-1$.
It is easy to find a presentation for the biset of $p$ directly (cf. \cite[Section 5.2.2]{Nek05}).
However, we will use Theorem \ref{t:ThmB-hom} in order to illustrate its statement.
Let $T$ be the invariant spanning tree for $p$ defined in Example \ref{ex:quadpoly}:
$$
\xymatrix{
{\stackrel{-1}{\bullet}} \ar@{->}[r]^A &{\stackrel{0}{\circ}} \ar@{->}[r]^B &{\stackrel{\infty}{\bullet}}
}
$$
Since $T$ is invariant, we may take $T^*=T$.
The tree $T$ has three vertices $-1$, $0$, $\infty$ and two edges: $A=[-1,0]$ and $B=[0,\infty]$.
Note that $A$ is the Hubbard tree for $p$.
Orient these two edges from left to right (in the picture, the orientations are represented by arrows).
Observe that $A$ maps onto $A$ reversing the orientation, and $B$ maps onto $A\cup B$ preserving the orientation.
We may represent this symbolically as
$$
A\to A^{-1},\quad B\to A,B.
$$
Observe also that $B$ contains the $\beta$-fixed point $x_\beta$ of $p$, i.e.,
the landing point of the invariant external ray (the latter ray is also a part of $B$).
Set $a=g_{A}$ and $b=g_{B}$.
Thus we have $\Ec=\Ec_T=\{id,a,b,a^{-1},b^{-1}\}$.

Theoretically, we have to make some choices.
Observe that the location of the basepoint is irrelevant since the complement of $T$ in the sphere is simply connected.
The only possible choice for a base edge is $B$ since $f^{-1}(A)\subset T$.
Also, we need to choose two pseudoaccesses at the critical values $v_1=-1$ and $v_2=\infty$.
However, since both critical values are endpoints of $T$, these pseudoaccesses are unique.
In order to implement the algorithm described in Theorem \ref{t:ThmB-hom}, we need to find labels and signatures.
Since $B$ is the base edge, and $B$ maps over $B$, we have $\ell(B)=0$.
Indeed, the edge $e'_b$ of $f^{-1}(G)$ not in $T$ but mapping also over $B$ has label $1$ by definition.
The two critical pseudoaccesses at $0$ separate $A$ from $B$, hence we have $\ell(A)=1$.
The boundary circuit $C(T)$ is $[A,B,B^{-1},A^{-1}]$ (square brackets denote a cyclically ordered set).
Here $A^{-1}$, $B^{-1}$ stand for the edges $A$, $B$ equipped with the opposite orientation.
The two post-critical pseudoaccesses divide $C(T)$ into $S^0(T)=(A,B)$ and $S^1(T)=(B^{-1},A^{-1})$.
By Definition \ref{d:signature}, both $A$ and $B$ have signature $(0,1)$.
The oriented edges $A^{-1}$, $B^{-1}$ have signature $(1,0)$.
We can now compute $(g^*,\eps^*)$ for each pair $(\eps,g)\in\{0,1\}\times\Ec$ according to Theorem \ref{t:ThmB-hom}.

The computations can be organized as follows.
Draw the following table:

\medskip

\bgroup
\def\arraystretch{1.5}
\centerline{
\begin{tabular}{|c|c|c|c|c|}
 \hline
 & $a(0,1)$& $b(0,1)$ & $a^{-1}(1,0)$& $b^{-1}(1,0)$\\
 \hline
0& & & & \\
\hline
1& & & & \\
\hline
& $a^{-1}(1)$& $b(0)$& $a(1)$& $b^{-1}(0)$\\
& $b(0)$ & & $b^{-1}(0)$& \\
\hline
\end{tabular}
}
\egroup

\medskip

In the top row, we list all elements of $\Ec-\{id\}$.
After each element, we indicate its signature.
Thus, columns of the table (except for the leftmost one) are marked by oriented edges of $T$.
These are the $A$-column, then the $B$-column, etc.
The last row is temporarily filled as follows.
In the $A$-column, we write all elements of the form $g_e$, where $e$ is mapped over $A$ preserving the orientation.
In our case, these elements are $a^{-1}=g_{A^{-1}}$ and $b=g_B$.
We proceed similarly with other columns.
After each element of $\Ec-\{id\}$ in the last row, we indicate in the parentheses the label of the corresponding edge.

Now we can fill the second and the third rows of the table.
For example, look at the $A$-column.
Take one of the entries in the last row, say, $a^{-1}(1)$.
Here $a^{-1}$ is an element of $\Ec$ and $1$ is the label.
Add the label to both components of the signature written in the same column.
In our case, we obtain $(0+1,1+1)=({\bf 1},{\bf 0})$.
This means that ${\bf 1}.a=a^{-1}{\bf 0}$ by Theorem \ref{t:ThmB-hom}.
We write $a^{-1}0$ at the intersection of the $A$-column with the row marked $1$.
Now take the remaining entry in the last row, $b(0)$.
Adding the label to the signature, we obtain $(0+0,1+0)=({\bf 0},{\bf 1})$.
This means that ${\bf 0}.a=b\,{\bf 1}$.
We write $b\,1$ at the intersection of the $A$-column with the row marked $0$.

More generally, consider the column marked by an oriented edge $e$ of $T$.
In the first row, we indicated the signature $(\eps+\delta,\eps^*+\delta)$ of this edge,
right after the corresponding element $g_e$.
In the last row, we indicated an edge $e^*$ mapping over $e$ in an orientation preserving fashion,
and the label $\delta$ of $e^*$.
We add $\delta$ to both components of $(\eps+\delta,\eps^*+\delta)$ to obtain $(\eps,\eps^*)$.
Then we have $\eps.g_e=g_{e^*}\eps^*$ by Theorem \ref{t:ThmB-hom}.
We write $g_{e^*}\eps^*$ at the intersection of the $e$-column with the row marked $\eps$.
If there is no edge $e^*$ of label $\delta$ mapping over $e$ preserving orientation, then we write $\eps^*$.

Acting in this way, we obtain the following table
(from which we removed the last row as it was not needed anymore).

\medskip

\bgroup
\def\arraystretch{1.5}
\centerline{
\begin{tabular}{|c|c|c|c|c|}
 \hline
 & $a$& $b$ & $a^{-1}$& $b^{-1}$\\
 \hline
0& $b\, 1$ & $b\, 1$ & $a\, 1$ & $1$\\
\hline
1& $a^{-1}0$ & $0$ & $b^{-1}0$ & $b^{-1}0$\\
\hline
\end{tabular}
}
\egroup

\medskip

Here, in order to evaluate $\Sigma(\eps,g_e)$, one has to look at the intersection of the row marked $\eps\in\{0,1\}$ with the column marked $g_e\in\Ec$.
The cell of the table at the given position contains $g_{e'}\eps'$ or $\eps'$.
In the former case, we have $\Sigma(\eps,g_e)=(g_{e'},\eps')$.
In the latter case, we have $\Sigma(\eps,g_e)=(id,\eps')$.

The following is the Moore diagram for the obtained automaton.

$$
\xymatrix{
*+<1.5pc>[o][F]{a}\ar@{->}[rr]^{(0,1)} \ar@/_/[dd]_{(1,0)}   && *+<1.3pc>[o][F]{b}\ar@{->}[dr]^{(1,0)}\ar@(r,u)[]_{(0,1)} & \\
 && & *+<1.3pc>[o][F]{id}\\
*+[o][F]{a^{-1}}\ar@/_/[uu]_{(0,1)} \ar@{->}[rr]_{(1,0)}          &&*+<0.8pc>[o][F]{b^{-1}}\ar@{->}[ur]_{(0,1)}\ar@(r,d)[]^{(1,0)} &
}
$$
\end{ex}

\subsection{Proof of Theorem \ref{t:ThmB-hom}}
Set $G=f^{-1}(T)$.
Since $f(T^*)\subset T$, we have $T^*\subset G$ set-theoretically.
Moreover, all vertices of $T^*$ are also vertices of $G$ but, in general, not the other way around.
Recall that complementary components of $G$ correspond to boundary circuits of $G$.
We will write $C^i(G)$ for the boundary circuit corresponding to $\Omega^i$.
Recall that the labeling of $\Omega^i$ was defined so that $C^0(G)$ is the concatenation of $S^0(T^0)$ and $S^1(T^1)$.
The boundary circuit $C^1(G)$ is then the concatenation of $S^0(T^1)$ and $S^1(T^0)$.

Let $e$ be an edge of $T$.
Then $f^{-1}(e)$ can be represented as a union $e^0\cup e^1$, where $e^0\in E(T^0)$ and $e^1\in E(T^1)$.
The following proposition is an alternative description of the boundary circuits $C^i(G)$, where $i=0,1$.

\begin{prop}
\label{p:signatures}
 Let $e$ be an oriented edge of $T$ of signature $(i,j)$.
Then $e^0$ belongs the boundary circuit $C^i(G)$ and $e^1$ belongs to the boundary circuit $C^{1-i}(G)$.
In other words, $e^\delta$ belongs to $C^{i+\delta}(G)$,
where $\delta=0,1$ and the addition is mod 2.
\end{prop}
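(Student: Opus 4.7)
My plan is to unpack the geometric decomposition of $\S^2$ induced jointly by $G$ and $\tilde C := f^{-1}(C)$, then match each oriented edge of $T$ to the appropriate boundary circuit of $G$ on each sheet.

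First I would set up the decomposition. Because $T$ contains both critical values, $f:\S^2\setminus G\to\S^2\setminus T$ is an unbranched double cover over a disk, hence trivial, and each $f|_{\Omega^j}$ is a homeomorphism onto $\S^2\setminus T$. Similarly, $\tilde C$ is a Jordan curve through $c_1,c_2$, and $f|_{U^\delta}:U^\delta\to\S^2\setminus C$ is a homeomorphism. The arc $C$ splits $\S^2\setminus T$ into two open disks $\Delta^0,\Delta^1$ whose boundaries in $\S^2\setminus C$ are $S^0(T)$ and $S^1(T)$, respectively, directly from the definition of these segments via the post-critical pseudoaccesses. Lifting tiles $\S^2\setminus(G\cup\tilde C)$ by four disks $\Delta^i_\delta$ ($i,\delta\in\{0,1\}$), with $\Delta^i_\delta\subset U^\delta$ mapping homeomorphically onto $\Delta^i$ and having boundary along $T^\delta$ equal to $S^i(T^\delta)$.

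Next I would identify which of the four quadrants compose each $\Omega^j$. Since $\partial\Omega^0$ contains $S^0(T^0)\subset U^0$ and $S^1(T^1)\subset U^1$ by the labeling convention from Subsection \ref{ss:signature}, the portion of $\Omega^0$ in $U^\delta$ must be the unique quadrant whose boundary on $T^\delta$ is $S^\delta(T^\delta)$, namely $\Delta^\delta_\delta$. Symmetrically, the portion of $\Omega^1$ in $U^\delta$ is $\Delta^{1-\delta}_\delta$. In short, $\Delta^i_\delta\subset\Omega^0$ precisely when $i=\delta$, and $\Delta^i_\delta\subset\Omega^1$ precisely when $i\neq\delta$.

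With the tiling in place, the proposition will follow directly. Given an oriented edge $e$ of signature $(i,j)$, we have $e\in S^i(T)\subset\partial\Delta^i$, and the homeomorphism $f|_{U^\delta}$ identifies $e^\delta$ as the corresponding edge in $S^i(T^\delta)\subset\partial\Delta^i_\delta$. Combined with the previous paragraph, $e^0\in\partial\Delta^i_0$ lies in $\partial\Omega^0$ when $i=0$ and in $\partial\Omega^1$ when $i=1$, giving $e^0\in C^i(G)$; and $e^1\in\partial\Delta^i_1$ lies in $\partial\Omega^1$ when $i=0$ and in $\partial\Omega^0$ when $i=1$, giving $e^1\in C^{1-i}(G)$. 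The main point to watch is the compatibility of orientation conventions (clockwise traversal around $T$ producing $S^i(T)$ versus the counterclockwise boundary circuits of $\Omega^j$), but once the four-fold decomposition is in place, the rest is routine bookkeeping.
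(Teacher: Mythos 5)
Your proof is correct, but it takes a genuinely more geometric route than the paper does. The paper's argument is a one-line unwinding of definitions: signature $(i,j)$ means by definition that $e\in S^i(T)$; the pullback $e^\delta$ then lies in $S^i(T^\delta)$; and since $C^0(G)$ was \emph{defined} to be the concatenation of $S^0(T^0)$ with $S^1(T^1)$ and $C^1(G)$ the concatenation of $S^0(T^1)$ with $S^1(T^0)$, one reads off $S^i(T^\delta)\subset C^{i+\delta \bmod 2}(G)$ immediately. No curves $C$ or $f^{-1}(C)$ are introduced. Your approach reconstructs the geometric content behind these definitions by tiling $\S^2\setminus(G\cup f^{-1}(C))$ into four disks $\Delta^i_\delta$ and then sorting them into $\Omega^0$ and $\Omega^1$ according to the parity of $i+\delta$. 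This is a valid alternative; it is more work but arguably more illuminating, because it shows \emph{why} $\Omega^0$ is bounded by $S^0(T^0)\cup S^1(T^1)$ rather than taking it as a definition. The trade-off is that your argument briefly passes through a set-theoretic statement ($e^\delta\subset\partial\Omega^j$) that by itself does not distinguish boundary circuits, since every edge of $G$ lies on the boundary of both complementary disks; you flag this orientation bookkeeping as routine, and it is, but it would be worth one sentence noting that the orientation of $e^\delta$ inherited from $e$ via the orientation-preserving $f|_{U^\delta}$ is the one for which $\Omega^j$ lies on the correct side, so that $e^\delta$ really occurs as an oriented edge in the circuit $C^j(G)$ and not merely as a point set in $\partial\Omega^j$.
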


\begin{proof}
Suppose that the signature of $e$ is $(i,j)$.
It follows by definition of a signature that $e\in S^i(T)$.
The edge $e^0$ of $G$ is a part of $T^0$ hence also of $S^i(T^0)$.
By definition, this means that that $e^0\in C^i(G)$.
The proof of the claim that $e^1\in C^{1-i}(G)$ is similar.
\end{proof}

We are now ready to prove Theorem \ref{t:ThmB-hom}.

\begin{proof}[Proof of Theorem \ref{t:ThmB-hom}]
Suppose that we are given $\eps\in\{0,1\}$ and $g\in\Ec_{T}$.
If $g=id$, then the conclusion is obvious.
Thus we may assume that $g=g_e$ for some $e\in E(T)$.
Let $g^*$ and $\eps^*$ be as in the statement of Theorem \ref{t:ThmB-hom}.
Namely, let $(i,j)$ be the signature of $e$.
Set $\delta=\eps+i$ mod 2.
Then we also have $i=\eps+\delta$ mod 2.
Define $\eps^*$ as $j+\delta$ mod 2.
Then we also have $j=\eps^*+\delta$.
Thus the signature of $e$ can be written as $(\eps+\delta,\eps^*+\delta)$.
If there is an edge $e^*$ of $\ol T^*$ labeled $\delta$ that maps over $e$ preserving the orientation,
 then we set $g^*=g_{e^*}$.
Note that, if an edge $e^*$ exists, then it is unique.
Indeed, there is only one edge $e^\delta$ of $T^\delta$ mapping to $e$.
This edge $e^\delta$ may or may not be a subset of $T^*$.
If it is, then it is contained in a unique edge $e^*$ of $\ol T^*$.
We equip $e^*$ with the orientation induced from the orientation of $e$ by the map $f:e^\delta\to e$.
Thus $e^*$ is uniquely determined as an oriented edge of $\ol T^*$.
If $e^\delta$ is not a subset of $T^*$, then we set $g^*=id$.

We now need to prove that $\Sigma(\eps,g)=(g^*,\eps^*)$, i.e., that $[\alpha_\eps].g=g^*\,[\alpha_{\eps^*}]$.
Let $\gamma_e$ be a smooth loop based at $y$, crossing $T$ just once transversely and approaching it from the left.
Thus $[\gamma_e]=g$.
By definition $[\alpha_\eps].g$ is (the homotopy class of) the concatenation of $\alpha_\eps$ and a pullback $\gamma^*_e$ of $\gamma_e$.
The pullback $\gamma^*_e$ should start at $y_\eps$, where $\alpha_\eps$ ends.
The path $\gamma^*_e$ approaches some boundary edge $e'$ of $\Omega^\eps$.
Thus $e'$ is an edge of $G$.
Equip $e'$ with an orientation such that $\gamma^*_e$ approaches $e'$ from the left.
Then $e'$ is an element of the boundary circuit $C^\eps(G)$ corresponding to the boundary of $\Omega^\eps$.
Observe that $e'$ must be a pullback of $e$, hence it must coincide with $e^0$ or with $e^1$.
We need to find which one.
By Proposition \ref{p:signatures}, the edge $e^\delta$ belongs to $C^{i+\delta}(G)=C^\eps(G)$.
Therefore, we have $e'=e^\delta$.
Since $e$ is of signature $(\eps+\delta,\eps^*+\delta)$, the two sides of the arc $e'$ belong to $\Omega^\eps$ and $\Omega^{\eps^*}$.
Indeed, the left side of $e'$ is $\Omega^\eps$, as we already know.
On the other hand, by Proposition \ref{p:signatures},
  the opposite edge $(e')^{-1}$ belongs to the boundary circuit $C^{j+\delta}(G)=C^{\eps^*}(G)$.
It follows that the right side of $e'$ is $\Omega^{\eps^*}$.
When crossing $e'$, the path $\gamma^*_e$ leaves $\Omega^\eps$ and enters $\Omega^{\eps^*}$ (it may be that $\eps=\eps^*$).

It follows that the path $\alpha_\eps.\gamma_e$ terminates in $\Omega_{\eps^*}$.
We must have then
$$
[\alpha_\eps].[\gamma]=[\alpha_\eps\gamma^*_e\alpha_{\eps^*}^{-1}]\,[\alpha_{\eps^*}],
$$
and it remains to show that $[\alpha_\eps\gamma^*\alpha_{\eps^*}^{-1}]=g^*$.

Suppose first that $e'$ is not a subset of $T^*$ (then $g^*=id$).
Then $\gamma^*_e$ is disjoint from $T^*$.
Since, by our assumption, $\alpha_0$, $\alpha_1$ are also disjoint from $T^*$, the loop $\alpha_\eps\gamma^*_e\alpha_{\eps^*}^{-1}$ lies entirely in $\S^2-T^*$.
The set $\S^2-T^*$ is simply connected, therefore, this loop is contractible in $\S^2-T^*$ and in $\S^2-P(f)\supset\S^2-T^*$.
Thus both sides of the equality $[\alpha_\eps\gamma^*_e\alpha_{\eps^*}^{-1}]=g^*$ equal $id$, and the equality holds.

Finally, suppose that $e'$ is a subset of $T^*$.
Then, since the edge $e^*$ of $\ol T^*$ has label $\delta$, we have $e^*\supset e'$.
The path $\gamma^*_e$ intersects $G$ once, and approaches $e^*$ from the left.
Therefore, $[\alpha_\eps\gamma^*_e\alpha_{\eps'}^{-1}]=g_{e^*}$, which proves the desired.
\end{proof}

\section{The ivy iteration}
We start with a geometric explanation of the process, after which we provide a formal combinatorial implementation.

\subsection{A geometric description of the iterative process}
\label{ss:ivy-geom}
Consider a marked Thurston map $f:\S^2\to\S^2$ of degree 2 with critical values $v_1$ and $v_2$.

We now describe a procedure that, given a spanning tree $T$ for $f$, allows to recover a dynamical tree pair $(T^*,T)$.
Take the full preimage $G=f^{-1}(T)$.
The basic idea is to select a spanning tree $T^*$ in $G$.
More precisely, we select some subtree of $G$ containing $P(f)$ and then erase some of its vertices.
Thus the choice of $T^*$ is in general not unique.
Below, we will give more precise comments on what it involves to make this choice, in terms of combinatorics.

Recall that the basepoint $y$ is assumed to be outside of $T\cup G$.
We also assume that $y$ and $y_0$ are in the same component of $\S^2-G$.
Choose a base edge $e_b$ of $T$.
As above, we assume that $e_b$ separates $v_1$ from $v_2$.
Having chosen a base edge $e_b$, we can recover $T^*$.
There are two pullbacks of $e_b$ in $G=f^{-1}(T)$.
We choose one of the two pullbacks $e'_b$ so that the following properties hold:
\begin{itemize}
 \item The edge $e'_b$ of $G$ is oriented so that $f:e'_b\to e_b$ preserves the orientation.
 \item Consider a path in $\S^2-G$ originating at $y$ and approaching $e'_b$.
 This path approaches $e'_b$ from the left.
\end{itemize}
Recall that $Z$ is the smallest arc in $T$ connecting $v_1$ with $v_2$.
Then the Jordan curve $f^{-1}(Z)$ consists of two pullbacks of $Z$.
Both pullbacks of $e_b$ are in $f^{-1}(Z)$.
They are oriented both from $c_1$ to $c_2$ or both from $c_2$ to $c_1$.
Thus, one of them, $e'_b$, is oriented as the boundary of the component of $\S^2-f^{-1}(Z)$ containing $y$.
This shows that $e'_b$ is well defined.

We can now define a spanning tree $T^*$.
Clearly, $f^{-1}(Z)$ is the only simple loop in $G$.
Thus, removing $e'_b$ from $G$ leads to a tree.
We set $\widehat T$ to be the smallest subtree of this tree containing $P(f)$.
(In particular, all endpoints of $\widehat{T}$ must be in $P(f)$).
Finally, define $T^*$ as the tree obtained from $\widehat{T}$ by
erasing all vertices of $\widehat T$ that are not in $P(f)$ and are not branch points of $\widehat{T}$.
The erased vertices become points in the edges of $T^*$.
Then $(T^*,T)$ is a dynamical tree pair for $f$.
By definition of labels given in Section \ref{ss:baseedge}, we have $\ell(e'_b)=1$, equivalently, $e'_b\in T^1$.
By the properties of $e'_b$ listed above, the orientation of $e'_b$ corresponds to the orientation of $S^1(T^1)$.
Hence the boundary circuit $C^0(G)$ corresponding to $\Omega^0$ is the concatenation of $S^0(T^0)$ and $S^1(T^1)$.
Thus our assumption made in Section \ref{ss:signature} is fulfilled, and Theorem \ref{t:ThmB-hom} applies to $(T^*,T)$.

The \emph{topological ivy iteration} is aimed at finding an invariant spanning tree for $f$, up to homotopy, or,
  more generally, at finding periodic (also up to homotopy, to be made precise later) spanning trees.
Note that an invariant (or periodic), up to homotopy, spanning tree for $f$ yields  a genuine invariant (or periodic)
  spanning tree for some map homotopic to $f$.
Consider a dynamical tree pair $(T^*,T)$ as above.
Since there are finitely many choices for $e_b$, there are also finitely many choices for $T^*$.

\begin{dfn}[Topological ivy object]
A (topological) \emph{ivy object} is defined as a homotopy class of spanning trees for $P(f)$.
We will write $\ivy(f)$ for the set of all ivy objects for $f$.
For a fixed $f$ with $|P(f)|\ge 4$, there are countably many ivy objects.
There is a free action of the pure mapping class group of $(\S^2,P(f))$ on the set of ivy objects of $f$.
In general, this action is not transitive as, for example, spanning trees may have different combinatorics.
\end{dfn}

Note that, as a set, $\ivy(f)$ depends only on $P(f)$, not on $f$.
However, we will introduce a relation on $\ivy(f)$ that will depend on the dynamics of $f$.
If $T$ is a spanning tree for $f$, then $[T]$ will denote the corresponding ivy object.
Suppose that $(T^*,T)$ is a dynamical tree pair as above.
Then we will write $[T]\multimap [T^*]$, and call the thus defined relation on $\ivy(f)$ the \emph{pullback relation}.
The pullback relation on $\ivy(f)$ can be represented by a structure of an abstract directed graph.
There are finitely many arrows originating at each element of $\ivy(f)$.
The set $\ivy(f)$ equipped with the graph structure just described is called the \emph{ivy graph} of $f$.

Recall that, a subset $C\subset\ivy(f)$ is \emph{pullback invariant} if the following property holds:
whenever $[T]\in C$ and $[T]\multimap [T^*]$, we also have $[T^*]\in C$.
In other words, in the associated directed graph, there are no edges originating in $C$ and terminating outside of $C$.
Finding pullback invariant subsets of $\ivy(f)$ is obviously related to finding periodic spanning trees.
By the way, we can now rigorously define a \emph{periodic object} $\tau\in\ivy(f)$ as
  an element of some directed cycle in $\ivy(f)$.
The length of any simple cycle containing $\tau$ is called a \emph{period} of $\tau$.
(Note that $\tau$ may have several periods according to this definition.
Moreover, as a rule, periodic ivy objects do have several different periods.)
If $T$ is a spanning tree such that $[T]$ is periodic of period $p$, then we say that
  the spanning tree $T$ is periodic of period $p$, up to homotopy.

There is no way of defining the forward image of an ivy object $[T]$ under $f$ simply because
 homotopies rel. $P(f)$ are not preserved by $f$.
This is also confirmed by the existence of a periodic ivy object with several different periods.

\subsection{A formal description of the ivy iteration}
\label{ss:formal}
The purpose of this section is to give a compact and precise description of the computational scheme.
The scheme has been implemented (as a Wolfram Mathematica code) according to this description.
Motivations and geometric explanations are given above.
However, we will still need some work to relate the geometric story to the combinatorial story.

\subsubsection*{Push forward of a generating set}
Let $\Xc$ be an abstract left free biset over a group $\pi$.
Consider a basis $\Bc$ of $\Xc$ and a symmetric generating set $\Ec$ of $\pi$.
(Recall that \emph{symmetric} means that $g^{-1}\in\Ec$ for every $g\in\Ec$).
We will assume that $1\in\Ec$.
Consider the full automaton $\Sigma:\Bc\times\pi\to\pi\times\Bc$ of $\Xc$.
Set $\Sigma=(\sigma,\iota)$, so that $\Sigma(a,g)=(\sigma(a,g),\iota(a,g))$ for all $a\in\Bc$, $g\in\pi$.
Define the \emph{push forward} $P\Ec$ of $\Ec$ as the set $\sigma(\Bc,\Ec)$.
In other words, $P\Ec$ consists of all elements of the form $\sigma(a,g)$, where $a\in\Bc$ and $g\in\Ec$.
Note also that the set $P\Ec$ coincides with the set of all restrictions (sections)
of generators from $\Ec$ under the associated wreath recursion, in Nekrashevych's \cite{Nek05} terminology.
It is easy to see that $P\Ec$ is also a symmetric set containing $1$.
Note that a push forward of $\Ec$ will correspond to a pullback of a tree and, in our case,
it will also be a generating set.
For our purposes, we need to combine the push forward operation with a change of a basis.

\subsubsection*{Push forwards with simultaneous basis changes}
As before, $\Xc$ is an abstract left free biset over a group $\pi$.
Let $\Bc$ be a basis of $\Xc$.
Any function $\lambda:\Bc\to\pi$ defines a basis change: the new basis consists of $\lambda(a)a$, where $a\in\Bc$.
Denote this basis change by $C_\lambda$.
We will also write $a_\lambda$ for $\lambda(a)a$.
The new basis obtained from $\Bc$ through $C_\lambda$ will be denoted by $C_\lambda\Bc$.
Let $C_\lambda\Sigma$ be the full automaton of $\Xc$ in the new basis $C_\lambda\Bc$.
We will now see how the maps $\Sigma$ and $C_\lambda\Sigma$ are related to each other.
The following simple computation solves the problem.
Take any $a\in\Bc$ and any $g\in\pi$, and set $(g^*,a^*)=\Sigma(a,g)$.
Then the following equalities hold in $\Xc$:
$$
a_\lambda.g=\lambda(a)a.g=\lambda(a)g^*a^*=\lambda(a)g^*\lambda(a^*)^{-1}a^*_\lambda.
$$
This computation shows that $C_\lambda\Sigma=(C_\lambda\sigma,C_\lambda\iota)$ is given by
$$
C_\lambda\sigma(a_\lambda,g)=\lambda(a)g^*\lambda(a^*)^{-1},\quad
C_\lambda\iota(a_\lambda,g)=a^*_\lambda.
$$
Given a symmetric generating set $\Ec$ of $\pi$ containing $1$, set $P_\lambda\Ec$ to be the generating set
  obtained as the push forward of $\Ec$ under $C_\lambda\Sigma$.

\subsubsection*{Push forwards of tree structures}
In contrast to the above, we now explicitly assume that $\Bc$ consists of two elements.
As always, we assume that $\Ec$ is symmetric and contains $1$.
We say that $\Ec$ is \emph{tree-like} if there exists a tree structure $\Vc$ on $\Ec$, cf. Section \ref{ss:vertstruc}.
Fix a tree-like generating set $\Ec$.
Below, we will describe a certain set of basis changes $\lambda$ for which $P_\lambda\Ec$ are also tree-like.
For each such $\lambda$, the corresponding tree structure $P_\lambda\Vc$ on $P_\lambda\Ec$
  is defined below.
The map $\Sigma:\Bc\times\Ec\to P\Ec\times\Bc$ can be extended to a map
$$
\Sigma^\star=(\sigma^\star,\iota^\star):\Bc\times\Ec^\star\to (P\Ec)^\star\times\Bc.
$$
The map $\Sigma^\star$ is defined inductively as follows.
If $\0$ denotes the empty word in $\Ec^\star$, then we set $\Sigma^\star(a,\0)=(\0,a)$.
Suppose now that an element of $\Ec^\star$ has the form $g\cdot w$, where $g\in\Ec$ and $w\in\Ec^\star$.
Set $(g^*,a^*)=\Sigma(a,g)$.
Then we set
$$
\sigma^\star(a,g\cdot w)=g^*\sigma^\star(a^*,w),\quad
\iota^\star(a,g\cdot w)=\iota^\star(a^*,w).
$$
Replacing $\Bc$ with $C_\lambda\Bc$, we may assume that $\lambda\equiv 1$.
Suppose that $\Bc=\{a,b\}$.
In order to define the new vertex set $P\Vc\subset (P\Ec)^\star$,
 we first consider the following three sets:
\begin{align*}
  \Vc(a) =&  \{\sigma^\star(a,v)\,|\, v\in\Vc,\ \iota^\star(a,v)=a\},\\
  \Vc(b) =&  \{\sigma^\star(b,v)\,|\, v\in\Vc,\ \iota^\star(b,v)=b\},\\
  \Vc(a,b) =& \{\sigma^\star(a,v)\cdot\sigma^\star(b,v)\,|\, v\in\Vc,\ \iota^\star(a,v)=b\}.
\end{align*}
Now take the union of these three sets, remove the trivial element $1\in\Ec^\star$ form it
  as well as all elements of the form $g\cdot g^{-1}$, where $g\in\Ec$.
The remaining set is $P\Vc$.

\subsubsection*{The combinatorial ivy iteration}
Thus, for every tree-like generating set $\Ec$ of $\pi$, there are several tree-like generating sets of the form $P_\lambda\Ec$.
Set $\Bc=\{a,b\}$.
We will only consider basis changes associated with functions $\lambda:\Bc\to\pi$ such that $\lambda(a)=1$.
This is equivalent to saying that the first basis element $a$ will be fixed once and for all.
In our implementation, this will be the class of a constant loop.

Suppose that $g\in\Ec$ is an element with the property that $\iota(0,g)=1$.
Any such element $g$ is called a \emph{base element}.
With any base element $g$, we associate the function $\lambda_g:\Bc\to\pi$ such that $\lambda_g(b)=\sigma(0,g)$.
(Recall that $\lambda(a)=1$.)
The \emph{combinatorial ivy iteration} is the process of passing from $\Ec$ to $P_{\lambda_g}\Ec$.

Define a \emph{combinatorial ivy object} as a tree-like generating set, up to conjugacy.
More precisely, a combinatorial ivy object is a conjugacy class of tree-like generating sets.
If $\Ec$ is such a generating set, then its conjugacy class will be denoted by $[\Ec]$.
Let $\ivy_c(f)$ be the set of all combinatorial ivy objects with $\pi=\pi_f$.
For a pair of generating sets $\Ec$ and $\Ec'=P_{\lambda_g}\Ec$ as above, connect $[\Ec]$ with $[\Ec']$ by a directed edge.
Each pair $([\Ec],[\Ec'])$ yields only one directed edge,
 no matter in how many ways $[\Ec']$ can be represented in the form $[P_{\lambda_g}\Ec]$.
In this way, $\ivy_c(f)$ becomes a directed graph.
We will show that the combinatorial ivy iteration represents the topological ivy iteration.
In particular, the graph $\ivy(f)$ is isomorphic to a subgraph of $\ivy_c(f)$.
A more precise statement is given in the following theorem:

\begin{thm}
  \label{t:ivy-iso}
There is an isomorphic embedding of $\ivy(f)$ into $\ivy_c(f)$.
This embedding takes a class $[T]$ of a spanning tree $T$ to the conjugacy class of the corresponding generating set $\Ec_T$.
Let $T$ and a base edge $e_b$ of $T$ define a dynamical tree pair $(T^*,T)$ as in Section \ref{ss:ivy-geom}.
If $g$ is the element of $\Ec_T$ corresponding to $e_b$, then $T^*$ corresponds to $P_{\lambda_g}\Ec_T$.
There is a canonical tree structure $\Vc$ on $\Ec_T$ such that $G(\Vc)$ is isomorphic to $T$.
The tree structure $\Vc^*$ on $\Ec_{T^*}$ corresponding to $T^*$ is obtained as $P_{\lambda_g}\Vc$.
\end{thm}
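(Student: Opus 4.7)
The plan is to break the statement into four tasks and verify them in order: well-definedness of the embedding, its injectivity, compatibility with the pullback relation, and compatibility with the tree structures.

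First, I would define the candidate embedding $\Phi\colon\ivy(f)\to\ivy_c(f)$ by $\Phi([T])=[\Ec_T]$. The fact that this is well defined on homotopy classes and injective is exactly Proposition \ref{p:trconj}: two spanning trees are homotopic rel. $P(f)$ if and only if the associated generating sets are conjugate in $\pi_f$. To see that $\Ec_T$ is tree-like, I would invoke the vertex-word construction from Section on vertex words: a spanning tree $T$ with any choice of pseudoaccesses at the vertices determines a vertex structure $\Vc=\Vc_T\subset\Ec_T^\star$, and the associated abstract directed graph $G(\Vc)$ is naturally isomorphic to $T$ as a ribbon graph. Tree-likeness of $\Ec_T$ follows, and the canonical nature of $\Vc$ up to choice of pseudoaccesses is built in.

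Next, I would verify that the pullback relation $[T]\multimap[T^*]$ corresponds combinatorially to $[\Ec_T]\to[P_{\lambda_g}\Ec_T]$ when $g=g_{e_b}$ for a choice of base edge. The first step is to check that $g_{e_b}$ is indeed a base element, i.e. that $\iota(0,g_{e_b})=1$. This follows from Theorem \ref{t:ThmB-hom}: the base edge $e_b$ lies in $Z$, hence has signature $(0,1)$ when oriented consistently with the boundary of $\Omega^0$, and the label condition $\ell(e'_b)=1$ that defines $e'_b$ in Section \ref{ss:ivy-geom} forces the surviving pullback of $e_b$ (the one lying in $T^*$) to have label $0$; feeding this into Theorem \ref{t:ThmB-hom} produces $\Sigma(0,g_{e_b})=(g_{e^*},1)$, so the second coordinate is $1$. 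With $\lambda_g$ defined as in Section \ref{ss:formal}, the new basis represents the paths $\alpha_0$ and $g_{e^*}\alpha_1$; a short computation, which I would carry out using the recursion defining $\Sigma^\star$, identifies the image $P_{\lambda_g}\Ec_T=\sigma(\Bc,\Ec_T)$ precisely with the edge generators of $\ol T^*$, up to the inversion ambiguity encoded in the choice of orientation of $e^*$ over $e$. The point is that Theorem \ref{t:ThmB-hom} produces one nontrivial output $g_{e^*}$ for each edge of $\ol T^*$ mapping orientation-preservingly over an edge of $T$, and $T^*$ is obtained from $\ol T^*$ only by forgetting the critical points as vertices, which does not change the set of edge generators.

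Finally, I would match the tree structures. The vertices of $T^*$ are of three types: vertices of $\ol T^*$ lying in $T^0\setminus T^1$, those lying in $T^1\setminus T^0$, and critical points of $f$ in $T^*$ where two edges of $\ol T^*$ with different labels meet and become a single edge of $T^*$. Using Theorem \ref{t:ThmB-hom} I would show that the vertex word of a non-critical vertex $\tilde x\in T^\delta$ lying over a vertex $x\in T$ is obtained by pushing the vertex word of $x$ through $\sigma^\star(a_\delta,\cdot)$, where $a_0=a$, $a_1=b$, and that $\iota^\star(a_\delta,v_x)=a_\delta$ exactly in this case; this yields the contributions $\Vc(a)$ and $\Vc(b)$. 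At a critical point $c\in T^*$, the two incident edge-families belong to $T^0$ and $T^1$ respectively, so the vertex word of $c$ is the concatenation of the pushforwards through $a$ and through $b$ of the vertex word of the common image $v=f(c)$; the signal that this case occurs is $\iota^\star(a,v_v)=b$, producing exactly the set $\Vc(a,b)$. The excised elements $1$ and the pairs $g\cdot g^{-1}$ correspond respectively to the erased non-branch non-postcritical vertices and to ``backtracking'' at critical points where the local pullback of $T$ folds an edge onto itself and is deleted in forming $T^*$ from $\widehat T$.

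The main obstacle I anticipate is the careful bookkeeping in this last step: one has to verify that the three cases $\Vc(a),\Vc(b),\Vc(a,b)$ together with the prescribed removals yield precisely the vertex words of $T^*$, with no double-counting and with the cyclic order of edges at each vertex respected. The orientation/label conventions fixed in Sections \ref{ss:baseedge}--\ref{ss:signature} and the rule for choosing $e'_b$ in Section \ref{ss:ivy-geom} are precisely what make the combinatorial signs come out consistently, and I would lean on them heavily. Once this correspondence is established at the level of a single pullback step, the statement that $\Phi$ embeds $\ivy(f)$ as a subgraph of $\ivy_c(f)$ follows by applying the construction to every pair $[T]\multimap[T^*]$.
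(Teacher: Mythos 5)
Your plan follows the paper's own route: Proposition \ref{p:trconj} supplies the set-theoretic embedding, the pullback compatibility is handled by a base-element check plus basis change, and the $\Vc(a)$/$\Vc(b)$/$\Vc(a,b)$ decomposition with the two removal rules is precisely Proposition \ref{p:vs} (preceded by Proposition \ref{p:v-crival} for the critical-value criterion $\iota^\star(0,v)=1$).

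There is, however, one gap in the middle step that would need repair. The element $\lambda_g(b)=\sigma(0,g)$ used to build the combinatorial basis change is the $\sigma$-component of the full automaton in the \emph{old} basis $\Bc$ (paths disjoint from $T$), whereas the automaton of Theorem \ref{t:ThmB-hom} is written in the \emph{new} basis $\Bc^*$ (paths disjoint from $T^*$). The $\iota$-component is basis-independent, so your derivation of $\iota(0,g_{e_b})=1$ is fine; but $\sigma(0,g)$ in the basis $\Bc$ is in general neither $g_{e^*}$ nor $id$, so writing ``the new basis represents $\alpha_0$ and $g_{e^*}\alpha_1$'' is not correct. What actually has to be shown — and what the paper proves in Section \ref{ss:geom-algo} by a geometric path argument — is that $C_{\lambda_g}\Bc$ coincides with $\Bc^*$, the basis adapted to $T^*$: one observes that $[\alpha_1^*]=[\alpha_0].g_{e_b}$ because $\alpha_1^*$ crosses $f^{-1}(T)$ exactly once, at $e_b'$, approaching from the left, and then $[\alpha_1^*]=\sigma_\Bc(0,g_{e_b})[\alpha_1]$ by definition of the full automaton in basis $\Bc$. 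Only after this identification does Corollary \ref{c:ThmB-hom} (applied in the new basis $\Bc^*$) deliver $P_{\lambda_g}\Ec_T=\Ec_{T^*}$, and only then does Proposition \ref{p:vs} give $\Vc^*=P_{\lambda_g}\Vc$. Two small additional slips: with the orientation of $e_b'$ fixed by the ``approach from the left'' rule, $e_b$ has signature $(1,0)$, not $(0,1)$ (the conclusion $\iota(0,g_{e_b})=1$ is unaffected); and the two removal rules correspond respectively to preimage vertices lying outside $\widehat T$ (empty $w(\eps,v)$) and to valence-two non-postcritical vertices of $\widehat T$ that are erased in forming $T^*$ ($w(\eps,v)=a\cdot a^{-1}$) — neither is specific to critical points.
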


\subsection{Translation from geometry to combinatorics}
\label{ss:geom-algo}

Proposition \ref{p:trconj} implies that there is at least a set-theoretic embedding of $\ivy(f)$ into $\ivy_c(f)$.
Our symbolic implementation of the ivy iteration will rely on the following assumption.

\begin{ass}
\label{as:alpha0}
Whenever we consider a spanning tree $T$ for $P(f)$, we assume that there is a fixed point $y_0$ of $f$ outside of $T$.
Moreover, as long as we deal with spanning trees that eventually map to $T$, the point $y_0$ is kept the same.
The point $y_0$ will be used as the basepoint for $\pi_f$.
\end{ass}

Assumption \ref{as:alpha0} can always be fulfilled if we replace $f$ with a homotopic Thurston map.
Indeed, choose a point $y_0\notin T$ with $f(y_0)\notin T$ and a path $\beta$ connecting $f(y_0)$ to $y_0$ outside of $T$.
Then the assumption is satisfied if we replace $f$ with $\sigma_{\beta}\circ f$.
Here $\sigma_\beta$ is a path homeomorphism introduced in Section \ref{ss:cap}.
Now, if $y_0$ is fixed under $f$, then $y_0$ is disjoint from $f^{-n}(T)$ for all $n\ge 0$.
In particular, if $(T^*,T)$ is a dynamical tree pair for $f$, then $y_0\in \S^2-T^*$.
Thus we can keep the same $f$-fixed base point during the ivy iteration.

\subsubsection*{The basis change associated with a dynamical tree pair.}
Consider a spanning tree $T$ for $f$, and choose a base edge $e_b$ of $T$.
The choice of $e_b$ defines a dynamical tree pair $(T^*,T)$ as in Section \ref{ss:ivy-geom}.
Consider the basis $\Bc=\{[\alpha_0],[\alpha_1]\}$ of $\Xc_f(y_0)$ associated with $T$.
According to the convention introduced in Section \ref{ss:alpha}, the element $[\alpha_0]$ is the class of the constant path.
The path $\alpha_1$ connects $y_0$ with another preimage $y_1$ of $y_0$ outside of $T$.
This property defines $\Bc$ uniquely.
Let now $\Bc^*=\{[\alpha_0],[\alpha^*_1]\}$ be the basis associated with $T^*$.
We need to show that $[\alpha^*_1]=\sigma(0,g)[\alpha_1]$, where $g$ is the element of $\Ec_T$ corresponding to $e_b$.
Then we will have $\Bc^*=C_{\lambda_g}\Bc$.
This is a part of the correspondence between the geometric and the combinatorial ivy iterations.

Set $h=\sigma(0,g)$.
Let $e'_b$ be the edge of $f^{-1}(T)$ of label 1 that is not in $T^*$.
Recall that $\alpha^*_1$ is defined as a path from $y_0$ to $y_1$ crossing $e'_b$ and disjoint from $f^{-1}(T)$ otherwise.
In Section \ref{ss:ivy-geom}, we have chosen $e'_b$ so that $\alpha^*_1$ approaches it from the left.
It follows that $[\alpha_0].g=[\alpha^*_1]$ in $\Xc_f(y_0)$.
Thus we can define $\alpha^*_1$ as $\alpha_0.\gamma_{e_b}$.
On the other hand, we have $\Sigma(0,g)=(h,1)$, therefore, $[\alpha^*_1]=[\alpha_0].g=h[\alpha_1]$, as desired.

\subsection{Pullback and vertex words}
\label{ss:vs}
In this section, we study the effect of the pullback relation on vertex words.

\begin{prop}
  \label{p:v-crival}
Let $x$ be a vertex of $T$, and $v\in\Ec^\star$ be the corresponding vertex word.
The vertex $x$ is a critical value of $f$ if and only if $\iota^\star(0,v)=1$.
In this case, we also have $\iota^\star(1,v)=0$.
\end{prop}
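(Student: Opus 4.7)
The plan is to compute $\iota^\star(\eps,v)$ directly from the explicit description given in Theorem \ref{t:ThmB-hom}, and then to evaluate the resulting sum by tracing the boundary circuit $C(T)$ through $x$. First, I would observe that Theorem \ref{t:ThmB-hom} simplifies considerably on a single letter: if $e$ is an oriented edge of $T$ with signature $(s,t)$, then solving $s=\eps+\delta$ and $t=\eps^*+\delta$ modulo $2$ yields $\iota(\eps,g_e)\equiv\eps+(t-s)\pmod 2$, so the state increment $\iota(\eps,g_e)-\eps$ depends only on the signature of $e$ and not on the current state $\eps$. Applying this letter by letter to the vertex word $v=g_{e_0}\cdot\dots\cdot g_{e_{k-1}}$, these increments add up to give
$$
\iota^\star(\eps,v) \equiv \eps + \sum_{i=0}^{k-1}(t_i-s_i) \pmod 2,
$$
where $(s_i,t_i)$ is the signature of the $i$-th outgoing edge $e_i$.

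To control this sum, I would next trace the boundary circuit $C(T)$ through $x$. It visits $x$ exactly $k$ times, and at the visit lying clockwise between the outgoing edges $e_i$ and $e_{i+1}$, the circuit arrives along $e_i^{-1}$ and leaves along $e_{i+1}$, in accordance with the ``immediate predecessor'' rule in the definition of a boundary circuit and with the clockwise convention used to define the vertex word. Consecutive oriented edges of $C(T)$ lie in the same segment $S^j(T)$ of the partition $C(T)=S^0(T)\cup S^1(T)$ unless the corner joining them is one of the two post-critical pseudoaccesses, which sit precisely at $v_1$ and $v_2$. Since $v_1\ne v_2$ for a degree-$2$ branched cover and each critical value carries exactly one of the two pseudoaccesses, this yields a clean dichotomy: if $x\notin\{v_1,v_2\}$, then $t_i=s_{i+1}$ for every $i$; if $x\in\{v_1,v_2\}$, then $t_i=s_{i+1}$ for every $i$ except one index $i_0$ at which $t_{i_0}\ne s_{i_0+1}$.

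Combining the two observations, the sum $\sum(t_i-s_i)$ telescopes to $\sum(s_{i+1}-s_i)\equiv 0\pmod 2$ in the non-critical-value case, and telescopes with a single extra discrepancy to $1\pmod 2$ in the critical-value case. Setting $\eps=0$ then gives $\iota^\star(0,v)=1$ if and only if $x$ is a critical value; setting $\eps=1$ gives the supplementary assertion $\iota^\star(1,v)=0$ in the critical case. The step that will require the most care is the bookkeeping in the second paragraph: matching the clockwise enumeration $e_0,\dots,e_{k-1}$ (used in defining the vertex word) with the ``immediate predecessor'' rule in the counterclockwise cyclic order at $x$ (used in defining $C(T)$), so that the correct signature components get paired across each corner of $x$; once that correspondence is verified, the telescoping itself is routine.
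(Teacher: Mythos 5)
Your proof is correct, but it takes a genuinely different route from the paper's. The paper's argument is a short topological one: it uses the identity $\iota^\star(\eps,v)=\iota(\eps,\Pi(v))$, observes that $\Pi(v)$ is represented by a small simple loop $\gamma$ around $x$, and notes that the two $f$-pullbacks of $\gamma$ are loops (equivalently, $\iota(\eps,\Pi(v))=\eps$ for both $\eps$) precisely when $x$ is not a critical value. Your argument instead stays entirely inside the combinatorial framework: it extracts from Theorem~\ref{t:ThmB-hom} the state-increment rule $\iota(\eps,g_e)\equiv\eps+(t-s)\pmod 2$, sums the increments over the vertex word, and controls the resulting sum by a parity/telescoping argument on the boundary circuit $C(T)$, with the two post-critical pseudoaccesses accounting for the sole mod-$2$ discrepancies. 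The trade-off is clear: the paper's proof is shorter and bypasses the automaton entirely, but relies on the geometric identification of $\Pi(v)$ with a small loop; yours is longer and requires the ordering bookkeeping you flag at the end (matching the clockwise vertex-word enumeration with the left-turn rule for $C(T)$ — which does come out consistently, giving the corner pairs $(e_i^{-1},e_{i+1})$), but it makes visible that the proposition is a formal consequence of the signature combinatorics in Theorem~\ref{t:ThmB-hom}, and in particular it re-derives the parity invariance without ever lifting a loop.
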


\begin{proof}
Let $\Pi:\Ec^\star\to\pi_f$ be the evaluation map.
Note that $\iota^\star(\eps,v)=\iota(\eps,\Pi(v))$ for each $\eps\in\{0,1\}$.
The element $\Pi(v)\in\pi_f$ is represented by a loop around $x$ that crosses $T$ only in a small neighborhood of $x$.
We may assume that this loop $\gamma$ is smooth and simple.
Then it bounds a disk $D$ such that $D\cap V(T)=\{x\}$.
The two $f$-pullbacks of $\gamma$ are loops or not loops depending on whether $x$ is a critical value or not.
On the other hand, these pullbacks are loops if and only if $\iota(\eps,\Pi(v))=\eps$ for all $\eps=0,1$.
\end{proof}

Suppose now that $v=a_0\cdots a_{k-1}$.
Consider elements $\sigma^\star(0,v)=b_0\cdots b_{k-1}$ and $\sigma^\star(1,v)=c_0\cdots c_{k-1}$ of $(P\Ec)^\star$.
Here $b_i$ and $c_i$ are elements of $\pi_f$, for $i=0$, $\dots$, $k-1$.
Set $\eps_0=0$ and $\eps_i=\iota^\star(0,a_0\cdots a_{i-1})$ for $i=1$, $\dots$, $k$.
Similarly, we set $\delta_0=0$ and $\delta_i=\iota^\star(1,a_0\cdots a_{i-1})$ for $i=1$, $\dots$, $k$.
Suppose first that $\eps_{k}=1$ (then also $\delta_{k}=0$).
Then we define the word $w(0,v)=w(1,v)\in (P\Ec)^\star$ as
$$
\sigma^\star(0,v)\sigma^\star(1,v)=b_0\cdots b_{k-1}\cdot c_0\cdots c_{k-1}.
$$
Suppose now that $\eps_{k}=0$ (then also $\delta_{k}=1$).
Then we define
$$
w(0,v)=\sigma^\star(0,v)=b_0\cdots b_{k-1},\quad  w(1,v)=\sigma^\star(1,v)=c_0\cdots c_{k-1}.
$$

\begin{prop}
  \label{p:vs}
Suppose that the basis $\Bc$ of $\Xc_f(y_0)$ corresponds to $T^*$.
Then the tree structure $\Vc^*$ on $P\Ec$ corresponding to $T^*$ coincides with the set of $w(\eps,v)\in (P\Ec)^\star$,
  where $\eps$ runs through $\{0,1\}$, and $v$ runs through $\Vc$,
  except that we omit $w(\eps,v)$ if it is empty or it has the form $a\cdot a^{-1}$ for some $a\in\Ec$.
In other words, we have $\Vc^*=P\Vc$.
\end{prop}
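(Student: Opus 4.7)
The plan is to fix a vertex $x\in V(T)$ with vertex word $v=a_0\cdots a_{k-1}\in\Vc$, where $a_i=g_{e_i}$ for the outward-oriented edges $e_0,\dots,e_{k-1}$ of $T$ at $x$ listed clockwise from the chosen pseudoaccess, and then to compute the $T^*$-vertex word of each preimage of $x$ that becomes a vertex of $T^*$. By Proposition \ref{p:v-crival}, $x$ is a critical value precisely when $\iota^\star(0,v)=1$, so the case split in the definition of $w(\eps,v)$ matches the topological dichotomy: either $x$ has two distinct preimages $x^0\in T^0$ and $x^1\in T^1$ with pseudoaccesses lifted via the local homeomorphisms $f|_{T^i}$, or $x$ has a single preimage, the critical point $c$, at which the two lifts of the pseudoaccess at $x$ coincide with the two critical pseudoaccesses of Section \ref{ss:baseedge}.

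Next I read off the $G$-vertex word of a preimage $z$ of $x$. Traversing a small clockwise loop $\gamma$ around $x$ starting at the chosen pseudoaccess, so that $\gamma$ crosses $e_0,\dots,e_{k-1}$ in order, and lifting it from $y_\eps$, Theorem \ref{t:ThmB-hom} says that the generator recorded at the $i$-th crossing is the first component of $\Sigma(\eps_i,a_i)$: namely $g_{e^*_i}$ for the appropriate edge $e^*_i\in E(\ol T^*)$ above $e_i$, or $id$ when no such edge exists. When $x$ is not a critical value we have $\iota^\star(\eps,v)=\eps$, so the lifted loop closes after $k$ crossings and the $G$-vertex word at $x^\eps$ is $\sigma^\star(\eps,v)$. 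When $x$ is a critical value the lift starting in sheet $0$ ends in sheet $1$ after $k$ crossings; closing the loop at $c$ requires a further traversal of $\gamma$ starting in sheet $1$, producing the vertex word $\sigma^\star(0,v)\cdot\sigma^\star(1,v)=w(0,v)=w(1,v)$. This matches the local degree-two ramification at $c$: there are $2k$ germs incident to $c$, partitioned by the two critical pseudoaccesses into sheet-$0$ germs and sheet-$1$ germs, and the clockwise walk from the lifted starting pseudoaccess first exhausts one set and then the other, which is exactly what the concatenation records.

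Finally, I identify the preimages that survive in $V(T^*)$. By the construction of Section \ref{ss:ivy-geom}, a preimage $z$ of $x$ is dropped exactly when $z\notin P(f)$ and $z$ is not a branch point of $\widehat T$, which happens iff $z$ has degree $0$ in $\widehat T$ or is an interior point of some edge $e^*\in E(T^*)$. In the first case every $\sigma(\eps_i,a_i)=id$, so $w(\eps,v)$ is the empty word. In the second case the clockwise loop around $z$ in $G$ crosses $e^*$ twice from opposite sides, contributing $g_{e^*}$ and $g_{e^*}^{-1}$ in two consecutive positions while all other contributions are $id$, so $w(\eps,v)$ has the form $a\cdot a^{-1}$; these are exactly the two cases excluded from $P\Vc$. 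For every remaining preimage $z\in V(T^*)$, the word $w(\eps,v)$, with the absorption rule $id\cdot g=g\cdot id=g$ already built into $\Ec^\star$, coincides with the $T^*$-vertex word of $z$ under the inherited ribbon structure, giving $\Vc^*=P\Vc$. The main obstacle is the critical-value case: verifying that the clockwise order of the $2k$ germs at $c$ really splits as sheet-$0$ followed by sheet-$1$ requires the labeling scheme of Section \ref{ss:baseedge} together with a careful check that the critical pseudoaccesses separate the two groups of germs, so that the concatenation $\sigma^\star(0,v)\cdot\sigma^\star(1,v)$ lists the germs in the correct order starting from the chosen lifted pseudoaccess.
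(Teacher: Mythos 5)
Your proposal follows essentially the same strategy as the paper's proof: fix a vertex $x$ with vertex word $v$, lift a small loop around $x$ (decomposed into crossings of the edges $e_0,\dots,e_{k-1}$) from $y_\eps$, split into the critical-value and non-critical-value cases via Proposition~\ref{p:v-crival}, read off the $G$-vertex word of each preimage from the lifted crossings, and identify the two exclusion rules (empty word, $a\cdot a^{-1}$) with the erased or absorbed preimage vertices. The paper makes the lifting step more explicit by building auxiliary paths $\beta_i$ from the basepoint to near $x$ and invoking the observation that a pullback of a short path is short, then running an induction on $i$ to see that the lifted crossings $\gamma_i^\eps$ all accumulate around a single preimage; you instead appeal directly to Theorem~\ref{t:ThmB-hom} to identify the generator recorded at each crossing with the first component of $\Sigma(\eps_i,a_i)$. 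That is a clean shortcut and it is consistent with the paper, but you should be aware that it silently re-uses the same geometric input (the $\beta_i$ and short-$\gamma_i$ bookkeeping) that the proof of Theorem~\ref{t:ThmB-hom} itself rests on. You also correctly flag as the delicate point the fact that, at a critical preimage, the $2k$ germs in clockwise order split into the sheet-$0$ block followed by the sheet-$1$ block; the paper settles this by observing that the compositions of the $\gamma_i^0$ and the $\gamma_i^1$ are the two ``halves'' of the pullback of the small loop around $x$. Modulo filling in that one verification (which you acknowledge rather than carry out), the argument is sound and matches the paper's.
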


\begin{proof}
Let $x$ be a vertex of $T$, and $v$ be the corresponding vertex word.
Let $A_0$, $\dots$, $A_{k-1}$ be the oriented edges of $T$ such that $a_i=g_{A_i}$ for $i=0$, $\dots$, $k-1$.
Define $\beta_i$ as a smooth path disjoint from $T$, starting at $y$, and ending in a small neighborhood of $x$.
Choose $\beta_i$ so that it approaches $x$ between $A_{i-1}$ and $A_i$.
Here $i-1$ is understood modulo $k$ so that for $i=0$ we have $A_{i-1}=A_{k-1}$.
Choose the paths $\gamma_{A_i}$ with $a_i=[\gamma_{A_i}]$ as $\beta_i\gamma_i\beta_{i+1}^{-1}$,
  where $\gamma_i$ is a short path in a small neighborhood of $x$ crossing $A_i$ just once and transversely.
We may assume that $\gamma_i$ is disjoint from all other edges of $T$.
For $\eps\in\{0,1\}$, consider the pullback $\beta_i^{\eps}$ of $\beta_i$ originating at $y_\eps$.

Suppose first that $x$ is not a critical value of $f$.
Then there are two preimages $x_0$ and $x_1$ of $x$.
We will prove that the vertex words of $x_0$, $x_1$ are $w(0,v)$, $w(1,v)$ (not necessarily in this order),
  provided that $x_0$ and $x_1$ are vertices of $T^*$.
Without loss of generality, we may assume that $\beta_0^0$ ends near $x_0$.
(Otherwise, simply swap $x_0$ and $x_1$.)
Set $\gamma_i^\eps$ to be the pullback of $\gamma_i$ near $x_\eps$.
An important observation is that a pullback of a short path is short.
Therefore, $\gamma_i^\eps$ must indeed stay near one preimage of $x$ rather than wander between the two preimages.
Then the induction on $i$ shows that $b_i$ is represented by
$\alpha_{\eps_i}\beta^{\eps_i}_i\gamma_i^0(\beta^{\eps_{i+1}}_{i+1})^{-1}\alpha_{\eps_{i+1}}^{-1}$
and that all $\beta_i^{\eps_i}$ end near $x_0$.
Note also that $b_i=id$ if and only if $\gamma_i^0$ does not cross any edge of $T^*$.
Otherwise it crosses exactly one edge.
The composition of all $\gamma_i^0$ is a small loop around $x_0$.
Moreover, this loop is a pullback of the small loop $\gamma$ around $x$, which is the composition of all $\gamma_i$.
It follows that the oriented edges of $T^*$ coming out of $x_0$ correspond precisely to non-identity elements $b_i$.
This means that $w(0,v)$ is the vertex word for $x_0$; the proof of $w(1,v)$ being the vertex word for $x_1$ is similar.

Suppose now that $x$ is a critical value of $f$.
Then there is just one preimage $x_0=x_1$ of $x$.
We will prove that $w(0,v)=w(1,v)$ is the vertex word for $x_0=x_1$ provided that $x_0$ is a vertex of $T^*$.
Set $\gamma_i^0$ be the pullback of $\gamma_i$ originating where $\beta_i^{\eps_i}$ ends, and
  $\gamma_i^1$ be the pullback of $\gamma_i$ originating where $\beta_i^{\delta_i}$ ends.
Since $\delta_i\ne\eps_i$, the paths $\gamma_i^0$ and $\gamma_i^1$ are always different pullbacks of $\gamma_i$.
Similarly to the above, $b_i$ is represented by
$\alpha_{\eps_i}\beta^{\eps_i}_i\gamma_i^0(\beta^{\eps_{i+1}}_{i+1})^{-1}\alpha_{\eps_{i+1}}^{-1}$
and $c_j$ is represented by
$\alpha_{\delta_j}\beta^{\delta_j}_j\gamma_j^1(\beta^{\delta_{j+1}}_{j+1})^{-1}\alpha_{\delta_{j+1}}^{-1}$.
The composition of all $\gamma_i^0$ is a pullback of $\gamma$ but it is not a loop; it is only a ``half'' of a loop.
The other half is the composition of all $\gamma_i^1$, which is also the other pullback of $\gamma$.
It follows that oriented edges of $T^*$ coming out of $x_0$ correspond precisely to non-identity elements $b_i$ or
  non-identity elements $c_j$.
This means that $w(0,v)=w(1,v)$ is the vertex word for $x_0$.

To conclude the proof, we observe that any vertex of $T^*$ is mapped to a vertex of $T$.
Thus any vertex of $T^*$ can be obtained as described above.
If $w(\eps,v)$ is empty, then obviously, the corresponding point $x_\eps$ of $f^{-1}(T)$ does not belong to $T^*$.
If 
$w(\eps,v)$ has the form $a\cdot a^{-1}$, then $x_\eps$ belongs to an edge of $T^*$ corresponding to $a$
(thus, in particular, $x_\eps$ is not a vertex of $T^*$).
Conversely, if $x_\eps$ is not a vertex of $T^*$, then this may be due to one of the following reasons.
Firstly, we may have $x_\eps\notin T^*$, then $w(\eps,v)$ is empty.
Secondly, $x_\eps$ may belong to some edge of $T^*$.
In this case, $w(\eps,v)$ must have the form $a\cdot a^{-1}$, where $a\in P\Ec$ corresponds to this edge.
\end{proof}

Proposition \ref{p:vs} concludes the proof of Theorem \ref{t:ivy-iso}.

\section{Examples of the ivy iteration}
In this section, we describe some particular computations of the ivy graphs made according to the ivy iteration.
We consider only the simplest examples, for which other, sometimes more efficient, computational approaches to distinguishing Thurston equivalence classes are available.
In particular, in most examples, particular invariant spanning trees are known.
We find (conjecturally) all periodic spanning trees in these examples.
Also, we find some pullback invariant sets of ivy objects and show their combinatorial structure.

In \cite{KL18}, all non-Euclidean Thurston maps with at most 4 post-critical points are classified, and
 an algorithm is suggested for solving the twisting problem for such maps.
However, invariant spanning trees for rational maps from \cite{KL18} are not immediate from the provided description.

More complicated examples will be worked out in a separate publication.

\subsection{The basilica polynomial}
\label{ss:bas}
Let us go back to Example \ref{ex:bas}.
This example deals with the basilica polynomial $f(z)=z^2-1$.
We started with an invariant spanning tree
$$
\xymatrix{
{\stackrel{-1}{\bullet}} \ar@{->}[r]^A &{\stackrel{0}{\circ}} \ar@{->}[r]^B &{\stackrel{\infty}{\bullet}}
}
$$
and deduced the corresponding presentation of the biset $\Xc_f(y)$, see Figure \ref{fig:cl-bas}.
This is enough to start the combinatorial ivy iteration.
This process leads to a pullback invariant subset of $\ivy_c(f)$ consisting of 3 combinatorial ivy objects.
Two of these objects correspond to invariant spanning trees.

The two (up to homotopy) invariant spanning trees of $f$ are easy to describe.
The first one is the Hubbard tree connected to infinity as described in Section \ref{ex:quadpoly}.
The second one is a spider in the sense of Hubbard--Schleicher \cite{HS94}.
There is only one remaining ivy object in the given pullback invariant subset of $\ivy(f)$.
It does not correspond to an invariant spanning tree for $f$.

\begin{figure}
  \centering
  \includegraphics[width=12cm]{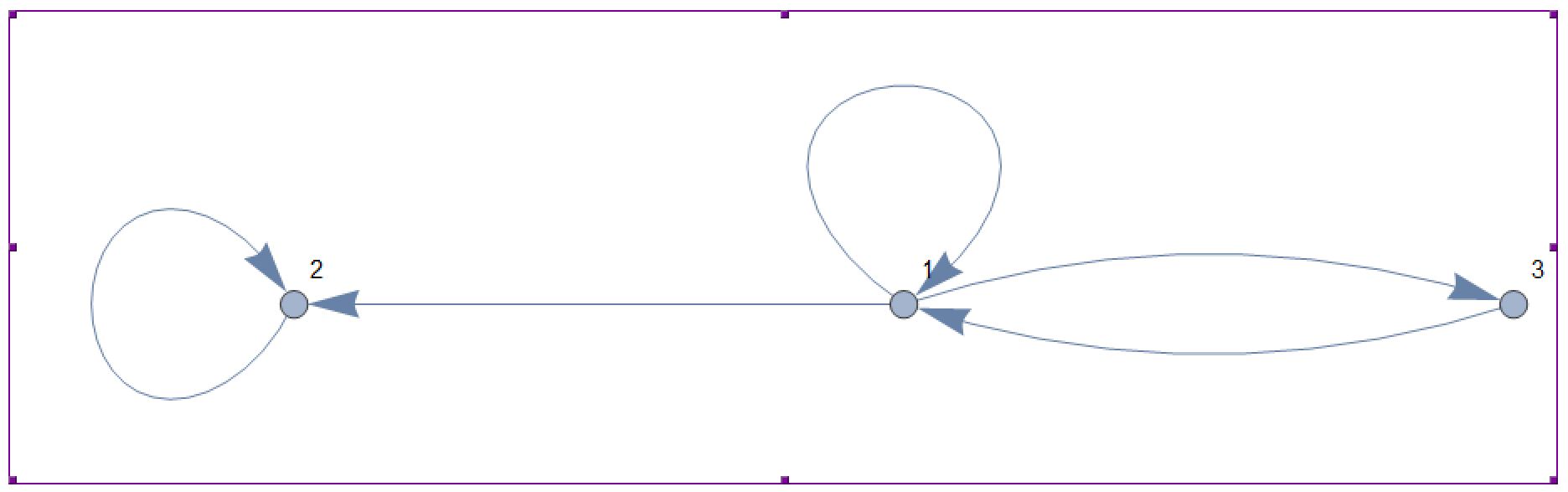}
  \caption{\footnotesize A pullback invariant subset of 3 elements in $\ivy(f)$, where $f(z)=z^2-1$ is the basilica polynomial.
  Arrows represent the pullback relation.
  Vertex 1 corresponds to the tree $T$.
  Vertex 2 corresponds to an invariant spider for $f$.
  Vertex 3 corresponds to a spanning tree that is not invariant up to homotopy;
  it is periodic of period 2.
  Note that vertex 1 has periods 1 and 2.}
  \label{fig:cl-bas}
\end{figure}

Note that $\ivy(f)$ consists of 5 objects.
These objects are the same for all $f$ with a given post-critical set $P(f)$ of 3 elements.
However, the pullback relations (in particular, pullback invariant subsets) defined by $f$ are different.
Three of the elements of $\ivy(f)$ correspond to the unions of two sides of the triangle with vertices in $P(f)$.
The remaining two elements are stars with endpoints in $P(f)$.
The two stars differ by the cyclic order of edges at the only branch point.
Recall that Teichm\"uller theory provides powerful invariants of Thurston equivalence classes in form of certain
 spaces, groups, correspondences between spaces, and virtual homomorphisms between groups.
However, all these invariants are trivial in the case $|P(f)|=3$.

\subsection{The rabbit polynomial}
\label{ss:rab}
The rabbit polynomial $p(z)=z^2+c$ is such that $0$ is periodic of period 3, and $\Im(c)>0$.
These conditions determine $c$ uniquely.
Indeed, the period 3 assumption leads to a cubic equation on $c$, which has one real and two complex conjugate roots.
An invariant spanning tree $T$ for $f$ constructed as in Section \ref{ex:quadpoly} looks as follows.
$$
\xymatrix{
 & {\stackrel{v}{\bullet}} \ar@{<-}^{B}[d] & & \\
{\stackrel{w}{\circ}} \ar@{<-}^{C}[r] & {\stackrel{x_\alpha}{\circ}} \ar@{->}^{A}[dr]& &\\
 & &{\stackrel{0}{\circ}} \ar@{->}^{D}[r] & {\stackrel{\infty}{\bullet}}
}
$$

The choice of the basepoint $y$ for the fundamental group $\pi_{f}=\pi_1(\S^2-P(f),y)$ is irrelevant.
Indeed, the complement of $T$ is simply connected.
The elements of $\pi_f$ associated with the edges of $T$ will be denoted by $a$, $b$, $c$, $d$,
 so that a small letter denotes $g_e$, where $e$ is the edge denoted by the corresponding capital letter.
Thus $\Ec_T$ consists of $id$, $a$, $b$, $c$, and their inverses.
The edges of $T$ map forward as follows:
$$
A\to B,\quad B\to C,\quad C\to A,\quad D\to B^{-1}AD.
$$

Let us compute the map $\Sigma:\{0,1\}\times\Ec_T\to \Ec_T\times\{0,1\}$ from Theorem \ref{t:ThmB-hom}.
We choose $D$ as the base edge.
Then we have the following labels:
$$
\ell(A)=\ell(B)=\ell(C)=1,\quad \ell(D)=0.
$$
The two pseudoaccesses of $T$ at the critical values $v_1=v$ and $v_2=\infty$ are unique.
We have
$$
S^0(T)=(B^{-1},A,D),\quad S^1(T)=(D^{-1},A^{-1},C,C^{-1},B).
$$
By Definition \ref{d:signature}, the edges $A$ and $D$ have signature $(0,1)$, the edge $B$ has signature $(1,0)$,
the edge $C$ has signature $(1,1)$.

By Theorem \ref{t:ThmB-hom}, a presentation for $\Xc_f(y_0)$ looks as follows

\medskip

\bgroup
\def\arraystretch{1.5}
\centerline
{
\begin{tabular}{|c|c|c|c|c|c|c|c|c|}
 \hline
 & $a$& $b$ & $c$& $d$& $a^{-1}$& $b^{-1}$& $c^{-1}$& $d^{-1}$\\
 \hline
0 & $d\,1$ & $a\,1$ & $b\,0$& $d\,1$& $c^{-1}1$& $d\,1$& $b^{-1}0$& $1$\\
\hline
1 & $c\,0$ & $d^{-1}0$ & $1$ & $0$ & $d^{-1}0$ & $a^{-1}0$ & $1$ & $d^{-1}0$ \\
\hline
\end{tabular}
}
\egroup

\medskip

Note that $a=b^{-1}c^{-1}$, so that it is enough to use only $b$, $c$ and $d$ as generators of $\pi_f$.
We will identify $\pi_f$ with the free group generated by $b$, $c$, and $d$.
Then the tree structure on $\Ec_T$ consists of $b^{-1}c^{-1}\cdot c\cdot b$, $cb\cdot d$, $b^{-1}$, $c^{-1}$, and $d^{-1}$.
Using the combinatorial ivy iteration, we found a pullback invariant subset of $\ivy(p)$ consisting of 10 ivy objects,
 see Figure \ref{fig:cl-rab}.
This is the pullback invariant subset containing the class of the tree $T$.

\begin{figure}
  \centering
  \includegraphics[width=10cm]{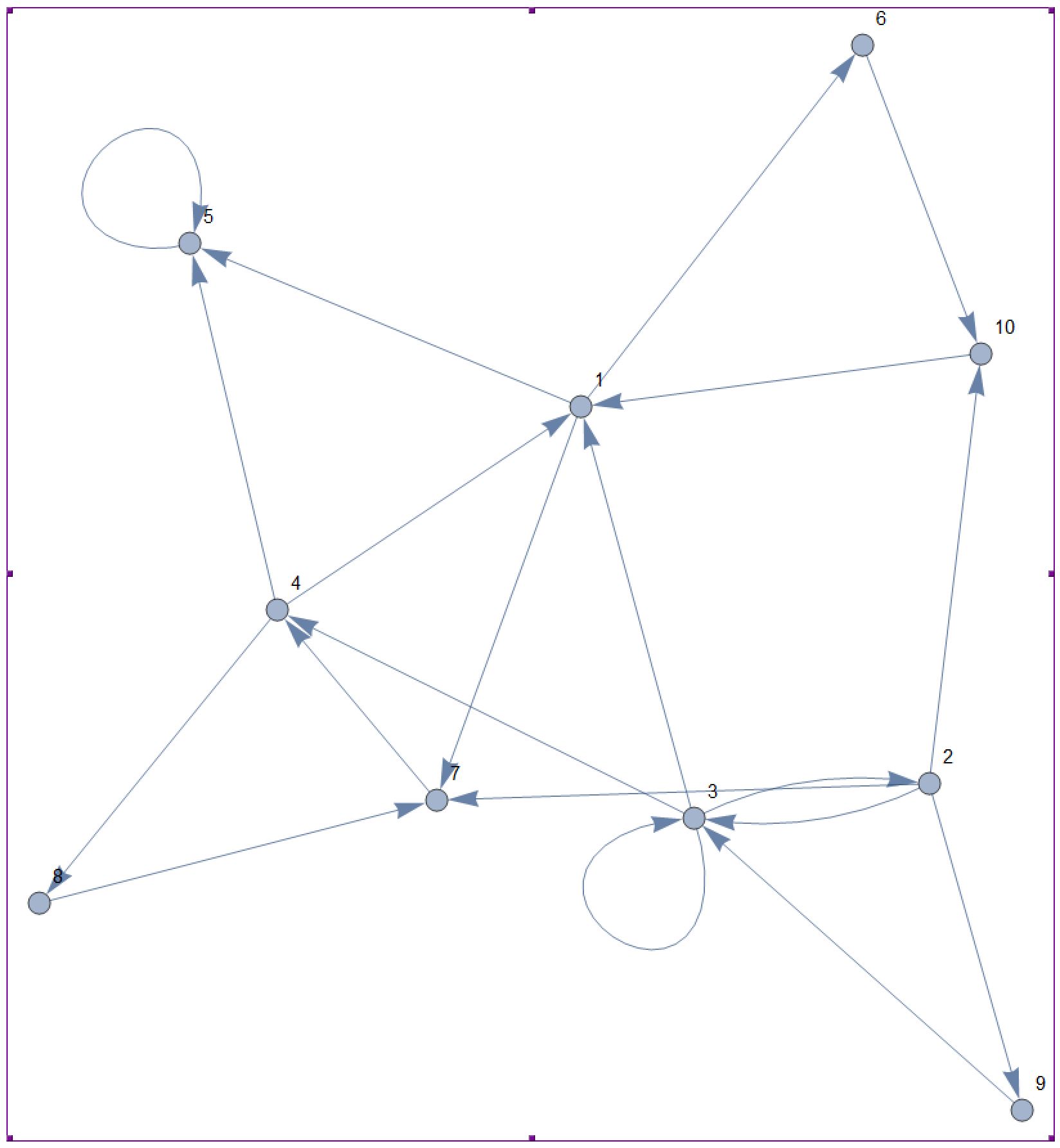}
  \caption{\footnotesize The pullback invariant subset of $\ivy(f)$ containing $[T]$, where $f$ is the rabbit polynomial,
  and $T$ is the invariant spanning tree for $f$ obtained by connecting the Hubbard tree to $\infty$.
  This subset consists of 10 elements.
  Vertex 5 represents an invariant spider, and vertex 3 represents $T$.
  }
  \label{fig:cl-rab}
\end{figure}

We see that, similarly to the basilica, there are two invariant spanning trees for $f$, up to homotopy,
 among the trees representing objects in the found pullback invariant subset.
One tree corresponds to the Hubbard tree connected to $\infty$.
The other tree is an invariant spider.

\subsection{Simple capture of the basilica at $\sqrt{2}$}
\label{ss:cbsqrt2}
Let $p(z)=z^2-1$ be the basilica polynomial.
The point $\sqrt 2$ is preperiodic under $p$ of preperiod 2: it maps to $1$, and $1$ maps to $-1$.
It follows that the simple capture $f$ of $p$ at $\sqrt 2$ has the following invariant spanning tree $T$:
$$
\xymatrix{
{\stackrel{-1}{\bullet}} \ar@{->}[r]^{A} &
{\stackrel{0}{\circ}} \ar@{->}[r]^{B} &
{\stackrel{1}{\circ}} \ar@{->}[r]^{C} &
{\stackrel{\sqrt{2}}{\bullet}}
}
$$
We oriented the edges of $T$ from left to right, and labeled them $A$, $B$, $C$.
The corresponding symmetric generating set of $\pi_f$ is $\Ec_T=\{1,a^{\pm 1},b^{\pm 1},c^{\pm 1}\}$,
  where $a=g_A$, $b=g_B$, $c=g_C$.

\begin{figure}
  \centering
  \includegraphics[width=12cm]{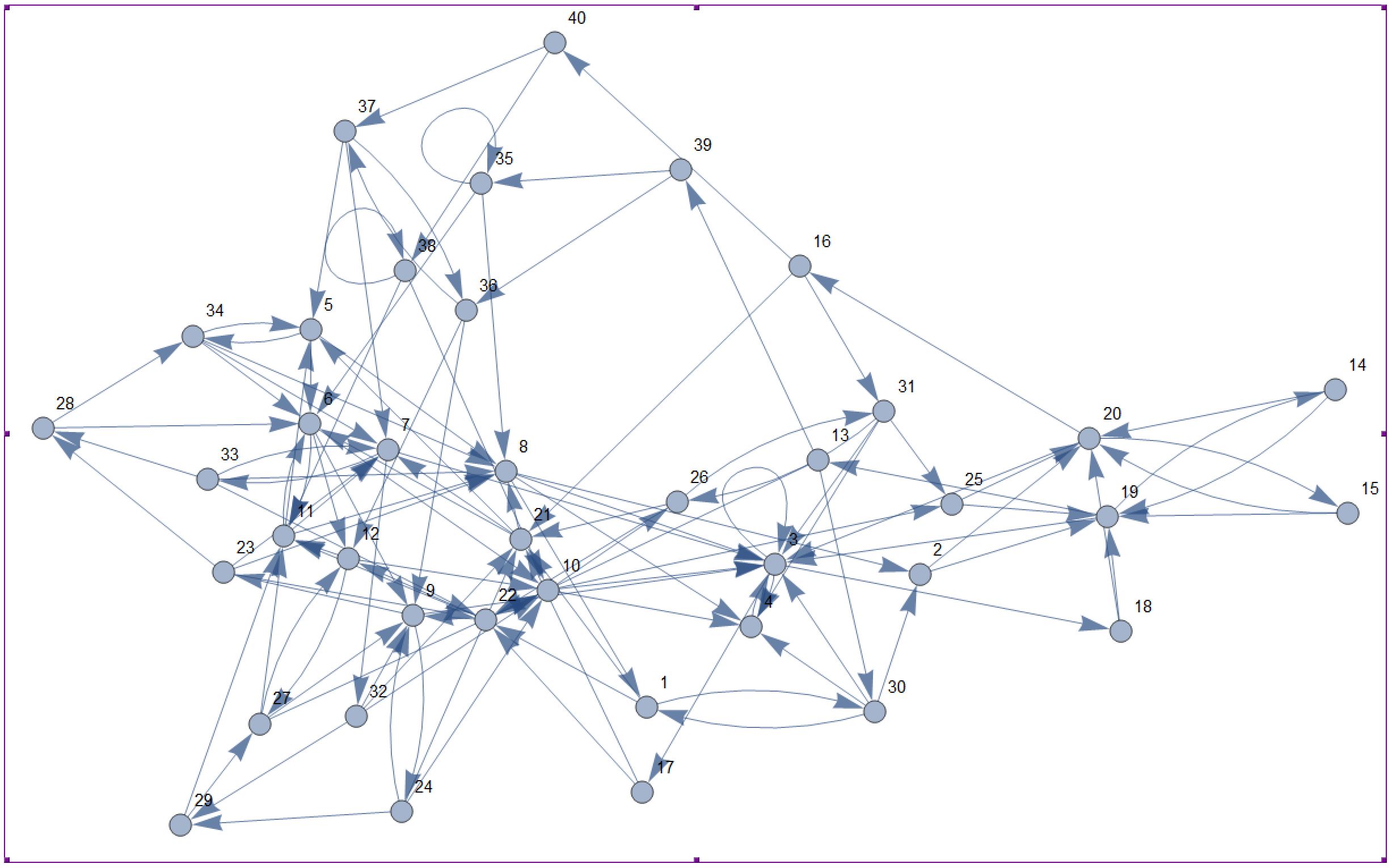}
  \caption{\footnotesize The pullback invariant subset of 40 elements in $\ivy(f)$ containing $[T]$.
  Here $f$ is a simple capture of the basilica at $\sqrt{2}$, and $T$ is the invariant spanning tree for $f$
  introduced in Section \ref{ss:cbsqrt2}.
  Vertices 3, 35 and 38 represent invariant spanning trees for $f$.
  }
  \label{fig:cl-cb}
\end{figure}

Since $-1$ and $\sqrt{2}$ are incident each to a unique edge of $T$, there are unique pseudoaccesses at $-1$ and $\sqrt 2$.
As the base edge of $T$, we take $C$.
Note that $0$ is the only critical point in $T$, thus it separates edges of different labels.
We may assume that
$$
\ell(A)=0,\quad \ell(C)=\ell(B)=1
$$
although the opposite assignment of labels is also possible.
In fact, there are two edges of $G=f^{-1}(T)$ mapping onto $C$.
They are separated by a critical point mapping to $\sqrt 2$.
One or the other assignment of labels depends on which of the two edges is chosen as $e'_b$.
The latter, in turn, depends on which complementary component of $G$ contains the point $y$.
The oriented edges $A$, $B$, $C$ have signature $(0,1)$.
The opposite oriented edges $A^{-1}$, $B^{-1}$, $C^{-1}$ have signature $(1,0)$.

By Theorem \ref{t:ThmB-hom}, the biset of $f$ is represented as follows:
\medskip

\bgroup
\def\arraystretch{1.5}
\centerline{
\begin{tabular}{|c|c|c|c|c|c|c|}
 \hline
 & $a$& $b$ & $c$& $a^{-1}$& $b^{-1}$& $c^{-1}$\\
 \hline
0& $a^{-1}1$ & $1$ & $1$ & $b^{-1}1$& $c^{-1}1$& $1$\\
\hline
1& $b\,0$ & $c\,0$ & $0$ & $a\,0$ & $0$ & $0$\\
\hline
\end{tabular}
}
\egroup

\medskip

With the help of the ivy iteration, we found the pullback invariant subset of $\ivy(f)$ of order 40 containing $[T]$.
Within this subset, there are three invariant spanning trees for $f$, up to homotopy, see Figure \ref{fig:cl-cb}.
Vertex 3 corresponds to the invariant spanning tree
$$
\xymatrix{
{\stackrel{-1}{\bullet}} \ar@{-}[r] &
{\stackrel{0}{\circ}} \ar@{-}[r] &
{\stackrel{1}{\circ}} \ar@{-}[r] &
{\stackrel{\sqrt{2}}{\bullet}}
}
$$
Vertex 35 corresponds to the invariant spanning tree
$$
\xymatrix{
{\stackrel{-1}{\bullet}} \ar@{-}[dr] &
{\stackrel{0}{\circ}} \ar@{-}[d] &
{\stackrel{1}{\circ}} \ar@{-}[dl] &
{\stackrel{\sqrt{2}}{\bullet}} \ar@{-}[dll]\\
& {\circ} & &
}
$$
Finally, vertex 38 corresponds to the invariant spanning tree
$$
\xymatrix{
& {\circ} & &\\
{\stackrel{-1}{\bullet}} \ar@{-}[ur] &
{\stackrel{0}{\circ}} \ar@{-}[u] &
{\stackrel{1}{\circ}} \ar@{-}[ul] &
{\stackrel{\sqrt{2}}{\bullet}} \ar@{-}[ull]
}
$$

\subsection{A capture of the Chebyshev polynomial}
Finally, we consider an example, where an invariant spanning tree is not known a priori.
Namely, we take a simple capture of the Chebyshev polynomial $p(z)=z^2-2$ whose post-critical set has cardinality $4$.
There are two preimages of $0$ under $p$, namely, $\pm\sqrt 2$.
We restrict our attention to a simple capture of $p$ at $\sqrt{2}$.
There are two simple captures of $p$ at $\sqrt{2}$ corresponding to capture paths $\beta_u$ and $\beta_d$
(``u'' and ``d'' are from ``\textbf{u}p'' and ``\textbf{d}own'').
We may define $\beta_u$ as a path along the external ray of argument $\frac 18$, and
  $\beta_d$ as a path along the external ray of argument $\frac 78$.
Clearly, any other simple capture path for $p$ ending at $\sqrt{2}$ is homotopic to $\beta_u$ or $\beta_d$
  relative to the set $\{\sqrt{2},0,-2,2\}$ (which is the post-critical set of the captures).
The extended Hubbard tree $T$ with vertices in $\{\sqrt{2},0,-2,2\}$ is the following:
$$
\xymatrix{
{\stackrel{-2}{\bullet}} \ar@{-}[r] &
{\stackrel{0}{\circ}} \ar@{-}[r] &
{\stackrel{\sqrt 2}{\bullet}} \ar@{-}[r] &
{\stackrel{2}{\circ}}
}
$$
(We have marked post-critical points of the capture rather than of $p$.)

Consider $f_u=\sigma_{\beta_u}\circ p$.
To lighten the notation, we will write $\sigma_u$ instead of $\sigma_{\beta_u}$.
The full preimage $G_u=f_u^{-1}(T)$ can be obtained as the full preimage under $p$ of $\sigma_u^{-1}(T)$.
The tree $\sigma_u^{-1}(T)$ can be represented (up to homotopy) as follows.
Comparing $\sigma_u^{-1}(T)$ to $T$: the edge $[-2,0]$ is preserved in $\sigma_u^{-1}(T)$.
The edges $[0,\sqrt{2}]$ and $[\sqrt{2},2]$ are replaced with the external rays of arguments $\frac 14$ and $0$, respectively.
See Figure \ref{fig:tcap}, top right, for an illustration of $\sigma_u^{-1}(T)$.

\begin{figure}
  \centering
  \includegraphics[height=5cm]{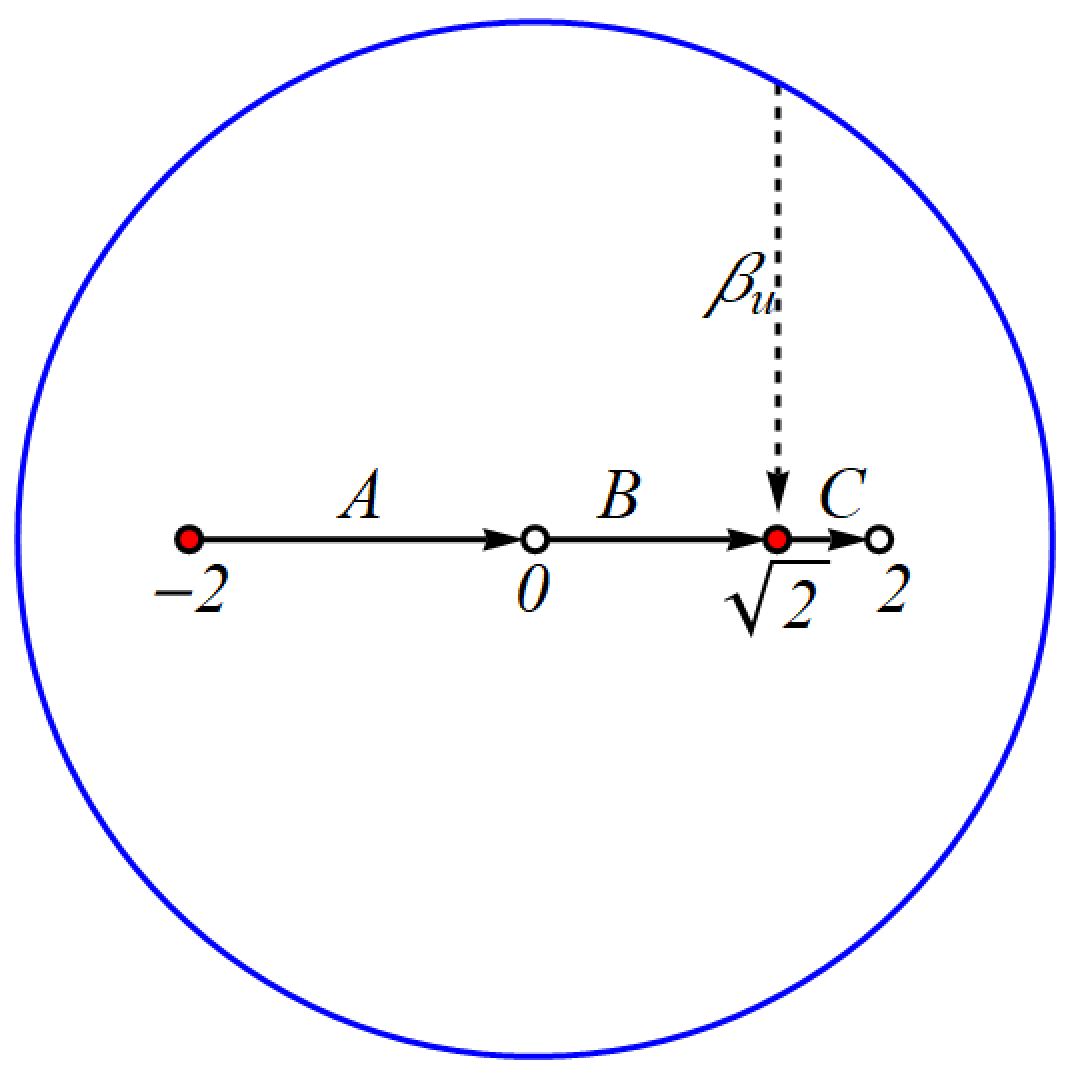}\quad
  \includegraphics[height=5cm]{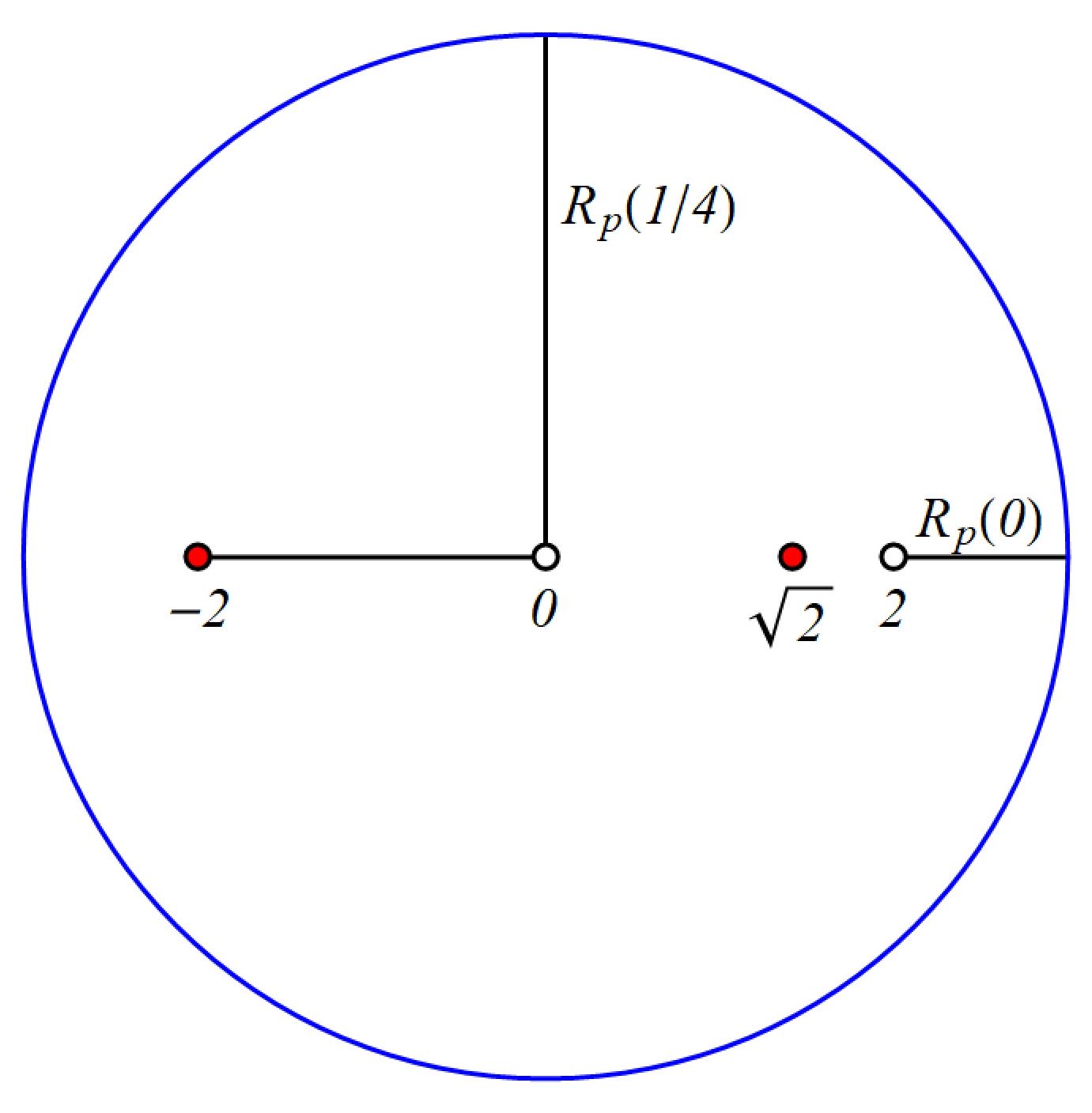}\\
  \includegraphics[height=5cm]{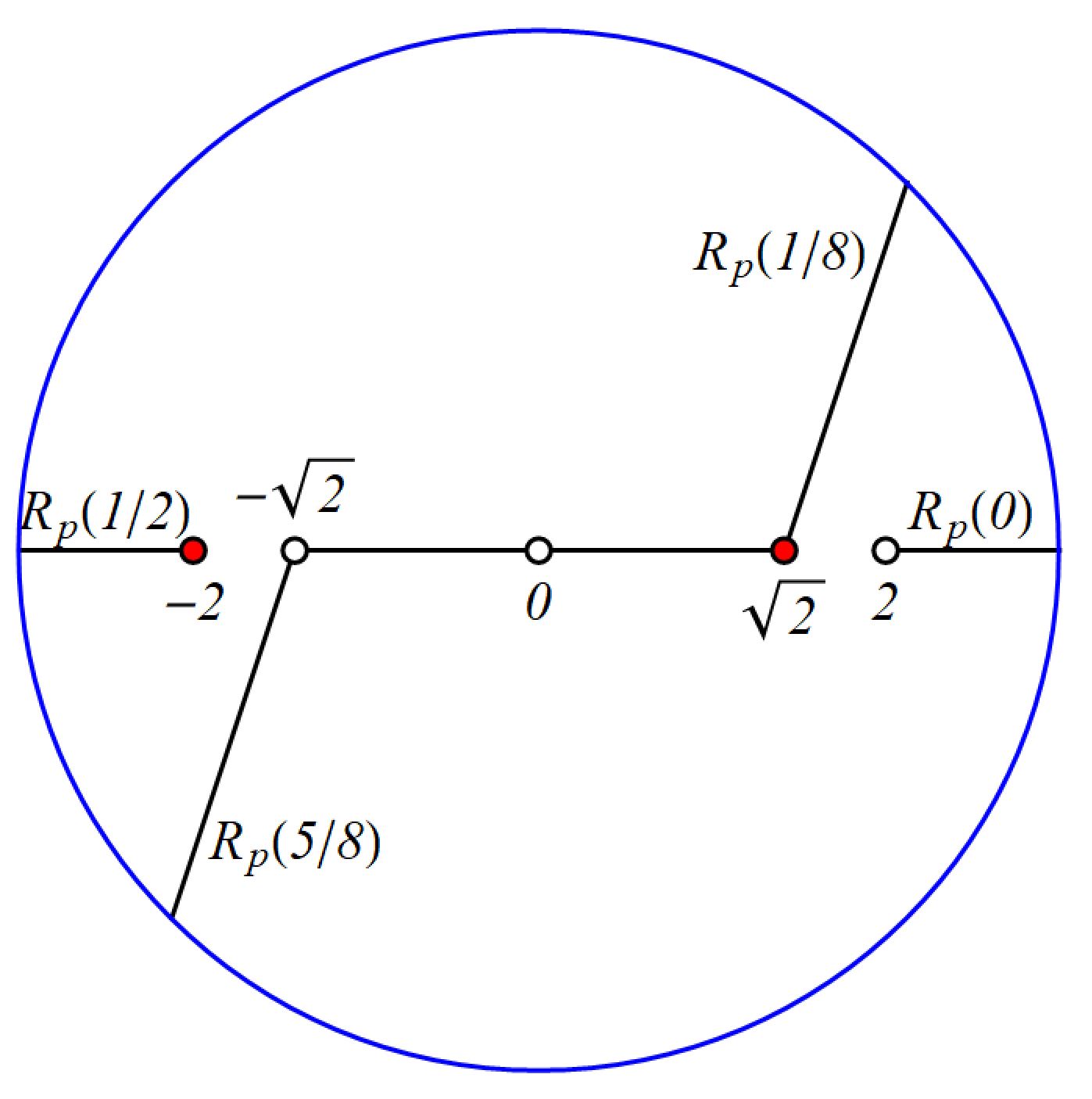}\quad
  \includegraphics[height=5cm]{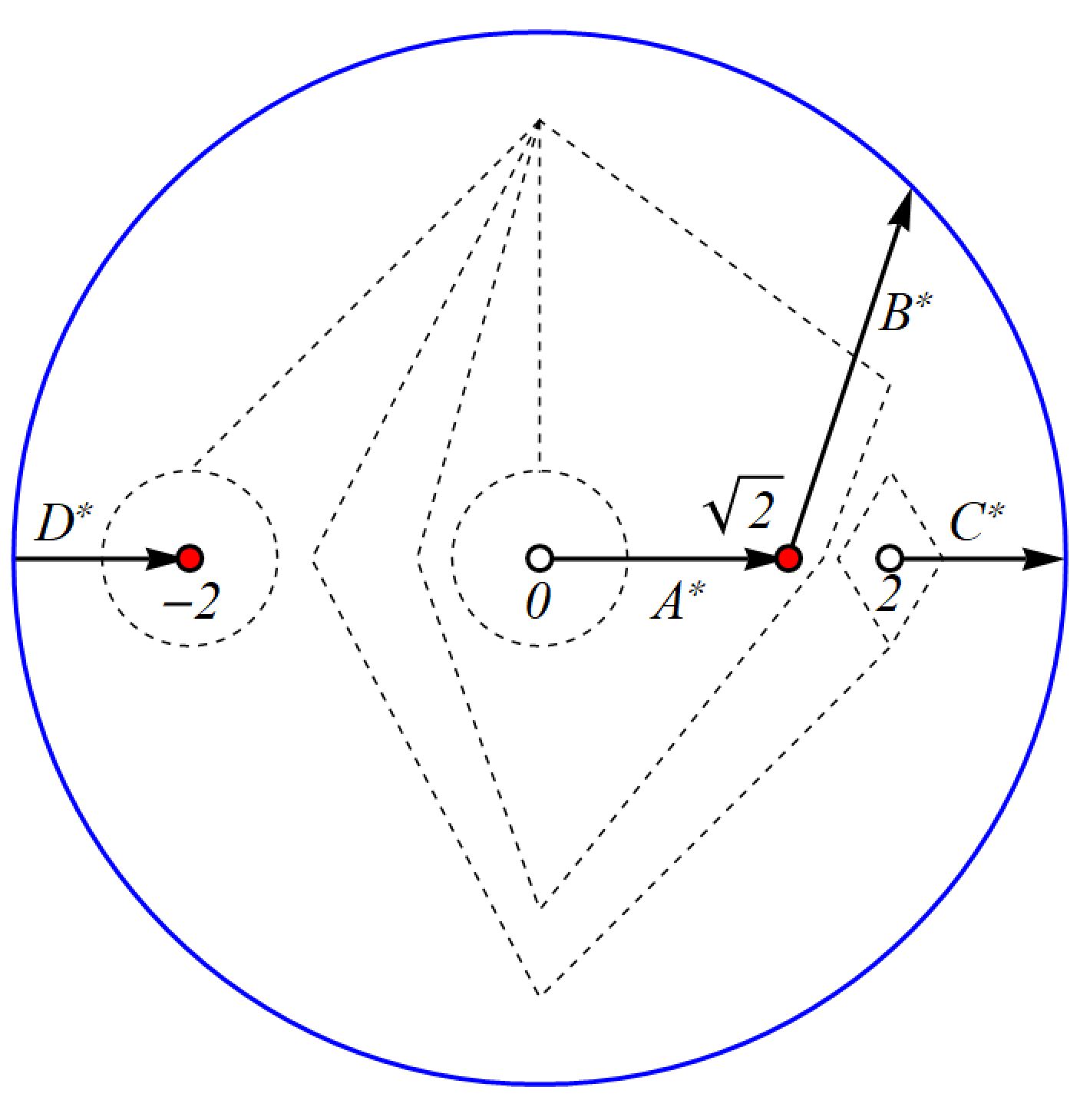}
  \caption{\footnotesize The simple capture $f_u=\sigma_u\circ p$ of $p(z)=z^2-2$.
  Copies of the sphere are represented as disks, in which the boundary circles are assumed to be collapsed.
  Thus, in each of the four pictures, one should think of the entire boundary circle as one point.}
  \label{fig:tcap}
\end{figure}

Next, $G_u$ is obtained as the full $p$-preimage of $\sigma_u^{-1}(T)$, see Figure \ref{fig:tcap}, bottom left.
The graph $G_u$ consists of the line segments $[-\sqrt{2},0]$, $[0,\sqrt{2}]$,
 and the external rays $R_p(0)$, $R_p(\frac 12)$, $R_p(\frac 18)$, $R_p(\frac 58)$.
Here $R_p(\theta)$ stands for the external ray of argument $\theta$ in the dynamical plane of $p$.
We want to find a spanning tree $T^*\subset G_u$ so that to make $(T^*,T)$ into a dynamical tree pair.

Denote the edges of $T$ as $A$, $B$, $C$ and orient them as in Figure \ref{fig:tcap}, top left.
Then $Z=A\cup B$ is a simple arc in $T$ connecting the two critical values
(this is consistent with the meaning of the symbol $Z$ in Sections \ref{ss:baseedge} and \ref{ss:ivy-geom}).
The full preimage $f_u^{-1}(Z)$ is the simple closed curve consisting of the segments $[-\sqrt 2,0]$, $[0,\sqrt 2]$,
 the rays $R_p(\frac 18)$, $R_p(\frac 58)$, and the point $\infty$.
We need to chose an edge $e'_b$ in $f_u^{-1}(Z)$ that will not be included into $T^*$.
Take $e'_b$ to be the edge that goes along the ray $R_p(\frac 58)$.
Then the segment $[-\sqrt 2,0]$ should also be removed from $T^*$ since $-\sqrt{2}$ is not in $P(f_u)$.
Thus $T^*$ is as shown in Figure \ref{fig:tcap}, bottom right.
Denote the edges of $T^*$ by $A^*$, $B^*$, $C^*$, $D^*$ and orient them as in Figure \ref{fig:tcap}, bottom right.
The edges of $T^*$ map over the edges of $T$ as follows:
$$
A^*\to A,\quad B^*\to B,\quad C^*\to C^{-1},\quad D^*\to C.
$$
Thus, in this example, every edge of $T^*$ maps over just one edge of $T$, which is not the case in general.

We will write $a$, $b$, $c$ for the elements of $\pi_{f_u}$
 corresponding to the edges $A$, $B$, $C$, respectively.
Thus the generating set $\Ec=\Ec_T$ consists of $id$, $a^{\pm 1}$, $b^{\pm 1}$, $c^{\pm 1}$.
Similar convention will apply to $T^*$,
 so that $\Ec^*=\Ec_{T^*}$ consists of $id$, $a^*$, $b^*$, $c^*$, $d^*$, and their inverses.
Elements of $\Ec_{T^*}$ are shown through their representatives in Figure \ref{fig:tcap}, bottom right (dashed loops).
Inspecting how the dashed loops cross the edges of $T$, we can express elements of $\Ec^*$ through those of $\Ec$:
$$
a^*=ba^{-1},\quad b^*=ca^{-1},\quad c^*=ac^{-1}a^{-1},\quad d^*=a^{-1}.
$$

\begin{figure}
  \centering
  \includegraphics[width=12cm]{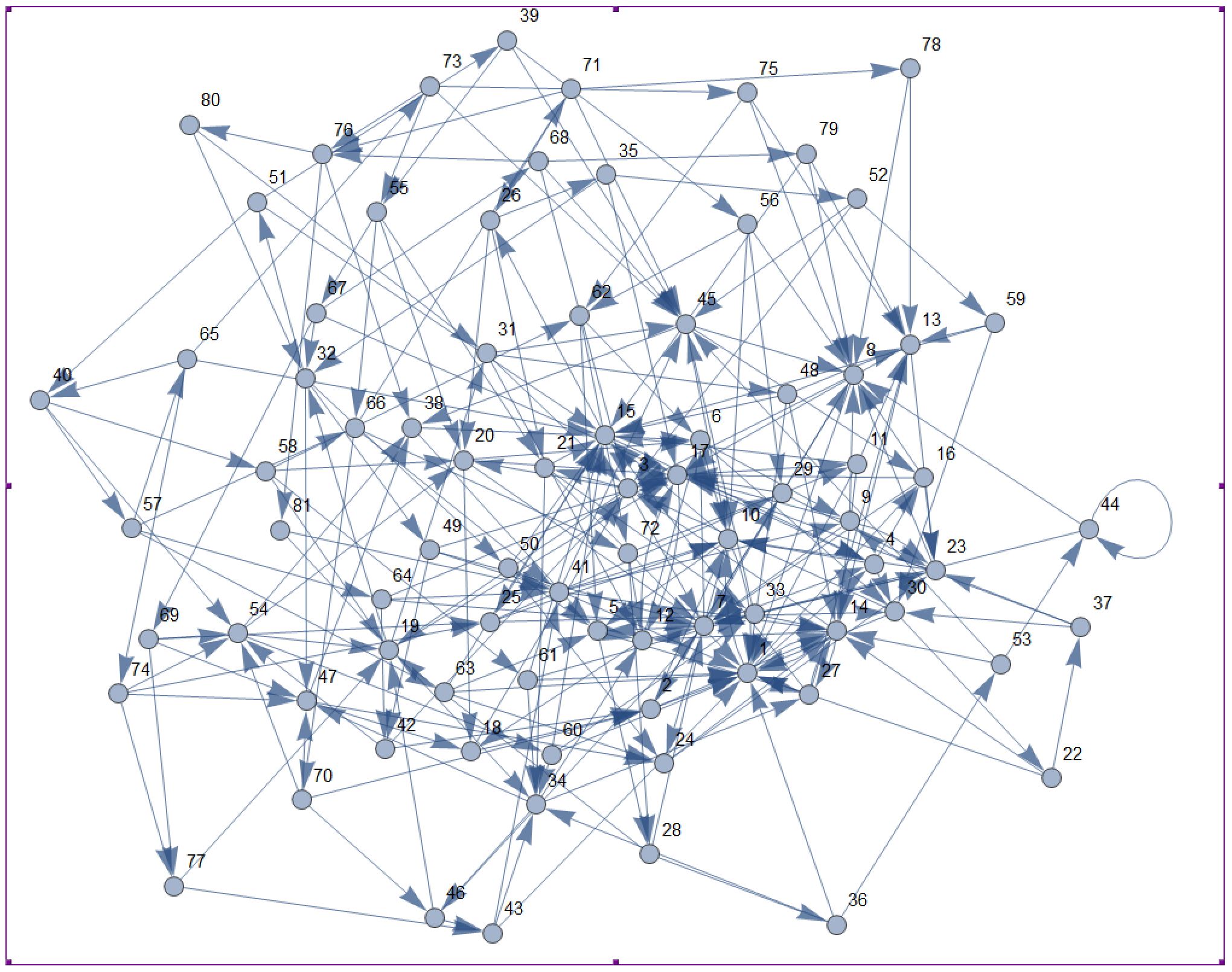}
  \caption{\footnotesize The pullback invariant subset of $\ivy(f_u)$ containing $[T]$.
  There are 81 objects in this subset.
  Vertex 44 represents an invariant ivy object.}
  \label{fig:cl-ccheb}
\end{figure}

We now compute the presentation of the biset of $f_u$ associated with $(T^*,T)$ as in Theorem \ref{t:ThmB-hom}.
To this end, we first need to choose post-critical pseudoaccesses for $T$.
The critical values of $f_u$ are $v_1=-2$ and $v_2=\sqrt 2$.
Since $v_1$ is an endpoint of $T$, there is only one pseudoaccess at $v_1$.
However, $v_2$ admits to pseudoaccesses: one is from above, and the other is from below.
We choose the one from above.
According to this choice of pseudoaccesses, we have $S^0(T)=(A,B)$ and $S^1(T)=(C,C^{-1},B^{-1},A^{-1})$.
Therefore, the edges $A$ and $B$ have signature $(0,1)$, and $C$ has signature $(1,1)$.
Next, we need to compute the labels for all edges of $T^*$.
Since $e'_b$ by definition has label 1, we have $\ell(A^*)=\ell(B^*)=0$
(indeed, we change the label as we pass through the critical point $0$).
It is slightly harder to figure out the labels of $C^*$ and $D^*$ since we need to look at the critical pseudoaccesses at $\infty$.
The pseudoaccesses at $\infty$ separate $B^*$ and $C^*$ from $D^*$.
Therefore, we have $\ell(C^*)=0$ and $\ell(D^*)=1$.

We can now write down the presentation of the biset $\Xc_{f_u}(y)$ associated with $(T^*,T)$:

\medskip

\bgroup
\def\arraystretch{1.5}
\centerline{
\begin{tabular}{|c|c|c|c|c|c|c|}
 \hline
 & $a$& $b$ & $c$& $a^{-1}$& $b^{-1}$& $c^{-1}$\\
 \hline
0& $a^*1$ & $b^*1$ & $d^*0$ & $1$& $1$& ${d^*}^{-1}0$\\
\hline
1& $0$ & $0$ & ${c^*}^{-1}1$ & ${a^*}^{-1}0$ & ${b^*}^{-1}0$ & $c^*1$\\
\hline
\end{tabular}
}
\egroup

\medskip

With the help of a computer, we found a pullback invariant subset of size 81 in $\ivy(f)$ containing $[T]$,
 see Figure \ref{fig:cl-ccheb}.
This subset contains an invariant ivy object.
Thus, we found an invariant (up to homotopy) spanning tree for $f$.
This invariant tree is a star.

\subsection{Some open questions}
The following are open questions about the pullback relation on spanning trees that seem important:
\begin{itemize}
  \item For a quadratic rational Thurston map $f$, can there be an infinite sequence of pairwise different ivy objects $[T_n]$ such that $[T_{n}]\multimap [T_{n+1}]$?
  \item Is there a uniform upper bound on the number of invariant spanning trees, up to isotopy, for a quadratic rational Thurston map?
  \item Can there be two disjoint pullback invariant subsets of $\ivy(f)$,
   for a quadratic rational Thurston map $f$?
\end{itemize}

\subsection{Acknowledgements}
The authors are grateful to D. Dudko and M. Hlush\-chan\-ka for useful discussions,
to D. Schleicher and Jacobs University Bremen for hospitality and inspiring working conditions during the workshop ``Dynamics, Geometry and Groups'' in May 2017, where these and other enlightening discussions took place.
We are also grateful to the referee for valuable remarks and suggestions.


\begin{thebibliography}{999999}

\bibitem[BD17]{BD17}
L. Bartholdi, D. Dudko, \emph{Algorithmic aspects of branched coverings}, Ann. Fac. Sci. Toulouse \textbf{5} (2017), 1219--1296.

\bibitem[BN06]{BN06}
L. Bartholdi, V. Nekrashevych, \emph{Thurston equivalence of topological polynomials},
Acta Math. \textbf{197} (2006), 1--51

\bibitem[BFH92]{BBH92}
B. Bielefeld, Y. Fisher, J. Hubbard, \emph{The classification of critically preperiodic polynomials as dynamical systems}, J. of the AMS, \textbf{5}:4 (1992), 721--762.

\bibitem[BM17]{BM17}
M. Bonk, D. Meyer, \emph{Expanding Thurston maps}, AMS Mathematical Surveys and Monographs \textbf{225} (2017).

\bibitem[CFP01]{CFP01}
J. Cannon, W. Floyd, W. Parry, \emph{Finite subdivision rules}, Conformal Geometry and Dynamics, \textbf{5} (8) (2001), 153--196.

\bibitem[CG+15]{CGNPP15}
K. Cordwell, S. Gilbertson, N. Nuechterlein, K. Pilgrim, S. Pinella,
\emph{The classification of critically fixed rational maps}, Conform. Geom. Dyn. \textbf{19} (2015), 51--94.

\bibitem[FM12]{FM12}
B. Farb, D. Margalit,
\emph{A Primer on Mapping Class Groups}, Princeton Univ. Press, 2012.

\bibitem[DH85]{hubbdoua85} A. Douady, J. H. Hubbard, \emph{\'Etude
    dynamique des polyn\^omes complexes} I, II, Publications
    Math\'ematiques d'Orsay \textbf{84-02} (1984), \textbf{85-04}
    (1985).

\bibitem[Hlu17]{H17}
M. Hlushchanka, \emph{Invariant graphs, tilings, and iterated monodromy groups}, Ph.D. Thesis, Jacobs University Bremen (2017).

\bibitem[HS94]{HS94}
J. Hubbard, D. Schleicher, \emph{The spider algorithm}, in: Proc. Sympos. Appl. Math \textbf{49}, AMS (1994), 155--180.

\bibitem[Kam01]{Kam01}
A. Kameyama, \emph{The Thurston equivalence for postcritically finite branched coverings},
Osaka J. Math. \textbf{38} (2001), no. 3, 565--610.

\bibitem[KL18]{KL18}
G. Kelsey, R. Lodge, \emph{Quadratic Thurston maps with few postcritical points}, Geom. Dedicata (2018).
\hfill \linebreak
\verb!https://doi.org/10.1007/s10711-018-0387-5!

\bibitem[KPS16]{KPS16}
S. Koch, K. Pilgrim, N. Selinger,
\emph{Pullback invariants of Thurston maps}. Trans. Amer. Math. Soc. \textbf{368} (2016), no. 7, 4621--4655.

\bibitem[LMS15]{LMS15}
R. Lodge, Y. Mikulich, D. Schleicher, \emph{A classification of postcritically finite Newton maps}.
\verb!https://arxiv.org/abs/1510.02771!

\bibitem[MA41]{MA41}
S. MacLane, V. W. Adkisson, \emph{Extensions of homeomorphisms on the sphere}.
Lectures in Topology, University of Michigan Press, Ann Arbor, Mich. 1941, 223--236.

\bibitem[Mil09]{M} J. Milnor,
\emph{Cubic polynomial maps with periodic critical orbit I}, in:
``Complex Dynamics: Families and Friends'', ed. D. Schleicher, A.K.
Peters (2009), 333--411.

\bibitem[Nek05]{Nek05} V. Nekrashevych, \emph{Self-similar groups}, Mathematical Surveys and Monographs,
Volume: \textbf{117} (2005), AMS.

\bibitem[Nek09]{Nek09}
V. Nekrashevych, \emph{Combinatorics of polynomial iterations}, in:
``Complex Dynamics: Families and Friends'', ed. D. Schleicher, A.K. Peters, Wellesley, MA (2009),
169--214.

\bibitem[Pil03]{Pil03} K. Pilgrim,
\emph{An algebraic formulation of Thurston’s combinatorial equivalence},
Proc. Amer. Math. Soc. \textbf{131} (2003), no. 11, 3527--3534.

\bibitem[Poi93]{poi93} A. Poirier, \emph{On postcritically finite polynomials, part 2:
Hubbard trees},
Stony Brook IMS preprint \textbf{93/7} (1993).
\textsf{arXiv:math/9307235}

\bibitem[Poi10]{poi10} A. Poirier, \emph{Hubbard trees}, Fundamenta Mathematicae \textbf{208}(3) (2010), 193--248.

\bibitem[Ree92]{Rees_description}
M. Rees, \emph{A partial description of the Parameter Space of Rational Maps of Degree Two: Part 1}
Acta Math. \textbf{168} (1992), 11--87

\bibitem[Ree10]{ReesV3}
M. Rees, \emph{A Fundamental Domain for $V_3$},  M\'em. Soc. Math. Fr. (N.S) (2010).

\bibitem[Wit88]{Wittner}
B. Wittner, ``On the bifurcation loci of rational maps of degree two'',
PhD Thesis, Cornell University, 1988
\end{thebibliography}
\end{document}